\newtheorem{theorem}{Theorem}[section]
\newtheorem{lemma}{Lemma}[section]
\newtheorem{corollary}{Corollary}[section]
\newtheorem{example}[theorem]{Example}
\theoremstyle{definition}
\newtheorem{definition}{Definition}[section]
\newtheorem{proposition}[theorem]{Proposition}
\theoremstyle{remark}
\newtheorem{remark}[theorem]{Remark}
\def\ds{\displaystyle}
\def\ZZ{\mathbb{Z}}
\def\NN{\mathbb{N}}
\def\CC{\mathbb{C}}
\def\DD{\mathbb{D}}
\def\DDD{\bf{D}}
\def\inter{{\rm int}}
\numberwithin{equation}{section}
\def\acc{{\rm acc}}
\def\iso{{\rm iso}}
\def\ind{{\rm ind}}
\def\snoi{\smallskip\noindent}
\def\NN{{\mathbb N}}
\def\ZZ{{\mathbb Z}}
\def\CC{{\mathbb C}}
\def\DD{\mathbb{D}}
\def\W{\mathcal{W}}
\def\B{\mathcal{B}}
\def\M{\mathcal{M}}
\def\Q{\mathcal{Q}}
\def\D{\mathcal{D}}
\def\S{\mathcal{S}}
\begin{document}
\date{}
\title{Generalized Kato-Riesz decomposition}
\author{Sne\v zana \v C. \v Zivkovi\'c-Zlatanovi\'c, Milo\v s D. Cvetkovi\'c\footnote{The authors are
supported by the Ministry of Education, Science and Technological
Development, Republic of Serbia, grant no. 174007.}}

\date{}
\maketitle

\setcounter{page}{1}

\begin{abstract}
We shall say that a bounded linear operator $T$ acting on a Banach space $X$ admits a generalized  Kato-Riesz
decomposition if there exists a pair of $T$-invariant closed subspaces $(M,N)$ such that $X=M\oplus N$, the reduction  $T_M$ is Kato and $T_N$ is Riesz. In this paper we define and investigate the generalized Kato-Riesz spectrum of an operator. For $T$ is said to be    generalized Drazin-Riesz
invertible if there exists a bounded linear operator $S $ acting on $X$ such that
$TS=ST$, $STS=S$, $ TST-T$ is Riesz.
 We  investigate generalized Drazin-Riesz
invertible operators and
also,  characterize bounded linear operators  which can be expressed as a direct sum of a Riesz operator and a bounded below (resp.  surjective, upper (lower) semi-Fredholm, Fredholm,  upper (lower)
semi-Weyl, Weyl) operator. In particular we characterize the single-valued extension property at  a point $\lambda_0\in{\mathbb C}$  in the case that $\lambda_0-T$ admits  a generalized  Kato-Riesz decomposition.
\end{abstract}

2010 {\it Mathematics subject classification\/}:  47A53, 47A10.

{\it Key words and phrases\/}:  Banach space; Kato operators; Riesz operators; 
single valued extension property;
approximate point (surjective)  spectrum; essential spectra.

\section{Introduction and preliminaries}


Let $\mathbb{N} \, (\mathbb{N}_0)$ denote the set of all positive
(non-negative) integers, and let $\mathbb{C}$ denote the set of all
complex numbers. If $K \subset \mathbb{C}$, then $\partial K$ is the
boundary of $K$, $\acc \, K$ is the set of accumulation points of
$K$, $\iso \, K=K \setminus \acc \, K$ and $\inter\, K$ is the set of interior points of $K$.
For $\lambda_0\in \CC$, the open disc, centered  at $\lambda_0$  with radius $\epsilon$ in $\CC$, is denoted  by $D(\lambda_0,\epsilon)$.

 Let $L(X)$ be the Banach algebra of bounded linear operators acting on  an infinite dimensional complex  Banach space $X$.  The group of all
invertible operators is denoted by $L(X)^{-1}$.
  For $T\in L(X)$, let $\sigma(T)$ denote the spectrum of $T$, $r(T)$ the spectral radius of $T$  and  $\rho(T)=\CC\setminus\sigma(T)$  the resolvent set of $T$. Let   $N(T)$ denote the null-space and $R(T)$ the range of $T$. Set $ \alpha (T) = \text{dim}
N(T)$  and  $\beta (T)=\text{codim} R(T)$. An operator $T \in L(X)$ is {\em bounded
below} if $T$ is injective and has closed range. Let $\M(X)$ denote the set of all
bounded below operators, and let $\Q(X)$ denote the set of all
surjective operators.
   The approximate point spectrum of $T\in L(X)$
is defined by
$
\sigma_{ap}(T)=\{\lambda\in\CC: T-\lambda\ {\rm is\ not\ bounded\
below}\}
$
and the surjective spectrum is defined by
$
\sigma_{su}(T)=\{\lambda\in\CC: T-\lambda\ {\rm is\ not\
surjective}\}.
$ We set $\rho_{ap}(T)=\CC\setminus \sigma_{ap}(T)$
and $\rho_{su}(T)=\CC\setminus \sigma_{su}(T)$.
  An operator $T \in
L(X)$ is {\em Kato} if $R(T)$ is closed and $N(T) \subset
R(T^n)$ for every $ n \in \mathbb{N}$. An operator  $T\in L(X)$ is  {\em nilpotent} when $T^n=0$ for some
$n \in \mathbb{N}$, while $T$ is {\em quasinilpotent} if
$\|T^n\|^{1/n}\to 0$, that is $T-\lambda \in L(X)^{-1}$  for all
complex $\lambda\ne 0$.

 Sets of {\it upper and
lower semi-Fredholm} operators, respectively, are defined as
 $\Phi_+(X)=\{ T \in L (X)\, :\, \alpha (T)<\infty\text{
 and
}R(T)\text{ is closed}\}, $
 and
 $
           \Phi_-(X)= \{ T  \in L (X) \, : \, \beta (T)<\infty
           \}.
           $
             For upper and
lower semi-Fredholm operators  the index is defined by
           $\ind (T)=\alpha (T)-\beta (T)$. If
           $T\in\Phi_+(X)\backslash\Phi_-(X)$, then $\ind (T)=-\infty$,
           and if $T\in\Phi_-(X)\backslash\Phi_+(X)$, then
           $\ind(T)=+\infty$.
                      The set of Fredholm operators is defined as
           $
\Phi (X)=  \Phi_+(X) \cap \Phi_-(X). $
 The sets  of upper semi-Weyl, lower semi-Weyl and  Weyl operators are
defined as $\W_+(X)=\{ T\in\Phi_+ (X): \ind (T)\le 0\}$, $\W_-(X)=\{ T\in\Phi_- (X): \ind (T)\ge 0\}$  and  $\W(X)=\{ T\in\Phi (X): \ind (T)=0\}$, respectively.
Denote by
$\text{asc}(A)$ ($\text{dsc}(A)$) the ascent (the descent) of
$A\in B(X)$, i.e.  the smallest  $n\in\NN_0$ such that
$N(A^n)=N(A^{n+1})$ ($R(A^n)=R(A^{n+1})$). If such $n$ does not
exist, then $\text{asc} (A)=\infty$ ($\text{dsc} (A)=\infty$). An
operator $A\in B(X)$ is {\it Browder} if it is Fredholm and has
finite ascent and finite descent.
 An operator $A\in B(X)$ is called {\it upper
semi-Browder} if it is upper semi-Fredholm of finite ascent, and
{\it lower semi-Browder} if it is lower semi-Fredholm of finite
descent.  Let $\mathcal{B}(X)$  (resp. $\mathcal{B}_+(X)$, $\mathcal{B}_-(X)$)
denote the set of all Browder  (resp. upper semi-Browder, lower semi-Browder)  operators.
Corresponding  to these classes  we have the following spectra and resolvent sets: the upper Fredholm spectrum $\sigma_{\Phi_+}(T)$ of $T$ is defined by $\sigma_{\Phi_+}(T)=\{\lambda\in\CC: T-\lambda\notin\Phi_+(X)\}$ and the upper Fredholm resolvent set is $\rho_{\Phi_+}(T)=\CC\setminus \sigma_{\Phi_+}(T)$, and similarly for the  lower Fredholm (resp. Fredholm, upper (lower) Weyl, Weyl, upper (lower) Browder, Browder) spectrum and resolvent set.

An operator $T\in L(X)$ is Riesz, $T\in R(X)$,  if
$T-\lambda\in\Phi (X)$ for all $\lambda\in \CC\backslash\{ 0\}$.
We shall say that an operator $T\in B(X)$ is {\it polynomially
Riesz} and write $T\in{\rm Poly}^{-1}R(X)$ if there exists a
nonzero complex polynomial $p(z)$ such that $p(T)\in R(X)$. Recall that if $T\in{\rm Poly}^{-1}R(X)$, then
there exists a unique  polynomial $\pi_T$ of
minimal degree with leading coefficient 1  such that $\pi_T(T)\in R(X)$ which we call the minimal polynomial of $T$
(see \cite{ZDHB}).

If $M$ is a subspace of $X$ such that $T(M) \subset M$, $T \in
L(X)$, it is said that $M$ is {\em $T$-invariant}. We define $T_M:M
\to M$ as $T_Mx=Tx, \, x \in M$.  If $M$ and $N$ are two closed
$T$-invariant subspaces of $X$ such that $X=M \oplus N$, we say that
$T$ is {\em completely reduced} by the pair $(M,N)$ and it is
denoted by $(M,N) \in Red(T)$. In this case we write $T=T_M \oplus
T_N$ and say that $T$ is a {\em direct sum} of $T_M$ and $T_N$.

An operator $T \in L(X)$ is said to admit a {\em generalized Kato
decomposition}, abbreviated as GKD, if there exists a pair $(M,N)
\in Red(T)$ such that $T_M$ is Kato and $T_N$ is quasinilpotent. A
relevant case is obtained if we assume that $T_N$ is nilpotent. In
this case $T$ is said to be of {\em Kato type}. An operator is said
to be {\em essentially Kato} if it admits a GKD $(M,N)$ such that
$N$ is finite-dimensional. If $T$ is essentially Kato then $T_N$ is
nilpotent, since every quasinilpotent operator on a finite
dimensional space is nilpotent. 
The semi-Fredholm operators
 belong to the class of essentially Kato operators
\cite[Theorem 16.21]{Mu}.
For $T\in L(X)$, {\em the Kato spectrum},
{\em the  essentially Kato spectrum}, { \em the Kato type spectrum}
and {\it the generalized Kato spectrum} is defined by

$ \sigma_K(T) = \{\lambda \in \CC: T-\lambda \ \text{ is\ not\ Kato}\}$,

$\sigma_{eK}(T) = \{\lambda \in \CC: T-\lambda\ \text{ is\ not\
essentially\ Kato}\}$,

$ \sigma_{Kt}(T)  =  \{\lambda \in \CC: T-\lambda\ \text{ is\ not\
of\ Kato\ type}\}$,

$ \sigma_{gK}(T) = \{\lambda \in \CC: T-\lambda\ \text{ does\ not\
admit\ a\  GKD}\}$,  respectively.

\medskip
\begin{definition}
 An operator $T \in L(X)$ is said to  admit a {\em  Kato-Riesz
decomposition}, abbreviated as GKRD, if there exists a pair $(M,N)
\in Red(T)$ such that $T_M$ is Kato and $T_N$ is Riesz.
\end{definition}

For $T\in L(X)$ {\it the generalized Kato-Riesz spectrum} is defined by
$$
\sigma_{gKR}(T)=\{\lambda\in\CC: T-\lambda\ {\rm does\ not\ admit\  a\ GKRD}\}.
$$

Clearly, $\sigma_{gKR}(T)\subset\sigma_{gK}(T)\subset \sigma_{Kt}(T)\subset
\sigma_{eK}(T)\subset\sigma_{K}(T)\subset \sigma_{ap}(T)\cap
\sigma_{su}(T)$.

An operator $T\in L(X)$ is said to be Drazin invertible if there exists $S\in L(X)$ such that $TS=ST$, $STS=S$  and $TST-T$ is nilpotent. This concept has been  generalized by Koliha \cite{koliha} by replacing the third condition in this definition 
 with the condition that $TST-T$ is quasinilpotent. Recall that $T$ is generalized Drazin invertible if and only if $0\notin\acc\, \sigma (T)$, and this is also equivalent to the fact that
 $T=T_1\oplus T_2$ where $T_1$ is invertible and $T_2$ is quasinilpotent. If we replace the third condition in the previous definitions by condition that $TST-T$ is Riesz, we get the concept of generalized Drazin-Riesz invertible operators.
  The generalized Drazin-Riesz  spectrum of $T\in L(X)$ is
defined by
$
\sigma_{gDR}(T)=\{\lambda\in\CC: T-\lambda\ {\rm is\ not\
generalized\ Drazin-Riesz\ invertible}\}.
$
In the second section of this paper we prove that an operator $T\in L(X)$ is generalized Drazin-Riesz invertible if and only if $T$ admits a GKRD and $0$ is not an interior point of $\sigma (T)$ and this is also equivalent to the fact that $T=T_1\oplus T_2$ where $T_1$ is invertible and $T_2$ is Riesz.
Also
 we characterize operators which are a direct sum of a Riesz operator  and a bounded below  (resp. surjective,  upper (lower)  semi-Fredholm, upper (lower) semi-Weyl, upper (lower)semi-Browder) operator. These operators generalize the class of generalized Drazin invertible operators and also the class of generalized Drazin-Riesz invertible  operators,  and hence we shall call them {\it generalized Drazin-Riesz bounded below (resp. generalized Drazin-Riesz surjective,  generalized Drazin-Riesz upper (lower) semi-Fredholm, generalized Drazin-Riesz Fredholm, etc) 
 operators}, and we shall
  use the following notations:
\begin{center}
\begin{tabular}{|c|c|c|} \hline
${\bf R_1}=\M(X)$ & ${\bf R_2}=\Q(X)$ & ${\bf R_3}=L(X)^{-1}$ \\
\hline
${\bf R_4}=\mathcal{B}_+(X)$ & ${\bf R_5}=\mathcal{B}_-(X)$ & ${\bf R_6}=\mathcal{B}(X)$ \\
\hline
${\bf R_7}=\Phi_+(X)$ & ${\bf R_8}=\Phi_-(X)$ & ${\bf R_9}=\Phi(X)$ \\
\hline
${\bf R_{10}}=\mathcal{W}_+(X)$ & ${\bf R_{11}}=\mathcal{W}_-(X)$ & ${\bf R_{12}}=\mathcal{W}(X)$ \\
\hline
\end{tabular}
\end{center}
\noindent  

$${\bf g DR R}_i(X)=\{ T \in L(X) : T=T_1\oplus T_2,\ T_1\in {\bf R}_i,\ T_2\ {\rm is\ Riesz}\},\ 1\le i\le 12.
$$

%

An operator $T\in L(X)$ is said to have the single-valued extension property at $\lambda_0\in\CC$ (SVEP at $\lambda_0$ for breviety) if for every open disc $\D_{\lambda_0}$ centerd at $\lambda_0$ the only analitic function $f:\D_{\lambda_0}\to X$ satisfying $(T-\lambda)f(\lambda)=0 $ for all $\lambda\in \D_{\lambda_0}$ is the function $f\equiv 0$.
An operator $T\in L(X)$ is said to have the SVEP if $T$ has the SVEP at every point $\lambda\in\CC$.
We denote by $\S(T)$ the open set of $\lambda\in\CC$ where $T$ fails to have  SVEP at $\lambda$.

Evidently, $T\in L(X)$ has the SVEP at every point of the resolvent set $\rho (T)$. Moreover, from the identity theorem for analytic function and  $\sigma(T)=\sigma(T^\prime)$, where  $T^\prime\in L(X^\prime )$ is the adjoint operator of $T$, it follows that $T$ and $T^\prime$ have the SVEP at every point of the boundary   $\partial\sigma(T)$ of the spectrum. In particular, $T$ and $T^\prime$ have the SVEP at every isolated  point of the spectrum. Hence, there is implication
\begin{equation}\label{w1}
 \sigma(T)\ {\rm does\ not\ cluster\ at\ }\lambda_0\Longrightarrow T\ {\rm and\ }T^\prime\ {\rm have\ the\ SVEP\ at\ }\lambda_0.
\end{equation}
Moreover, from the indentity theorem for analytic functions  we have (see \cite{aienarosas}, p. 182):
\begin{equation}\label{w2}
  \sigma_{ap}(T)\ {\rm does\ not\ cluster\ at\ }\lambda_0\Longrightarrow T\ {\rm has\ the\ SVEP\ at\ }\lambda_0
\end{equation}
and
\begin{equation}\label{w3}
  \sigma_{su}(T)\ {\rm does\ not\ cluster\ at\ }\lambda_0\Longrightarrow T^\prime\ {\rm has\ the\ SVEP\ at\ }\lambda_0.
\end{equation}
P. Aiena and E. Rosas proved that if $\lambda_0-T$ is of Kato type, then the implications \eqref{w1}, \eqref{w2} and \eqref{w3} can be reversed \cite{aienarosas}. Q. Jiang and H. Zhong \cite{kinezi}  showed that if $\lambda_0-T$ admits a GKD,  then the following statements are equivalent:

(i) $T$\ ($T^\prime$) has the SVEP at $\lambda_0$;

(ii) $\sigma_{ap}(T)$ ($\sigma_{su}(T)$) does not cluster at $\lambda_0$;

(iii) $\lambda_0$ is not an interior point of $\sigma_{ap}(T)$ ($\sigma_{su}(T)$),

\noindent that is, the implications \eqref{w1}, \eqref{w2} and \eqref{w3}   can be also reversed in the case that $\lambda_0-T$ admits a GKD.
We extend  this result to the case of operators which admit a GKRD. Precisely, we show that  if $\lambda_0-T$ admits a GKRD,  then the following statements are equivalent:

(i) $T\ (T^\prime)\ {\rm has\ the\ SVEP\ at\ }\lambda_0$;

(ii) $\lambda_0$ is not an interior point of $\sigma_{ap}(T)\ (\sigma_{su}(T))$;

(iii) $\sigma_{\B_+}(T)\ (\sigma_{\B_-}(T))$ does not cluster at $\lambda_0$;

(iv) $\lambda_0$ is not an interior point of $\sigma_{\B_+}(T)\ (\sigma_{\B_-}(T))$.

A Riesz operator $T$ with the  infinite spectrum  is an example of an operator which admits GKRD and has the SVEP, but the spectra $\sigma_{ap}(T)=\sigma_{su}(T)=\sigma(T)$ cluster at $0$. So, if $\lambda_0-T$ admits a GKRD, then the statement that $T\ (T^\prime)\ {\rm has\ the\ SVEP\ at\ }\lambda_0$ is not in general equivalent to the statement that  $\sigma_{ap}(T)$ ($\sigma_{su}(T)$) does not cluster at $\lambda_0$.

Also, we extend the previous results to the cases of essential spectra. Namely, we prove that if
$\lambda_0-T$ admits a GKRD,  then  $\lambda_0$ is not an interior point of $\sigma_{{\bf R}}(T)$
if and only if
 $\sigma_{{\bf R}}(T)$ does not cluster at $\lambda_0$ where ${\bf R}$ is one of $\Phi_+, \Phi_-, \Phi, \W_+, \W_-, \W$. In that way we extend results obtained in \cite{ZC}.

The third section is devoted to investigation of corresponding spectra.  We prove that the generalized Kato-Riesz spectrum is compact and that it differs from the essential Kato spectrum on at most countably  many points. As one application we get for  $T$ which is a unilateral weighted right shift operator on $\ell_p(\NN)$, $1\le p<\infty$, with weight $(\omega_n)$, and  $c(T)=\lim\limits_{n\to\infty}\inf (\omega_1\cdot\dots\cdot \omega_n)^{1/n}=0$, that  $\sigma_{gKR}(T)= \sigma(T)=\overline{D(0,r(T))}$.  Also we
obtain that  the connected hulls of the
generalized Drazin-Riesz spectrum and  the generalized Kato-Riesz spectrum ot $T\in L(X)$ are
equal and hence they are empty in the same time. Moreover, we prove that it happens if and only if  $T$ is a polynomially Riesz operator.  We prove  that
$\partial\sigma_{\bf R}(T)\cap\acc\, \sigma_{\bf R}(T)\subset \sigma_{gKR}(T)$ where $\bf R$ is one of $\Phi_+, \Phi_-, \Phi, \W_+, \W_-, \W$, $\B_+$, $\B_-$, $\B$, and also
$  \partial\sigma_{ap}(T)\cap {\acc}\, \sigma_{\B_+}(T)\subset\sigma_{gKR}(T)$,
   $\partial\sigma_{su}(T)\cap {\acc}\, \sigma_{\B_-}(T)\subset\sigma_{gKR}(T)$ and $
    \partial\sigma(T)\cap {\acc}\, \sigma_{\B}(T)\subset\sigma_{gKR}(T)$.
As consequence we get that if
 $\sigma_{ap}(T)=\partial\sigma (T)$ ($\sigma_{su}(T)=\partial\sigma (T)$) and every $\lambda\in \partial\sigma (T)$ is not isolated in $\sigma (T)$,  then
$\sigma_{gKR}(T)=\sigma_{ap}(T)$ ($\sigma_{gKR}(T)=\sigma_{su}(T)$). These results are applied to some concrete cases, as  the forward and backward bilateral shifts on $c_0(\ZZ),\ell_p(\ZZ)$,  the forward and backward unilateral shifts on $  c_0(\NN), c(\NN), \ell_{\infty}(\NN)$ or $\ell_p(\NN)$, $p\ge 1$, as well as  arbitrary non-invertible isometry, and also $\rm Ces\acute{a}ro$ operator.

\section{Generalized Drazin-Riesz invertible and generalized  Drazin-Riesz semi-Fredholm  operators}

We start with the following several assertions which will be useful in the sequel.
\begin{lemma} (\cite[Lemma 2.2]{ZC})\label{lema}
Let $T \in L(X)$ and $(M,N) \in Red(T)$. The following statements
hold:
\par \noindent {\rm (i)} $T \in {\bf R}_i$ if and only if $T_M
\in {\bf R}_i$ and $T_N \in {\bf R}_i$, $1 \leq i \leq 9$, and in
that case $\ind (T)=\ind(T_M)+\ind(T_N)$;\par \noindent {\rm (ii)}
If $T_M \in {\bf R}_i$ and $T_N \in {\bf R}_i$, then $T \in {\bf
R}_i$, $10 \leq i \leq 12$.

\snoi{\rm (iii)} If $T \in {\bf R}_i$ and $T_N$ is Weyl, then
$T_M \in {\bf R}_i$, $10 \leq i \leq 12$.
\end{lemma}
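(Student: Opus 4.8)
The plan is to reduce the whole lemma to a handful of elementary identities for an operator split by a pair $(M,N)\in Red(T)$, and then to read off each of the twelve classes from them. Since $T^n=T_M^n\oplus T_N^n$ for every $n\in\NN$, a direct computation gives $N(T^n)=N(T_M^n)\oplus N(T_N^n)$ and $R(T^n)=R(T_M^n)\oplus R(T_N^n)$; moreover $R(T_M)=R(T)\cap M$ and $R(T_N)=R(T)\cap N$, and, using the continuous projections associated with $X=M\oplus N$, $R(T)$ is closed if and only if both $R(T_M)$ and $R(T_N)$ are closed. From these one obtains $\alpha(T)=\alpha(T_M)+\alpha(T_N)$, $\beta(T)=\beta(T_M)+\beta(T_N)$, $\asc(T)=\max\{\asc(T_M),\asc(T_N)\}$ and $\dsc(T)=\max\{\dsc(T_M),\dsc(T_N)\}$ (with the usual conventions for $\infty$). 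These identities are the only machinery needed.

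For (i) I would go through the classes in the order $1,2,3$, then $7,8,9$, then $4,5,6$. First, $T$ is bounded below iff $T_M$ and $T_N$ are both injective with closed range, i.e. iff $T_M,T_N\in\M(X)$; $T$ is surjective iff $R(T_M)=M$ and $R(T_N)=N$, i.e. iff $T_M,T_N\in\Q(X)$; and $T$ is invertible iff both conditions hold, which settles ${\bf R}_3$. Next, $T\in\Phi_+(X)$ iff $\alpha(T_M),\alpha(T_N)<\infty$ and both ranges are closed, i.e. iff $T_M,T_N\in\Phi_+(X)$; the case ${\bf R}_8$ is dual via $\beta$, and ${\bf R}_9=\Phi_+(X)\cap\Phi_-(X)$. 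The index formula $\ind(T)=\ind(T_M)+\ind(T_N)$ is then immediate from the additivity of $\alpha$ and $\beta$; the one point to check is that within a single class ${\bf R}_i$ with $7\le i\le 9$ the indeterminate combination of $+\infty$ with $-\infty$ never arises (for $i=7$ only $-\infty$ or a finite value occurs, for $i=8$ only $+\infty$ or a finite value, for $i=9$ only finite values), so the sum always makes sense. Finally, upper (lower) semi-Browder means upper (lower) semi-Fredholm of finite ascent (descent) and Browder means Fredholm of finite ascent and descent, so ${\bf R}_4,{\bf R}_5,{\bf R}_6$ follow by intersecting the semi-Fredholm statements just obtained with the ascent/descent identities.

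For (ii), assume $T_M,T_N\in{\bf R}_i$ with $10\le i\le 12$. If both are upper semi-Weyl they lie in $\Phi_+(X)$, hence $T\in\Phi_+(X)$ by (i), and $\ind(T)=\ind(T_M)+\ind(T_N)\le 0$ since both summands are $\le 0$ (so no $+\infty$ intervenes and the sum is legitimate); thus $T\in\W_+(X)$. The lower semi-Weyl and Weyl cases are the same argument with $\ge 0$ and with $=0$. The converse genuinely fails here, since a Weyl operator can be realised as a direct sum of an operator of index $+k$ and one of index $-k$, so membership of $T$ in ${\bf R}_i$ puts no constraint on the individual indices. Part (iii) records what survives under the extra hypothesis: if $T\in{\bf R}_i$ and $T_N$ is Weyl, then $T_N\in\Phi(X)$ with $\ind(T_N)=0$, so from $T\in\Phi_+(X)$ (resp. $\Phi_-(X)$, $\Phi(X)$) and the ``only if'' direction of (i) I get $T_M\in\Phi_+(X)$ (resp. $\Phi_-(X)$, $\Phi(X)$), and then $\ind(T_M)=\ind(T)-\ind(T_N)=\ind(T)$, so the defining inequality $\le 0$ (resp. $\ge 0$, $=0$) passes from $T$ to $T_M$, that is $T_M\in{\bf R}_i$.

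The only place where care is really needed is the bookkeeping with the extended index conventions ($\pm\infty$) in parts (i) and (ii): one must make sure the additivity formula and the Weyl-class inequalities are never applied to the forbidden sum $+\infty+(-\infty)$, and this is dealt with once and for all by the case distinctions indicated above. Everything else --- the kernel/range decomposition identities and the coordinatewise description of closed range --- is routine point-set topology on the complemented decomposition $X=M\oplus N$, and I would not write it out in detail.
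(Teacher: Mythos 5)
Your argument is correct, and it is the standard one: the paper does not prove this lemma at all but simply cites it from \cite[Lemma 2.2]{ZC}, and the expected proof there is exactly the elementary bookkeeping you carry out --- the decompositions $N(T^n)=N(T_M^n)\oplus N(T_N^n)$, $R(T^n)=R(T_M^n)\oplus R(T_N^n)$, closedness of $R(T)$ via the continuous projections, additivity of $\alpha$, $\beta$ and the index, and the $\max$ formulas for ascent and descent. Your care with the $\pm\infty$ index conventions and your remark on why the converse fails for the Weyl classes (index cancellation) are both exactly right.
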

%
%

\begin{lemma}\label{lema-Riesz}{(\cite[Lemma 2.11]{ZDHB})}
Let $T\in L(X)$ and let $(M,N)\in Red (T)$. Then
  $T$ is
Riesz if and only if $T_M$ and $T_N$ are Riesz.
\end{lemma}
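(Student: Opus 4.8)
The plan is to reduce everything to the Fredholm case of Lemma~\ref{lema}. First I would record the elementary fact that $(M,N)\in Red(T)$ implies $(M,N)\in Red(T-\lambda)$ for every $\lambda\in\CC$: indeed $M$ and $N$ are closed and $(T-\lambda)$-invariant, $X=M\oplus N$, and the reductions satisfy $(T-\lambda)_M=T_M-\lambda I_M$ and $(T-\lambda)_N=T_N-\lambda I_N$. Then, fixing $\lambda\neq 0$ and applying Lemma~\ref{lema}(i) with $i=9$ to the operator $T-\lambda$ and the reducing pair $(M,N)$, we get that $T-\lambda\in\Phi(X)$ if and only if $T_M-\lambda\in\Phi(M)$ and $T_N-\lambda\in\Phi(N)$.

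Now I would assemble the statement. By definition $T$ is Riesz exactly when $T-\lambda\in\Phi(X)$ for all $\lambda\neq 0$, which by the previous step means $T_M-\lambda\in\Phi(M)$ and $T_N-\lambda\in\Phi(N)$ for all $\lambda\neq 0$; since a universally quantified conjunction equals the conjunction of the universally quantified statements, this is equivalent to ``$T_M-\lambda\in\Phi(M)$ for all $\lambda\neq 0$'' together with ``$T_N-\lambda\in\Phi(N)$ for all $\lambda\neq 0$'', i.e. to $T_M$ being Riesz and $T_N$ being Riesz. This proves both implications at once. (Equivalently, one may phrase the whole argument spectrally: $T$ is Riesz iff its Fredholm spectrum is contained in $\{0\}$, and the Fredholm spectrum of a reduced operator is the union of those of its two parts, so $\sigma_{\Phi}(T)=\sigma_{\Phi}(T_M)\cup\sigma_{\Phi}(T_N)\subset\{0\}$ iff each of $\sigma_{\Phi}(T_M),\sigma_{\Phi}(T_N)$ is contained in $\{0\}$.)

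The only thing to be slightly careful about is that Lemma~\ref{lema}(i) genuinely covers the class ${\bf R}_9=\Phi(X)$ and applies with $M$ or $N$ possibly finite-dimensional — in which case the reduction on that summand is automatically Riesz, consistently with the conclusion. If one wanted a proof not invoking Lemma~\ref{lema}, one would instead check directly that for a reducing pair $N(T-\lambda)=N(T_M-\lambda)\oplus N(T_N-\lambda)$ and $R(T-\lambda)=R(T_M-\lambda)\oplus R(T_N-\lambda)$, so $\alpha(T-\lambda)=\alpha(T_M-\lambda)+\alpha(T_N-\lambda)$, $\beta(T-\lambda)=\beta(T_M-\lambda)+\beta(T_N-\lambda)$, and $R(T-\lambda)$ is closed iff both summand ranges are closed (using that $M,N$ are closed complementary subspaces); the Fredholm equivalence, hence the Riesz equivalence, follows. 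There is no real obstacle here: the content of the assertion is entirely carried by the additivity of the Fredholm data across a reducing decomposition.
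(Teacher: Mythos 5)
Your proof is correct. The paper does not prove Lemma~\ref{lema-Riesz} itself (it is imported from \cite[Lemma 2.11]{ZDHB}), but your argument is exactly the standard one: since $(M,N)\in Red(T-\lambda)$ with $(T-\lambda)_M=T_M-\lambda$ and $(T-\lambda)_N=T_N-\lambda$, Lemma~\ref{lema}(i) with $i=9$ gives $T-\lambda\in\Phi(X)$ iff $T_M-\lambda\in\Phi(M)$ and $T_N-\lambda\in\Phi(N)$ for each fixed $\lambda\neq 0$, and interchanging the universal quantifier over $\lambda\neq 0$ with the conjunction yields the Riesz equivalence; your fallback direct verification via additivity of kernels, ranges and closedness across the reducing pair is likewise sound.
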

The following proposition  is fundamental for our purpose.
\begin{proposition} \label{gap0}(\cite[Proposition 3.1]{ZC})
Let $T \in L(X)$. Then the following implications hold:
\par \noindent {\rm (i)} If $T$ is Kato and $0$ is an accumulation point of $\rho_{ap}(T)$ (resp. $ \rho_{su}(T)$,  $\rho(T)$), then $T$ is bounded below (resp. surjective, invertible);

\par \noindent {\rm (ii)} If $T$ is Kato and $0$ is an accumulation point of $\rho_{\B_+}(T)$ (resp. $\rho_{\B_-}(T)$, $\rho_{\B}(T)$), then $T$ is bounded below (resp. surjective, invertible);

\par \noindent {\rm (iii)} If $T$ is Kato and $0$ is an accumulation point of $\rho_{\Phi_+}(T)$ (resp. $\rho_{\Phi_-}(T)$, $\rho_{\Phi}(T)$), then $T$ is upper  semi-Fredholm (resp. lower semi-Fredholm, Fredholm);

\par \noindent {\rm (iv)} If $T$ is Kato and $0$ is an accumulation point of $\rho_{\W_+}(T)$ (resp. $\rho_{\W_-}(T)$, $\rho_{\W}(T)$), then $T$ is upper semi-Weyl (resp. lower semi-Weyl, Weyl).
\end{proposition}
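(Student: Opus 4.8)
The plan is to reduce all four parts to the punctured-neighbourhood behaviour of Kato operators. I would first isolate two ingredients used throughout. \emph{(a) The sharp stability theorem for Kato (semi-regular) operators:} if $T$ is Kato there is $\epsilon>0$ such that $T-\lambda$ is Kato for every $|\lambda|<\epsilon$ and the functions $\lambda\mapsto\alpha(T-\lambda)$ and $\lambda\mapsto\beta(T-\lambda)$ are constant on the \emph{whole} disc $D(0,\epsilon)$; hence $\alpha(T-\lambda)=\alpha(T)$, $\beta(T-\lambda)=\beta(T)$, $\ind(T-\lambda)=\ind(T)$ there, and of course $R(T)$ is closed (see, e.g., \cite{Mu}). \emph{(b) An ascent/descent remark:} if $S$ is Kato and $\asc(S)=p<\infty$, then $S$ is injective --- for $x\in N(S)$ the Kato condition gives $x=S^{p}y$, whence $y\in N(S^{p+1})=N(S^{p})$ and $x=S^{p}y=0$; dually, if $S$ is Kato and $\dsc(S)<\infty$ then $S$ is surjective, which follows by applying the first assertion to the adjoint $S'$ (again Kato), since $\dsc(S)<\infty$ together with closedness of all $R(S^{n})$ forces $\asc(S')<\infty$, hence $N(S')=\{0\}$, i.e. $R(S)$ is dense, hence (being closed) equal to $X$.

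Fix a Kato operator $T$ and $\epsilon$ as in (a). For (i): if $0\in\acc\,\rho_{ap}(T)$, pick $\lambda$ with $0<|\lambda|<\epsilon$ such that $T-\lambda$ is bounded below; then $\alpha(T)=\alpha(T-\lambda)=0$ and $R(T)$ is closed, so $T$ is bounded below. Replacing ``$\alpha$, bounded below'' by ``$\beta$, surjective'' (and by both at once, yielding ``invertible'') gives the surjective and invertible cases. For (ii) I would first check $\rho_{\B_+}(T)\cap(D(0,\epsilon)\setminus\{0\})\subseteq\rho_{ap}(T)$: if $0<|\lambda|<\epsilon$ and $T-\lambda$ is upper semi-Browder, then $T-\lambda$ is Kato with finite ascent, hence injective by (b), and hence bounded below (its range is closed, being Kato). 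Since accumulation is a local property, $0\in\acc\,\rho_{\B_+}(T)$ therefore forces $0\in\acc\,\rho_{ap}(T)$, and (i) gives that $T$ is bounded below. The $\B_-$ case is dual, using the second half of (b) to obtain $0\in\acc\,\rho_{su}(T)$, and the $\B$ case combines both to obtain $0\in\acc\,\rho(T)$.

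For (iii): if $0\in\acc\,\rho_{\Phi_+}(T)$, pick $\lambda$ with $0<|\lambda|<\epsilon$ and $T-\lambda\in\Phi_+(X)$; then $\alpha(T)=\alpha(T-\lambda)<\infty$, and with $R(T)$ closed this gives $T\in\Phi_+(X)$. The $\Phi_-$ and $\Phi$ cases are identical, via $\beta(T)=\beta(T-\lambda)$. For (iv): since $\rho_{\W_+}(T)\subseteq\rho_{\Phi_+}(T)$, part (iii) already yields $T\in\Phi_+(X)$, and choosing $\lambda$ with $T-\lambda\in\W_+(X)$ one gets $\ind(T)=\ind(T-\lambda)\le 0$, so $T\in\W_+(X)$; the $\W_-$ and $\W$ cases are analogous. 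Here one only needs that the equalities in (a) persist when a defect number is infinite, which is harmless since membership in $\W_\pm$ constrains only the sign of the index.

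The single load-bearing point is ingredient (a) in its sharp form: constancy of $\alpha$ and $\beta$ across the \emph{entire} disc $D(0,\epsilon)$ --- not merely the punctured one --- is exactly what lets data about $T-\lambda$ be transported back to $\lambda=0$. Granting that classical fact, the rest is the elementary observation (b) together with routine bookkeeping across the twelve classes; if one instead wanted a self-contained treatment, proving (a) is where essentially all the work would lie.
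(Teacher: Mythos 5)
Your argument is correct. Note that the paper does not prove this proposition at all --- it is imported verbatim from \cite[Proposition 3.1]{ZC} --- and the proof given there rests on exactly the two ingredients you isolate: the constancy of $\alpha(\lambda-T)$ and $\beta(\lambda-T)$ on components of the Kato (semi-regular) resolvent set (Aiena \cite{Ai}, Theorem 1.40 and its corollaries, or M\"uller \cite{Mu}), together with the observation that a Kato operator of finite ascent is injective and, dually, one of finite descent is surjective. Your reduction of (ii) to (i) and of (iv) to (iii), and your handling of infinite defect numbers via the $\pm\infty$ index convention, are all sound, so there is nothing to fix.
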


For a subspace $M$ of $X$ its annihilator $M^\bot$ is defined by
$$
M^\bot=\{f\in X^\prime:f(x)=0\ {\rm for\ all\ } x\in M\}.
$$

\begin{proposition} \label{decomp}
Let $T\in L(X)$. If $T$ admits a GKRD$(M,N)$, then
$(N^\bot, M^\bot)$ is a GKRD for $T^\prime$.
\end{proposition}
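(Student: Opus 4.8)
The plan is to transport the decomposition to the dual space $X^\prime$ by Banach-space duality, using the (standard) facts that the Banach adjoint sends Kato operators to Kato operators and Riesz operators to Riesz operators. So, let $P \in L(X)$ be the bounded projection of $X$ onto $M$ along $N$; since $M$ and $N$ are both $T$-invariant, this is equivalent to $TP = PT$. Dualizing, $P^\prime$ is a bounded idempotent on $X^\prime$ with $R(P^\prime) = N(P)^\bot = N^\bot$ and $N(P^\prime) = R(P)^\bot = M^\bot$, so $X^\prime = N^\bot \oplus M^\bot$ is a topological direct sum; and $T^\prime P^\prime = P^\prime T^\prime$ shows that $N^\bot$ and $M^\bot$ are $T^\prime$-invariant, that is, $(N^\bot, M^\bot) \in Red(T^\prime)$.

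Next I would identify the two reductions. The restriction maps $r \colon N^\bot \to M^\prime$, $f \mapsto f|_{M}$, and $s \colon M^\bot \to N^\prime$, $f \mapsto f|_{N}$, are topological isomorphisms, with bounded inverses $g \mapsto g \circ P$ and $g \mapsto g \circ (I - P)$; and a short computation (from $f(Tx) = f(T_M x)$ for $x \in M$, together with the analogous identity on $N$) yields $r \circ (T^\prime)_{N^\bot} = (T_M)^\prime \circ r$ and $s \circ (T^\prime)_{M^\bot} = (T_N)^\prime \circ s$. Thus $(T^\prime)_{N^\bot}$ is, up to a linear homeomorphism, the operator $(T_M)^\prime$, and $(T^\prime)_{M^\bot}$ is $(T_N)^\prime$.

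Finally, I would invoke that both the Kato property and the Riesz property are invariant under conjugation by an invertible operator, so it suffices to check that $(T_M)^\prime$ is Kato and $(T_N)^\prime$ is Riesz. The former is the well-known fact that the adjoint of a Kato (semi-regular) operator is again Kato (see \cite{Mu}). The latter is immediate: for every $\lambda \neq 0$ the operator $T_N - \lambda$ is Fredholm (since $T_N$ is Riesz), hence so is $(T_N - \lambda)^\prime = (T_N)^\prime - \lambda$ by the closed range theorem, and therefore $(T_N)^\prime$ is Riesz. Consequently $(T^\prime)_{N^\bot}$ is Kato and $(T^\prime)_{M^\bot}$ is Riesz, so $(N^\bot, M^\bot)$ is a GKRD for $T^\prime$. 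The only step that goes beyond routine duality bookkeeping is invoking the stability of the Kato class under taking adjoints; everything else (the dual direct-sum decomposition and the intertwining identities) is standard.
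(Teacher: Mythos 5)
Your proof is correct and follows essentially the same route as the paper: dualize the projection associated with $X=M\oplus N$ to obtain $X^\prime=N^\bot\oplus M^\bot$ with $(N^\bot,M^\bot)\in Red(T^\prime)$, then transfer the Kato and Riesz properties through duality. The only difference is in the bookkeeping: you identify $(T^\prime)_{N^\bot}$ and $(T^\prime)_{M^\bot}$ explicitly with $(T_M)^\prime$ and $(T_N)^\prime$ via the restriction isomorphisms and use similarity invariance, whereas the paper gets the Riesz part by applying Lemma \ref{lema-Riesz} to the adjoint of the globally Riesz operator $TP_N=0\oplus T_N$ and cites the proof of Aiena's Theorem 1.43 for the Kato part --- both arguments rest on the same underlying duality facts.
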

\begin{proof} Suppose that $T$ admits a GKRD$(M,N)$. Then  $(M,N)\in Red (T)$, $T_M$ is Kato, $T_N$ is Riesz and $(N^\bot, M^\bot)\in Red (T^\prime)$.
Let $P_M$ be the projection  of $X$ onto $M$ along $N$. Then $P_M^\prime$ is also a projector, $N(P_M^\prime)=R(P_M)^\bot=M^\bot$ and  $R(P_M^\prime)=N(P_M)^\bot=N^\bot$ since $R(P_M)$ is closed. Consequently,  $X^\prime=R(P_M^\prime)\oplus N(P_M^\prime)= N^\bot\oplus M^\bot$. For $P_N=I-P_M$ we have that $(M,N)\in Red (TP_N)$,  $TP_N=P_NT$ and since $TP_N=(TP_N)_M\oplus (TP_N)_N=0\oplus T_N$,  according to Lemma \ref{lema-Riesz}, it follows that  $TP_N$ is Riesz.
\ Consequently, $T^\prime P_N^\prime=P_N^\prime T^\prime$ is Riesz, $(N^\bot, M^\bot)\in Red (T^\prime P_N^\prime )$ and since  $R(P_N^\prime)=N(P_N)^\bot=M^\bot$,  we conclude that $(T^\prime P_N^\prime)_{M^\bot}={T^\prime}_{M^\bot}$ is Riesz according to Lemma \ref{lema-Riesz}. From the proof of Theorem 1.43 in \cite{Ai} it follows  that ${T^\prime}_{N^\bot}$ is Kato. Therefore, $(N^\bot, M^\bot)$ is a GKRD for $T^\prime$.
\end{proof}

\begin{definition}
An operator $T\in L(X)$ is  {\em generalized Drazin-Riesz
invertible} if there exists $S \in L(X)$ such that
\[TS=ST, \; \; \; STS=S, \; \; \; TST-T \; \; \text{is Riesz}.\]
\end{definition}
Clearly, every Riesz operator is generalized Drazin-Riesz
invertible.
The set of generalized Drazin-Riesz invertible operators is denoted by
$L(X)^{DR}$.
  Harte introduced the concept of a quasipolar element in a Banach algebra \cite[Definition 7.5.2]{H}:  an element $a$ of a Banach algebra $A$ is {\it quasipolar} if there is an idempotent $q\in A$ commuting with $a$ such that $a(1-q)$ is quasinilpotent and $q\in (Aa)\cap(aA)$. Koliha \cite{koliha} proved that an element is quasipolar if and only if it is generalized Drazin invertible.
We shall say that $T\in L(X)$  is {\it Riesz-quasipolar} if there exists a bounded projection $Q$ satisfying
\begin{equation}\label{Riesz-qp}
  TQ=QT,\ T(I-Q)\ {\rm is\ Riesz},\ Q\in(L(X)T)\cap (TL(X)).
\end{equation}
In the following theorem we show that an operator $T\in L(X)$  is  Riesz-quasipolar if and only if $T$ is generalized Drazin-Riesz invertible and moreover, it is also equivalent to the fact that $T$ admits a GKRD and $T$ and $T^\prime$ have the SVEP at $0$.

\begin{theorem} \label{t3}
Let $T\in L(X)$. The following
conditions are equivalent:

\par \noindent {\rm
(i)} There exists $(M,N)\in Red(T)$ such that $ T_M$ is invertible   and $ T_N$ is Riesz;

\par\noindent {\rm (ii)} $T$ admits a GKRD and
$0\notin{\rm int}\, \sigma(T)$.

\par\noindent {\rm (iii)} $T$ admits a GKRD and $T$ and $T^\prime$ have the SVEP at $0$;

\par \noindent {\rm (iv)} $T$ is generalized Drazin-Riesz invertible;

\par \noindent {\rm (v)} $T$ is Riesz-quasipolar;

\par \noindent {\rm (vi)}
There exists a bounded projection $P$ on $X$ which commutes with $T$
such that $T+P$ is Browder and $TP$ is Riesz;

\par \noindent {\rm
(vii)} There exists $(M,N)\in Red(T)$ such that $ T_M$ is Browder   and $ T_N$ is Riesz;

\par \noindent {\rm (viii)} $T$ admits a GKRD  and
$0\notin {\rm acc}\, \sigma_{\mathcal{B}}(T)$;

\par \noindent {\rm (ix)} $T$ admits a GKRD  and
$0\notin {\rm int}\, \sigma_{\mathcal{B}}(T)$.
\end{theorem}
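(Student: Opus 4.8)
The plan is to prove Theorem~\ref{t3} by establishing a cycle of implications among the nine conditions, exploiting the symmetry between the Kato/invertible structure and its dual, and using Propositions~\ref{gap0} and~\ref{decomp} as the workhorses. A natural route is $\mathrm{(i)}\Rightarrow\mathrm{(iv)}\Rightarrow\mathrm{(v)}\Rightarrow\mathrm{(vi)}\Rightarrow\mathrm{(vii)}\Rightarrow\mathrm{(i)}$ for the ``algebraic'' block, then $\mathrm{(i)}\Rightarrow\mathrm{(iii)}\Rightarrow\mathrm{(ii)}\Rightarrow\mathrm{(i)}$ and $\mathrm{(vii)}\Leftrightarrow\mathrm{(viii)}\Leftrightarrow\mathrm{(ix)}$ for the ``spectral'' block, so that every condition is linked.

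First I would record the easy direction $\mathrm{(i)}\Rightarrow\mathrm{(iv)}$: given $(M,N)\in Red(T)$ with $T_M$ invertible and $T_N$ Riesz, set $S=T_M^{-1}\oplus 0$; then $TS=ST=P_M$ (the projection onto $M$ along $N$), $STS=S$, and $TST-T=0\oplus(-T_N)$, which is Riesz by Lemma~\ref{lema-Riesz}. For $\mathrm{(iv)}\Rightarrow\mathrm{(v)}$ take $Q=TS=ST$; it is a bounded idempotent commuting with $T$, $T(I-Q)=T-TST$ is Riesz, and $Q=TS=ST\in(TL(X))\cap(L(X)T)$, so $T$ is Riesz-quasipolar. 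The implication $\mathrm{(v)}\Rightarrow\mathrm{(vi)}$ is obtained by putting $P=I-Q$: then $TP=T(I-Q)$ is Riesz, $P$ commutes with $T$, and I must check $T+P$ is Browder. Writing $M=R(Q)$, $N=N(Q)$, one has $(T+P)_N=(T+P)|_N=T_N+I_N$, and since $T_N=T(I-Q)|_N$ is Riesz, $T_N+I_N$ is Browder (as $1\notin\mathrm{acc}\,\sigma(T_N)$, indeed $I+$ Riesz is Browder); on $M$, $Q\in(TL(X))\cap(L(X)T)$ forces $T_M$ to be invertible (this is the Harte-type argument: $Q=Ta=bT$ on the algebra restricted to $M$ gives $T_M$ right and left invertible), so $(T+P)_M=T_M$ is invertible, hence Browder; by Lemma~\ref{lema} (i), $T+P$ is Browder. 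Then $\mathrm{(vi)}\Rightarrow\mathrm{(vii)}$: with $M=N(P)$, $N=R(P)$ we get $(T+P)_M=T_M$ Browder, and $TP=0\oplus T_N$ Riesz gives $T_N$ Riesz by Lemma~\ref{lema-Riesz}. Finally $\mathrm{(vii)}\Rightarrow\mathrm{(i)}$ is where Proposition~\ref{gap0} enters: $T_M$ Browder means $T_M$ is Kato (Browder $\Rightarrow$ of Kato type with trivial nilpotent part, or directly: Browder operators are Kato) and $0\notin\mathrm{acc}\,\sigma(T_M)$, i.e. $0\in\mathrm{acc}\,\rho(T_M)$ or $0\in\rho(T_M)$; if $0\in\rho(T_M)$ we are done, otherwise $0$ is an accumulation point of $\rho(T_M)$ and $T_M$ Kato, so by Proposition~\ref{gap0}(i) $T_M$ is invertible.

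For the spectral block, $\mathrm{(i)}\Rightarrow\mathrm{(iii)}$: $T_M$ invertible and $T_N$ Riesz both have SVEP at $0$ ($T_N$ because isolated/boundary spectral points always have SVEP, or because $0\notin\mathrm{acc}\,\sigma(T_M)$ for the $M$-part), and SVEP at a point passes to direct sums, so $T$ has SVEP at $0$; by Proposition~\ref{decomp} the same argument applied to the GKRD $(N^\bot,M^\bot)$ of $T'$ gives $T'$ has SVEP at $0$. For $\mathrm{(iii)}\Rightarrow\mathrm{(ii)}$ I use that if $\lambda_0-T$ admits a GKD (which a GKRD refines once one notes the Riesz part has isolated-or-clustering-only-at-$0$ spectrum) and both $T,T'$ have SVEP at $0$, then $0$ is not an interior point of $\sigma(T)$; more directly, SVEP of $T$ at $0$ gives $0\notin\mathrm{int}\,\sigma_{ap}(T)$ and SVEP of $T'$ gives $0\notin\mathrm{int}\,\sigma_{su}(T)$ when a GKD is present (this is the Jiang--Zhong result quoted in the introduction), and $\sigma(T)=\sigma_{ap}(T)\cup\sigma_{su}(T)$, so $0\notin\mathrm{int}\,\sigma(T)$. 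Then $\mathrm{(ii)}\Rightarrow\mathrm{(i)}$: from the GKRD $(M,N)$, $T_N$ Riesz and $T_M$ Kato; since $0\notin\mathrm{int}\,\sigma(T)$ and $\sigma(T)$ differs from $\sigma(T_M)$ only within the Riesz spectrum $\sigma(T_N)\subset\{0\}\cup(\text{set clustering only at }0)$, one deduces $0\notin\mathrm{int}\,\sigma(T_M)$, hence $0$ is either in $\rho(T_M)$ or an accumulation point of $\rho(T_M)$; with $T_M$ Kato, Proposition~\ref{gap0}(i) again yields $T_M$ invertible. The equivalences $\mathrm{(vii)}\Leftrightarrow\mathrm{(viii)}\Leftrightarrow\mathrm{(ix)}$ follow the same pattern using Proposition~\ref{gap0}(ii) with $\rho_{\mathcal B}(T_M)$ in place of $\rho(T_M)$, together with the observation that $\sigma_{\mathcal B}(T)$ and $\sigma_{\mathcal B}(T_M)$ differ only inside the Riesz part (where $\sigma_{\mathcal B}$ clusters at most at $0$), so ``$0\notin\mathrm{acc}$'' and ``$0\notin\mathrm{int}$'' of $\sigma_{\mathcal B}(T)$ both translate to the corresponding statement for $T_M$.

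The main obstacle I anticipate is the bookkeeping that makes the spectral conditions (ii), (viii), (ix) interact correctly with the Riesz summand: one must repeatedly use that a Riesz operator $R$ satisfies $\sigma(R)\setminus\{0\}\subset\mathrm{iso}\,\sigma(R)$ and likewise for $\sigma_{\mathcal B}(R)$, so that neither the spectrum nor the Browder spectrum of the Riesz part contributes interior points near $0$ nor obstructs the ``accumulation'' hypotheses — this lets one transfer any ``$0\notin\mathrm{int}$'' or ``$0\notin\mathrm{acc}$'' statement for $T$ to the Kato part $T_M$, which is exactly the hypothesis needed to invoke Proposition~\ref{gap0}. A secondary subtlety is verifying in $\mathrm{(v)}\Rightarrow\mathrm{(vi)}$ that the condition $Q\in(L(X)T)\cap(TL(X))$ genuinely forces invertibility of $T_M$ on the range of $Q$ rather than mere semi-invertibility; this is the Banach-algebra quasipolarity computation and should be handled by restricting the relevant factorizations to $M=R(Q)$ and noting $Q|_M=I_M$. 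Everything else is a routine assembly of Lemma~\ref{lema}, Lemma~\ref{lema-Riesz}, Proposition~\ref{decomp} and the quoted SVEP facts.
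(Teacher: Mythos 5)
There are two genuine gaps. The first is in your step (vii)$\Rightarrow$(i) (and it silently recurs in your sketch of (vii)$\Leftrightarrow$(viii)): you assert that Browder operators are Kato. This is false. A Browder operator with positive ascent --- e.g.\ the direct sum of an invertible operator with a nonzero nilpotent on a finite-dimensional space --- satisfies $N(T)\not\subset R(T^{n})$ for large $n$, so it is of Kato type but not Kato, and the nilpotent part of its Kato-type decomposition need not be trivial. Consequently a pair $(M,N)$ with $T_M$ Browder and $T_N$ Riesz is not automatically a GKRD, and you are not entitled to apply Proposition~\ref{gap0}(i) to $T_M$. The paper repairs exactly this point: since $T_M$ is Browder, hence essentially Kato, \cite[Theorem 16.21]{Mu} yields $M=M_1\oplus M_2$ with $M_2$ finite-dimensional, $T_{M_1}$ Kato and $T_{M_2}$ nilpotent, and then $(M_1,M_2\oplus N)$ is a GKRD because $T_{M_2}\oplus T_N$ is Riesz by Lemma~\ref{lema-Riesz}. (Equivalently, $0$ is at worst a finite-rank pole of the resolvent of $T_M$, so $T_M=A\oplus B$ with $A$ invertible and $B$ finite-dimensional nilpotent, and regrouping $B$ with $T_N$ gives (i) directly.) Your cycle is not irreparably broken --- your own argument in (v)$\Rightarrow$(vi) already extracts invertibility of $T_{R(Q)}$, which is a nice alternative to the paper's factorization $(T+P)(UTV+P)=I+TP$ --- but as written the passage from (vii) back to the rest of the theorem, and the derivation of a GKRD from (vii), are unproved.

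The second gap is in (iii)$\Rightarrow$(ii): you invoke the Jiang--Zhong theorem, which requires a GKD, and suggest that a GKRD ``refines'' to one. It does not: a Riesz operator with infinite spectrum admits the trivial GKRD $(\{0\},X)$ and has SVEP everywhere, yet it admits no GKD at $0$ (otherwise Jiang--Zhong would force $\sigma_{ap}(T)$ not to cluster at $0$, contradicting $0\in\acc\,\sigma(T)$); the paper makes this very point when explaining why ``does not cluster'' must be weakened to ``is not an interior point'' in the GKRD setting. The correct argument, as in the paper, adapts the technique rather than quoting the result: SVEP of $T$ at $0$ is inherited by $T_M$, which is Kato, hence bounded below by \cite[Theorem 2.49]{Ai}, so $D(0,\epsilon_1)\subset\rho_{ap}(T_M)$, while the Riesz summand removes only an at most countable set $C_1$ from $\rho_{ap}(T)$ near $0$; the dual argument via Proposition~\ref{decomp} treats $T^{\prime}$, and intersecting gives $D(0,\epsilon)\setminus C\subset\rho(T)$ with $C$ countable, whence $0\notin\inter\,\sigma(T)$. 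Your (i)$\Rightarrow$(iii) via stability of SVEP under direct sums is fine and is a legitimate alternative to the paper's route through (ii), but the return implication needs the countable-exceptional-set bookkeeping, not the GKD theorem.
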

\begin{proof}

{\rm (i)} $\Longrightarrow$ {\rm (ii)}: Suppose that
  there exists $(M,N)\in Red(T)$ such that $ T_M$ is invertible  and $ T_N$ is Riesz. Then $T_M$ is Kato and hence,  $T$ admits a
GKRD $(M,N)$. Since $T_M$ is invertible, $0\in\rho_{ap}(T_M)$ and there exists $\epsilon>0$ such that $D(0,\epsilon)\subset\rho(T_M)$. As $T_N$ is Riesz, it follows that  $0\in\acc\, \rho(T_N)$. Consequently, $0\in \acc\, (\rho(T_M)\cap\rho(T_N))=\acc\, \rho(T)$  and so, $0\notin{\rm int}\, \sigma(T)$.

{\rm (ii)} $\Longrightarrow$ {\rm (i)}: Suppose that $T$ admits a
GKRD and $0\notin{\rm int}\, \sigma(T)$. Then there exists $(M,N) \in
Red(T)$ such that $T_M$ is Kato and $T_N$ is Riesz and $0  \in \acc \, \rho(T)$. According to Lema
\ref{lema}{\rm (i)}, it follows that $0  \in \acc \,
\rho(T_M)$. From Proposition \ref{gap0} {\rm (i)}  it follows
that $T_M$
 is invertible.

{\rm (ii)} $\Longrightarrow$ {\rm (iii)}: Follows from the fact that $\sigma(T)=\sigma(T^\prime)$ and the identity theorem for analytic functions.

{\rm (iii)} $\Longrightarrow$ {\rm (ii)}: Suppose that $T$ admits a GKRD and $T$ and $T^\prime$ have the SVEP at $0$. Then there exists $(M,N)\in Red(T)$ such that $ T_M$ is Kato and $ T_N$ is Riesz. Since the SVEP at 0 of $T$ is inherited by the reductions on every closed invariant subspaces, we get that $T_M$ has the SVEP at $0$. According to \cite[Theorem 2.49]{Ai} it follows that $T_M$ is bounded below and so there exists $\epsilon_1>0$ for which  $T_M-\lambda$ is bounded below for every $|\lambda|<\epsilon_1$, that is $D(0,\epsilon_1)\subset\rho_{ap}(T_M)$. Since $T_N$ is Riesz, then $D(0,\epsilon_1)\setminus C_1\subset\rho_{ap}(T_N)$ where $C_1$ is most countable set of Riesz points of $T_N$. Consequently, $D(0,\epsilon_1)\setminus C_1\subset\rho_{ap}(T_M)\cap\rho_{ap}(T_N)=\rho_{ap}(T)$.  From Proposition \ref{decomp} it follows that $T^\prime$ admits the  GKRD $(N^\bot,M^\bot)$ and since $T^\prime$ has the SVEP at $0$, according to already proved we conclude that there exists $\epsilon_2>0$ such that $D(0,\epsilon_2)\setminus C_2
\subset\rho_{ap}(T^\prime)=\rho_{su}(T)$ where $C_2$ is most countable set of Riesz points of $T^\prime_{M^\bot}$. Let $\epsilon=\min\{\epsilon_1,\epsilon_2\}$ and $C=C_1\cup C_2$. Than $C$ is most countable and $D(0,\epsilon)\setminus C\subset \rho_{ap}(T)\cap\rho_{su}(T)=\rho(T)$. Consequently, $0\notin\inter \, \sigma(T)$.

(i)$\Longrightarrow$(iv): Suppose that there exists $(M,N)\in Red(T)$ such that $ T_M$ is invertible   and $ T_N$ is Riesz. Let $S=T_M^{-1}\oplus 0$, i.e.
$$
S=\bmatrix T_M^{-1}&0\\0&0
\endbmatrix
:\bmatrix M\\N
\endbmatrix
\to \bmatrix M\\N
\endbmatrix
$$
and  let $x\in X$. Then $x=m+n$, where $m \in M$ and $n \in N$, and
 $TSx=TS(m+n)=TT_M^{-1}m=m$ and $STx=ST(m+n)=S(T_Mm+T_Nn)=T_M^{-1}T_Mm=m$. Thus $TS=ST$. Further,
$STSx=STS(m+n)=Sm=S(m+n)=Sx$ and hence, $STS=S$. From
\begin{eqnarray*}
  T-T^2S &=& \bmatrix T_M&0\\0&T_N\endbmatrix-\bmatrix T_M^2&0\\0&T_N^2\endbmatrix\cdot \bmatrix T_M^{-1}&0\\0&0
\endbmatrix \\
   &=& \bmatrix T_M&0\\0&T_N\endbmatrix-\bmatrix T_M&0\\0&0\endbmatrix\\&=&\bmatrix 0&0\\0&T_N\endbmatrix,
\end{eqnarray*}
according to Lemma \ref{lema-Riesz}, it follows that $ T-T^2S$ is Riesz.

(iv)$\Longrightarrow$(v):  Suppose that $T$ is generalized Drazin-Riesz invertible. Then there exists $S\in L(X)$ such that $ST=TS$, $STS=S$ and $T-T^2S$ is Riesz. Let $Q=TS$. Then $Q$ is a bounded projection which commutes with $T$, $Q=TS=ST\in (L(X)T)\cap (TL(X))$ and  $T(I-Q)=T(I-TS)=T-T^2S$ is Riesz.

(v)$\Longrightarrow$(vi): Suppose  that $T$ is Riesz-quasipolar. Then there exists a bounded projection $Q$ satisfying (\ref{Riesz-qp}).
Let $P=I-Q$. Then $TP=PT$ and $TP$ is Riesz. From $I-P=Q\in (L(X)T)\cap (TL(X))$ it follows that there exist $U,V\in L(X)$ such that $I-P=UT=TV$. Then
\begin{equation}\label{d1}
  (T+P)(UTV+P)=(UTV+P)(T+P)=I+TP.
\end{equation}
Since $TP$ is Riesz, by \cite[Theorem 3.111]{Ai} it follows that $I+TP$ is Browder. Therefore from (\ref{d1}), according to \cite[Theorem 7.9.2]{H}, we obtain that $T+P$ is Browder.

{\rm (vi)}
$\Longrightarrow$ {\rm (vii)} : Suppose that
there exists a projection $P \in L(X)$ that commutes with $T$ such
that $T+P $ is Browder  and $TP$ is Riesz. For $M=N(P)$ and $N=R(P)$ we have that $(M,N)\in Red(T)$ and $T_N=(TP)_N$ is Riesz by Lemma \ref{lema-Riesz}.
\ From Lemma
\ref{lema}{\rm (i)}  it follows that  $T_M=(T+P)_M $ is Browder.

\smallskip
 {\rm (vii)} $\Longrightarrow$ {\rm (viii)}:  Let $(M,N)\in Red(T)$  and $T=T_M \oplus T_N$, where $T_M $ is Browder  and $T_N$ is Riesz. Then $0\in\rho_{{\bf \B}}(T_M)$ and there exists $\epsilon>0$ such that $D(0,\epsilon)\subset\rho_{{\bf \B}}(T_M)$. Since $T_N$ is Riesz, $\sigma_{{\bf \B}}(T_N) \subset\{0\}$ by \cite[Theorem 3.111]{Ai} and hence,  $D(0,\epsilon)\setminus\{0\}\subset\rho_{{\bf \B}}(T_M)\cap\rho_{{\bf \B}}(T_N)$. According to Lemma \ref{lema}{\rm (i)} $\rho_{{\bf \B}}(T_M)\cap\rho_{{\bf \B}}(T_N)=\rho_{{\bf \B}}(T)$ and so, $D(0,\epsilon)\setminus\{0\}\subset\rho_{{\bf \B}}(T)$. Therefore, $0\notin \acc\, \sigma_{{\bf \B}}(T)$.

 From   \cite[Theorem
16.21]{Mu} it follows that there exist two closed $T$-invariant
subspaces $M_1$ and $M_2$ such that $M=M_1 \oplus M_2, \, M_2 \,
\text{is finite dimensional}$, $T_{M_1}$ is Kato and
$T_{M_2}$ is nilpotent. Hence $X=M_1 \oplus (M_2 \oplus N)$ and $M_2 \oplus N$ is closed. From Lemma \ref{lema-Riesz} it follows
that $T_{M_2 \oplus N}=T_{M_2} \oplus T_N$ is Riesz and
thus $T$ admits the GKRD $(M_1,M_2 \oplus N)$.

\smallskip
 {\rm (viii)} $\Longrightarrow$ {\rm (ix)}: Obvious.

\smallskip
 {\rm (ix)} $\Longrightarrow$ {\rm (i)}:
 Suppose that $T$ admits a
GKRD and $0\notin{\rm int}\, \sigma_{\mathcal{B}}(T)$. Then  there exists $(M,N) \in
Red(T)$ such that $T_M$ is Kato and $T_N$ is Riesz. Since $0  \in \acc \, \rho_{\mathcal{B}}(T)$,
from Lema
\ref{lema}{\rm (i)} we get that  $0  \in \acc \,
\rho_{\mathcal{B}}(T_M)$, which according to  Proposition \ref{gap0} {\rm (ii)} implies
that $T_M$
 is invertible.
\end{proof}


\medskip

Evidently, every Riesz operator  and every Browder operator is generalized Drazin-Riesz invertible operator.

Using Proposition \ref{gap0} ((i), (ii)), similarly to the proof of  Theorem \ref{t3}, the following theorems can be proved.

\begin{theorem} \label{t1}
Let $T\in L(X)$. The following
conditions are equivalent:

\par \noindent {\rm
(i)} There exists $(M,N)\in Red(T)$ such that $ T_M$ is bounded
below and $ T_N$ is Riesz, that is
 $T\in {\bf g DR{\bf\mathcal{M}}}(X)$;


\par\noindent {\rm (ii)} $T$ admits a GKRD and
$0\notin{\rm int}\, \sigma_{ap}(T)$;

\par\noindent {\rm (iii)} $T$ admits a GKRD and $T$ has the SVEP at $0$;

\par \noindent {\rm
(iv)} There exists $(M,N)\in Red(T)$ such that $ T_M$ is upper semi-Browder and $ T_N$ is Riesz, that is $T\in {\bf g DR{\mathcal{B}_+}}(X)$;

\par \noindent {\rm (v)} $T$ admits a GKRD  and
$0\notin {\rm acc}\, \sigma_{\mathcal{B}_+}(T)$;

\par \noindent {\rm (vi)} $T$ admits a GKRD  and
$0\notin {\rm int}\, \sigma_{\mathcal{B}_+}(T)$;

\par \noindent {\rm (vii)}
There exists a bounded projection $P$ on $X$ which commutes with $T$
such that $T+P$ is uper semi-Browder and $TP$ is Riesz.
\end{theorem}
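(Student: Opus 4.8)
The plan is to run the proof exactly along the lines of Theorem \ref{t3}, performing the systematic substitutions ``invertible'' $\leadsto$ ``bounded below'', $(\sigma(T),\rho(T))\leadsto(\sigma_{ap}(T),\rho_{ap}(T))$, ``Browder'' $\leadsto$ ``upper semi-Browder'', $(\sigma_{\mathcal B}(T),\rho_{\mathcal B}(T))\leadsto(\sigma_{\mathcal B_+}(T),\rho_{\mathcal B_+}(T))$, and invoking parts (i) and (ii) of Proposition \ref{gap0} in place of their ``invertible'' counterparts. Concretely I would establish the chain (i)$\Rightarrow$(ii)$\Rightarrow$(iii)$\Rightarrow$(i), then (i)$\Rightarrow$(iv)$\Rightarrow$(v)$\Rightarrow$(vi)$\Rightarrow$(i), and finally (iv)$\Leftrightarrow$(vii). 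Throughout, the two workhorses are Lemma \ref{lema}(i), applied to $T-\lambda=(T_M-\lambda)\oplus(T_N-\lambda)$ (so that $\sigma_{ap}$, $\sigma_{\mathcal B_+}$ and $\Phi_\pm$-ness split across the decomposition and, in particular, $\rho_{ap}(T)=\rho_{ap}(T_M)\cap\rho_{ap}(T_N)$, $\rho_{\mathcal B_+}(T)=\rho_{\mathcal B_+}(T_M)\cap\rho_{\mathcal B_+}(T_N)$), and Lemma \ref{lema-Riesz}, which controls Riesz-ness of reductions.

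For (i)$\Rightarrow$(ii): a bounded below operator is Kato, so $T$ admits a GKRD$(M,N)$; since $D(0,\epsilon)\subset\rho_{ap}(T_M)$ for some $\epsilon>0$ while $0\in\acc\,\rho_{ap}(T_N)$ (because $\sigma_{ap}(T_N)\subset\sigma(T_N)$ is at most countable, $T_N$ being Riesz), Lemma \ref{lema}(i) yields $0\in\acc(\rho_{ap}(T_M)\cap\rho_{ap}(T_N))=\acc\,\rho_{ap}(T)$, i.e.\ $0\notin\inter\,\sigma_{ap}(T)$. For (ii)$\Rightarrow$(iii): by \eqref{w2} the open set $\S(T)$ is contained in $\sigma_{ap}(T)$, hence in $\inter\,\sigma_{ap}(T)$, so $0\notin\inter\,\sigma_{ap}(T)$ forces $0\notin\S(T)$, that is, $T$ has SVEP at $0$ (this is the identity-theorem argument, as in (ii)$\Rightarrow$(iii) of Theorem \ref{t3}, and needs only the $\sigma_{ap}$ half). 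For (iii)$\Rightarrow$(i): from a GKRD$(M,N)$ the Kato reduction $T_M$ inherits SVEP at $0$ from $T$, and a Kato operator with SVEP at $0$ is bounded below by \cite[Theorem 2.49]{Ai}.

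For the second chain: (i)$\Rightarrow$(iv) holds since $\M(X)\subset\mathcal B_+(X)$. For (iv)$\Rightarrow$(v): $T_M$ upper semi-Browder is in particular upper semi-Fredholm, hence essentially Kato by \cite[Theorem 16.21]{Mu}, so $M=M_1\oplus M_2$ with $M_2$ finite dimensional, $T_{M_1}$ Kato and $T_{M_2}$ nilpotent; then $X=M_1\oplus(M_2\oplus N)$ and $T_{M_2\oplus N}=T_{M_2}\oplus T_N$ is Riesz by Lemma \ref{lema-Riesz}, giving a GKRD, while $\sigma_{\mathcal B_+}(T_N)\subset\sigma_{\mathcal B}(T_N)\subset\{0\}$ (by \cite[Theorem 3.111]{Ai}) together with Lemma \ref{lema}(i) gives $D(0,\epsilon)\setminus\{0\}\subset\rho_{\mathcal B_+}(T)$, i.e.\ $0\notin\acc\,\sigma_{\mathcal B_+}(T)$. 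The step (v)$\Rightarrow$(vi) is immediate. For (vi)$\Rightarrow$(i): a GKRD$(M,N)$ together with $0\notin\inter\,\sigma_{\mathcal B_+}(T)$ gives $0\in\acc\,\rho_{\mathcal B_+}(T)$, and since $\sigma_{\mathcal B_+}(T_N)\subset\{0\}$ while $\rho_{\mathcal B_+}(T)=\rho_{\mathcal B_+}(T_M)\cap\rho_{\mathcal B_+}(T_N)$ by Lemma \ref{lema}(i), this forces $0\in\acc\,\rho_{\mathcal B_+}(T_M)$, whence Proposition \ref{gap0}(ii) gives $T_M$ bounded below. Finally (iv)$\Leftrightarrow$(vii) is the bookkeeping of (vi)$\Leftrightarrow$(vii) in Theorem \ref{t3}: given (iv), let $P$ be the projection of $X$ onto $N$ along $M$; then $P$ commutes with $T$, $TP=0\oplus T_N$ is Riesz, and $T+P=T_M\oplus(I_N+T_N)$ (with $I_N$ the identity on $N$), whose second summand is Browder by \cite[Theorem 3.111]{Ai}, so $T+P$ is upper semi-Browder by Lemma \ref{lema}(i); conversely, given (vii), put $M=N(P)$, $N=R(P)$, so $T_N=(TP)_N$ is Riesz by Lemma \ref{lema-Riesz} and $T_M=(T+P)_M$ is upper semi-Browder by Lemma \ref{lema}(i).

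The only point needing genuine care — rather than verbatim transcription from Theorem \ref{t3} — is the transfer of an interior/accumulation condition at $0$ from $T$ to its Kato summand $T_M$: one must use that the Riesz summand $T_N$ contributes only an at most countable piece of $\sigma_{ap}$ (respectively only the point $0$ of $\sigma_{\mathcal B_+}$) near $0$, so that a hypothesis on $T$ at $0$ carries over to $T_M$ at $0$; once this is in place, Proposition \ref{gap0}(i),(ii) does all the remaining work exactly as its invertible parts did in Theorem \ref{t3}. Everything else, including the adjoint-free simplification of the SVEP step (we now need SVEP only for $T$, so Proposition \ref{decomp} is not even required here), is routine.
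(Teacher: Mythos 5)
Your proposal is correct and is essentially the paper's intended argument: the paper proves Theorem \ref{t1} only by the remark that it follows ``similarly to the proof of Theorem \ref{t3}'' using Proposition \ref{gap0}(i),(ii), and your write-up carries out exactly those substitutions, with the key transfer steps (the Riesz summand contributing only a countable piece of $\sigma_{ap}$, resp.\ only $\{0\}$ to $\sigma_{\mathcal B_+}$, so that Lemma \ref{lema}(i) passes the hypothesis at $0$ from $T$ to $T_M$) handled correctly. The only cosmetic deviation is your (ii)$\Rightarrow$(iii) via $\S(T)\subset\inter\,\sigma_{ap}(T)$, which is just the $\sigma_{ap}$-half of the paper's identity-theorem argument.
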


\begin{theorem} \label{t2}
Let $T\in L(X)$. The following
conditions are equivalent:

\par \noindent {\rm
(i)} There exists $(M,N)\in Red(T)$ such that $ T_M$ is surjective and $ T_N$ is Riesz, that is
 $T\in {\bf g DR{\bf\mathcal{Q}}}(X)$;


\par\noindent {\rm (ii)} $T$ admits a GKRD and
$0\notin{\rm int}\, \sigma_{su}(T)$;

\par\noindent {\rm (iii)} $T$ admits a GKRD and $T^\prime$ has the SVEP at $0$;

\par \noindent {\rm
(iv)} There exists $(M,N)\in Red(T)$ such that $ T_M$ is lower semi-Browder and $ T_N$ is Riesz, that is $T\in {\bf g DR{\mathcal{B}_-}}(X)$;

\par \noindent {\rm (v)} $T$ admits a GKRD  and
$0\notin {\rm acc}\, \sigma_{\mathcal{B}_-}(T)$;

\par \noindent {\rm (vi)} $T$ admits a GKRD  and
$0\notin {\rm int}\, \sigma_{\mathcal{B}_-}(T)$;

\par \noindent {\rm (vii)}
There exists a bounded projection $P$ on $X$ which commutes with $T$
such that $T+P$ is lower semi-Browder and $TP$ is Riesz.
\end{theorem}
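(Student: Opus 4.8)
My plan is to follow the template of the proof of Theorem~\ref{t3} (and of its approximate-point analogue Theorem~\ref{t1}), replacing throughout ``invertible''/``Browder''/``bounded below'' by ``surjective''/``lower semi-Browder''/``surjective'', and the data $\sigma(T),\sigma_{ap}(T),\sigma_{\B}(T),\sigma_{\B_+}(T)$ and $\rho(T),\rho_{ap}(T),\rho_{\B}(T),\rho_{\B_+}(T)$ by $\sigma_{su}(T),\sigma_{\B_-}(T)$ and $\rho_{su}(T),\rho_{\B_-}(T)$. The engine is parts (i) and (ii) of Proposition~\ref{gap0} in their ``surjective'' form, together with the elementary facts that a surjective operator is both Kato and lower semi-Browder (closed range, $R(T_M^n)=M\supseteq N(T_M)$, $\dsc(T_M)=0$) and that the spectrum of a Riesz operator is at most countable with $0$ as its only possible cluster point. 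I would establish the implications (i)$\Rightarrow$(ii)$\Rightarrow$(i), (ii)$\Leftrightarrow$(iii), and (i)$\Rightarrow$(vii)$\Rightarrow$(iv)$\Rightarrow$(v)$\Rightarrow$(vi)$\Rightarrow$(i), which makes all seven conditions equivalent.

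For (i)$\Rightarrow$(ii): from $(M,N)\in Red(T)$ with $T_M$ surjective and $T_N$ Riesz, $T_M$ is Kato so $T$ admits a GKRD, $0\in\inter\rho_{su}(T_M)$ and $0\in\acc\rho(T_N)\subseteq\acc\rho_{su}(T_N)$, whence $0\in\acc(\rho_{su}(T_M)\cap\rho_{su}(T_N))=\acc\rho_{su}(T)$ by Lemma~\ref{lema}(i), i.e.\ $0\notin\inter\sigma_{su}(T)$. For (ii)$\Rightarrow$(i): a GKRD $(M,N)$ with $0\notin\inter\sigma_{su}(T)$ gives $0\in\acc\rho_{su}(T)\subseteq\acc\rho_{su}(T_M)$ by Lemma~\ref{lema}(i), and, $T_M$ being Kato, Proposition~\ref{gap0}(i) makes $T_M$ surjective. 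For (i)$\Rightarrow$(vii) take $P$ the projection of $X$ onto $N$ along $M$: it commutes with $T$, $T+P=T_M\oplus(T_N+I_N)$ with $T_M$ lower semi-Browder and $T_N+I_N$ Browder by \cite[Theorem~3.111]{Ai}, so $T+P$ is lower semi-Browder by Lemma~\ref{lema}(i), and $TP=0\oplus T_N$ is Riesz by Lemma~\ref{lema-Riesz}. For (vii)$\Rightarrow$(iv) set $M=N(P)$, $N=R(P)$, so $(M,N)\in Red(T)$, $T_N=(TP)_N$ is Riesz (Lemma~\ref{lema-Riesz}) and $T_M=(T+P)_M$ is lower semi-Browder (Lemma~\ref{lema}(i)). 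For (iv)$\Rightarrow$(v): $0\in\inter\rho_{\B_-}(T_M)$ and $\sigma_{\B_-}(T_N)\subseteq\{0\}$ by \cite[Theorem~3.111]{Ai}, so $D(0,\epsilon)\setminus\{0\}\subseteq\rho_{\B_-}(T)$ by Lemma~\ref{lema}(i), i.e.\ $0\notin\acc\sigma_{\B_-}(T)$; and since the lower semi-Fredholm operator $T_M$ is essentially Kato \cite[Theorem~16.21]{Mu} one writes $M=M_1\oplus M_2$ with $M_2$ finite dimensional, $T_{M_1}$ Kato, $T_{M_2}$ nilpotent, so $(M_1,M_2\oplus N)$ is a GKRD by Lemma~\ref{lema-Riesz}. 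Then (v)$\Rightarrow$(vi) is immediate, and (vi)$\Rightarrow$(i) runs exactly as (ii)$\Rightarrow$(i) with Proposition~\ref{gap0}(ii) in place of (i).

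The equivalence (ii)$\Leftrightarrow$(iii) uses $\sigma_{su}(T)=\sigma_{ap}(T^\prime)$ and Proposition~\ref{decomp}. If $T$ admits a GKRD then so does $T^\prime$, say $(N^\bot,M^\bot)$ with $T^\prime_{N^\bot}$ Kato and $T^\prime_{M^\bot}$ Riesz; the SVEP at $0$ of $T^\prime$ is inherited by $T^\prime_{N^\bot}$, so by \cite[Theorem~2.49]{Ai} the latter is bounded below, hence $D(0,\epsilon)\subseteq\rho_{ap}(T^\prime_{N^\bot})$, while $D(0,\epsilon)\setminus C\subseteq\rho_{ap}(T^\prime_{M^\bot})$ for an at most countable set $C$; intersecting and using Lemma~\ref{lema}(i) gives $D(0,\epsilon)\setminus C\subseteq\rho_{ap}(T^\prime)=\rho_{su}(T)$, i.e.\ $0\notin\inter\sigma_{su}(T)$; this is (iii)$\Rightarrow$(ii). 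Conversely, by the already proved (ii)$\Rightarrow$(i) one has $T=T_M\oplus T_N$ with $T_M$ surjective and $T_N$ Riesz, so $\sigma_{su}(T)=\sigma_{su}(T_M)\cup\sigma_{su}(T_N)$ meets some disc $D(0,\epsilon)$ in a subset of the countable set $\sigma(T_N)$; a nonzero analytic solution of $(T^\prime-\lambda)f(\lambda)=0$ near $0$ would force an uncountable punctured disc into $\sigma_p(T^\prime)\subseteq\sigma_{ap}(T^\prime)=\sigma_{su}(T)$, so by the identity theorem $T^\prime$ has the SVEP at $0$; this is (ii)$\Rightarrow$(iii).

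I do not expect a genuinely new difficulty beyond Theorem~\ref{t3}; the one point demanding care is the dualization in (ii)$\Leftrightarrow$(iii), where one must first pass to the concrete decomposition in (i) before reasoning about $T^\prime$, and use that a Riesz operator, and hence its adjoint, has countable spectrum and therefore the SVEP everywhere, so that the identity-theorem argument applies to $T^\prime$ at $0$. Alternatively, the whole statement can be obtained by applying Theorem~\ref{t1} to $T^\prime\in L(X^\prime)$ and translating back through Proposition~\ref{decomp} and the identities $\sigma_{ap}(T^\prime)=\sigma_{su}(T)$, $\sigma_{\B_+}(T^\prime)=\sigma_{\B_-}(T)$, together with the observation (implicit in the proof of Proposition~\ref{decomp}) that $T^\prime_{N^\bot}$ and $T^\prime_{M^\bot}$ are similar to $(T_M)^\prime$ and $(T_N)^\prime$; this gives in particular $T\in{\bf g DR{\bf\mathcal{Q}}}(X)$ if and only if $T^\prime\in{\bf g DR{\bf\mathcal{M}}}(X^\prime)$.
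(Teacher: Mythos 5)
Your proof is correct and is essentially the argument the paper intends: the authors state that Theorem~\ref{t2} can be proved ``similarly to the proof of Theorem~\ref{t3}'' using Proposition~\ref{gap0}(i),(ii), and your implication chain carries out exactly that substitution (surjective/lower semi-Browder in place of invertible/Browder, $\sigma_{su}$ and $\sigma_{\mathcal{B}_-}$ in place of $\sigma$ and $\sigma_{\mathcal{B}}$), with the dualization in (ii)$\Leftrightarrow$(iii) handled correctly via Proposition~\ref{decomp} and the identity theorem. Only the closing ``alternative'' route (deducing the whole theorem from Theorem~\ref{t1} applied to $T^\prime$) would require extra care in the predual direction, since a decomposition of $X^\prime$ need not descend to one of $X$; but your main argument does not rely on it.
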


\medskip
The condition that $0\notin{\rm int}\, \sigma_{ap }(T)$    ($0\notin{\rm int}\, \sigma_{su}(T)$) in the statement (ii) in Theorem \ref{t1} (Theorem \ref{t2}) can not be replaced with the stronger condition that $0\notin{\rm acc}\, \sigma_{ap}(T)$ ($0\notin{\rm acc}\, \sigma_{su}(T)$). The example which shows that is a Riesz operator with infinite spectrum. Namely, if $T\in L(X)$ is Riesz with infinite spectrum, then obviously $T$ is generalized Drazin-Riesz invertible, but $0\in{\rm acc}\, \sigma_{ap}(T)={\rm acc}\, \sigma_{su}(T)$ and $0\notin{\rm int}\, \sigma_{ap }(T)={\rm int}\, \sigma_{su }(T)$.


\begin{corollary} Let $T\in L(X)$. If $\lambda_0-T$ admits a GKRD,  then the following statements are equivalent:

(i) $T\ (T^\prime)\ {\rm has\ the\ SVEP\ at\ }\lambda_0$;

(ii) $\lambda_0$ is not an interior point of $\sigma_{ap}(T)\ (\sigma_{su}(T))$;

(iii) $\sigma_{\B_+}(T)\ (\sigma_{\B_-}(T))$ does not cluster at $\lambda_0$;

(iv) $\lambda_0$ is not an interior point of $\sigma_{\B_+}(T)\ (\sigma_{\B_-}(T))$.
\end{corollary}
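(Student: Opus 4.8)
The plan is to obtain the Corollary as an immediate translation of Theorems \ref{t1} and \ref{t2}. I would set $S=\lambda_0 I-T$. By hypothesis $S$ admits a GKRD, so in the list of equivalent conditions of Theorem \ref{t1} applied to $S$ the recurring clause ``$S$ admits a GKRD'' is automatically satisfied and may be deleted; what survives is that the following are pairwise equivalent: $S$ has the SVEP at $0$; $0\notin\inter\,\sigma_{ap}(S)$; $0\notin\acc\,\sigma_{\B_+}(S)$; $0\notin\inter\,\sigma_{\B_+}(S)$ (these are parts (iii), (ii), (v), (vi) of Theorem \ref{t1}).

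Next I would transport each of these statements from $S$ to $T$ via the elementary identities
$$
\sigma_{ap}(S)=\lambda_0-\sigma_{ap}(T),\qquad \sigma_{\B_+}(S)=\lambda_0-\sigma_{\B_+}(T),
$$
which hold because $\mu-S=-\big((\lambda_0-\mu)-T\big)$ and membership in $\M(X)$, respectively in $\B_+(X)$, is unaffected by multiplication by $-1$. Since $z\mapsto\lambda_0-z$ is a homeomorphism of $\CC$ carrying $\lambda_0$ to $0$, the point $0$ is an interior (respectively accumulation) point of $\sigma_{ap}(S)$ iff $\lambda_0$ is an interior (respectively accumulation) point of $\sigma_{ap}(T)$, and likewise for $\sigma_{\B_+}$; here ``$\sigma_{\B_+}(T)$ does not cluster at $\lambda_0$'' is merely a rephrasing of $\lambda_0\notin\acc\,\sigma_{\B_+}(T)$. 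Finally, $S$ has the SVEP at $0$ iff $T$ has the SVEP at $\lambda_0$, directly from the definition, since the equation $(S-\lambda)f(\lambda)=0$ on a disc about $0$ is the equation $(T-(\lambda_0-\lambda))f(\lambda)=0$ on the corresponding disc about $\lambda_0$. Combining these remarks, the four equivalent conditions for $S$ turn into exactly (i), (ii), (iii), (iv) of the Corollary in the unparenthesized form.

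For the parenthesized version I would run the same argument with Theorem \ref{t2} in place of Theorem \ref{t1}, replacing $\sigma_{ap}$ by $\sigma_{su}$, $\B_+$ by $\B_-$, and $T$ by $T^\prime$; the facts needed are the analogous identities $\sigma_{su}(S)=\lambda_0-\sigma_{su}(T)$ and $\sigma_{\B_-}(S)=\lambda_0-\sigma_{\B_-}(T)$, together with $S^\prime=\lambda_0 I-T^\prime$, so that $S^\prime$ has the SVEP at $0$ iff $T^\prime$ has the SVEP at $\lambda_0$. I do not anticipate any genuine obstacle here: all the substance is already contained in Theorems \ref{t1} and \ref{t2}, and the only point that deserves a word of comment is that the standing hypothesis ``$\lambda_0-T$ admits a GKRD'' is precisely what licenses suppressing the ``admits a GKRD'' clause from the equivalent conditions of those theorems; without it the four statements need not be equivalent in general.
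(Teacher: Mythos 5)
Your proposal is correct and follows essentially the same route as the paper, which likewise obtains the corollary directly from the equivalences (ii)$\Longleftrightarrow$(iii)$\Longleftrightarrow$(v)$\Longleftrightarrow$(vi) of Theorems \ref{t1} and \ref{t2} applied to $\lambda_0-T$. The explicit translation between statements about $0$ for $\lambda_0-T$ and statements about $\lambda_0$ for $T$ is routine and is left implicit in the paper.
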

\begin{proof} Follows from the equivalences (ii)$\Longleftrightarrow$(iii)$\Longleftrightarrow$(v)$\Longleftrightarrow$(vi) in Theorem \ref{t1} (Theorem \ref{t2}).
\end{proof}

\begin{theorem} \label{t4}
Let $T\in L(X)$ and $7\le i\le 12$. The following
conditions are equivalent:

\par \noindent {\rm
(i)} There exists $(M,N)\in Red(T)$ such that $ T_M\in {\bf R}_i$ and $T_N$ is Riesz, that is
 $T\in {\bf g DR R_i}(X)$;

\par \noindent {\rm (ii)} $T$ admits a GKRD  and
$0\notin {\rm acc}\, \sigma_{{\bf R}_i}(T)$;

\par\noindent {\rm (iii)} $T$ admits a GKRD and
$0\notin{\rm int}\, \sigma
_{{\bf R}_i}(T)$;

\par \noindent {\rm (iv)}
There exists a bounded projection $P$ on $X$ which commutes with $T$
such that $T+P\in {\bf R}_i$  and $TP$ is Riesz.
\end{theorem}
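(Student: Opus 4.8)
\textit{Proof proposal.} The plan is to imitate the proof of Theorem \ref{t3}, establishing the cycle {\rm (i)}$\Longrightarrow${\rm (ii)}$\Longrightarrow${\rm (iii)}$\Longrightarrow${\rm (i)} together with the equivalence {\rm (i)}$\Longleftrightarrow${\rm (iv)}. Two structural facts will be used throughout: every operator in ${\bf R}_i$ with $7\le i\le 12$ is semi-Fredholm, hence essentially Kato, so it admits a Kato type decomposition by \cite[Theorem 16.21]{Mu}; and if $R\in L(X)$ is Riesz then $\sigma_{{\bf R}_i}(R)\subset\{0\}$ for $7\le i\le 12$, because $R-\lambda$ is Browder — in particular Weyl and Fredholm — for every $\lambda\ne0$ by \cite[Theorem 3.111]{Ai}.

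For {\rm (i)}$\Longrightarrow${\rm (ii)} I would start from $(M,N)\in Red(T)$ with $T_M\in{\bf R}_i$, $T_N$ Riesz. Exactly as in the implication {\rm (vii)}$\Longrightarrow${\rm (viii)} of Theorem \ref{t3}, write $M=M_1\oplus M_2$ with $M_2$ finite dimensional, $T_{M_1}$ Kato and $T_{M_2}$ nilpotent; then $T_{M_2\oplus N}=T_{M_2}\oplus T_N$ is Riesz by Lemma \ref{lema-Riesz}, so $(M_1,M_2\oplus N)$ is a GKRD for $T$. For the spectral part, choose $\epsilon>0$ with $D(0,\epsilon)\subset\rho_{{\bf R}_i}(T_M)$; for $0<|\lambda|<\epsilon$ the summand $T_N-\lambda$ is Browder, hence lies in ${\bf R}_i$ and is Weyl, so $T-\lambda\in{\bf R}_i$ by Lemma \ref{lema} — part {\rm (i)} when $i\le9$ and part {\rm (ii)} when $10\le i\le12$. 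Thus $D(0,\epsilon)\setminus\{0\}\subset\rho_{{\bf R}_i}(T)$ and $0\notin\acc\,\sigma_{{\bf R}_i}(T)$. The implication {\rm (ii)}$\Longrightarrow${\rm (iii)} is immediate. For {\rm (iii)}$\Longrightarrow${\rm (i)}, let $(M,N)\in Red(T)$ realize a GKRD, so $T_M$ is Kato, $T_N$ is Riesz, and $0\in\acc\,\rho_{{\bf R}_i}(T)$; since $T_N-\lambda$ is Weyl for every $\lambda\ne0$, Lemma \ref{lema} (part {\rm (i)} for $i\le9$, part {\rm (iii)} for $i\ge10$) gives $\rho_{{\bf R}_i}(T)\setminus\{0\}\subset\rho_{{\bf R}_i}(T_M)$, hence $0\in\acc\,\rho_{{\bf R}_i}(T_M)$, and Proposition \ref{gap0} (part {\rm (iii)} for $7\le i\le9$, part {\rm (iv)} for $10\le i\le12$) forces $T_M\in{\bf R}_i$.

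Finally {\rm (i)}$\Longleftrightarrow${\rm (iv)} I would handle as in {\rm (vi)}$\Longleftrightarrow${\rm (vii)} of Theorem \ref{t3}. Given {\rm (i)}, take $P$ the projection onto $N$ along $M$: it commutes with $T$, $TP=0\oplus T_N$ is Riesz by Lemma \ref{lema-Riesz}, and $T+P=T_M\oplus(T_N+I)$ where $T_N+I$ is Browder, hence in ${\bf R}_i$, so $T+P\in{\bf R}_i$ by Lemma \ref{lema}. Conversely, given a projection $P$ commuting with $T$ with $T+P\in{\bf R}_i$ and $TP$ Riesz, set $M=N(P)$, $N=R(P)$; then $(M,N)\in Red(T)$, $T_N=(TP)_N$ is Riesz, $(T+P)_N=T_N+I$ is Weyl, and Lemma \ref{lema} (part {\rm (i)} for $i\le9$, part {\rm (iii)} for $i\ge10$) yields $T_M=(T+P)_M\in{\bf R}_i$, which is {\rm (i)}.

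The main obstacle is the Weyl range $i\in\{10,11,12\}$, where Lemma \ref{lema}(i) is not available and one has only the one-sided implications {\rm (ii)}, {\rm (iii)} of that lemma. The way around it — used in every step above — is that the relevant translates of the Riesz summand, namely $T_N-\lambda$ for $\lambda\ne0$ and $T_N+I$, are Browder and hence Weyl, which is exactly the hypothesis needed to apply Lemma \ref{lema}(iii); this lets one transfer membership in ${\bf R}_i$ between $T$ and the reduction $T_M$ on punctured neighbourhoods of $0$, and that is all conditions {\rm (ii)}--{\rm (iv)} ever require.
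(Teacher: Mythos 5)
Your proposal is correct and follows essentially the same route as the paper's proof: the same cycle (i)$\Rightarrow$(ii)$\Rightarrow$(iii)$\Rightarrow$(i) plus (i)$\Leftrightarrow$(iv), the same construction of the GKRD from the Kato type decomposition of $T_M$, and the same device of using Lemma \ref{lema}(iii) with the Weyl translates $T_N-\lambda$ and $T_N+I$ to handle the semi-Weyl cases $i\in\{10,11,12\}$.
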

\begin{proof} (i)$\Longrightarrow$(ii) Suppose that there exists $(M,N)\in Red(T)$ such that $ T_M\in {\bf R}_i$ and $T_N$ is Riesz.
As in the proof of the implication (vii)$\Longrightarrow$(viii) in Theorem \ref{t3} it follows that $T$ admits a GKRD.

   Since ${\bf R}_i$ is open, from $ T_M\in {\bf R}_i$ it follows that there exists $\epsilon>0$ such that $D(0,\epsilon)\subset\rho_{{\bf R}_i}(T_M)$. Since $T_N$ is Riesz,  according to \cite[Theorem 3.111]{Ai}, it follows that  $\sigma_{{\bf R}_i}(T_N) \subset\{0\}$ and hence,  $D(0,\epsilon)\setminus\{0\}\subset\rho_{{\bf R}_i}(T_M)\cap\rho_{{\bf R}_i}(T_N)$. According to Lemma \ref{lema}{\rm (i)},{\rm (ii)}, $\rho_{{\bf R}_i}(T_M)\cap\rho_{{\bf R}_i}(T_N)\subset\rho_{{\bf R}_i}(T)$ and so, $D(0,\epsilon)\setminus\{0\}\subset\rho_{{\bf R}_i}(T)$. Therefore, $0\notin \acc\, \sigma_{{\bf R}_i}(T)$.

(ii)$\Longrightarrow$(iii) Obvious.

(iii)$\Longrightarrow$(i)
 Suppose that $T$ admits a
GKRD and $0\notin{\rm int}\, \sigma_{\Phi_+}(T)$. Then there exists $(M,N) \in
Red(T)$ such that $T_M$ is Kato and $T_N$ is Riesz and $0  \in \acc \, \rho_{\Phi_+}(T)$. According to Lema
\ref{lema}{\rm (i)}, it follows that $0  \in \acc \,
\rho_{\Phi_+}(T_M)$. From Proposition \ref{gap0} {\rm (iii)}  it follows
that $T_M$
 is upper semi-Fredholm.
The cases $i=8$ and $i=9$ can be proved similarly.

Suppose that $T$ admits a GKRD and $0  \in \acc \, \rho_{\W_+}(T)$.
Then  there exists $(M,N) \in Red(T)$ such that $T_M$ is Kato and
$T_N$ is Riesz. We show that $0  \in \acc \,
\rho_{\W_+}(T_M)$. Let $\epsilon>0$. From $0  \in \acc \,
\rho_{\W_+}(T)$ it follows that there exists $\lambda\in\CC$ such
that $0<|\lambda|<\epsilon$  and $T-\lambda\in\W_+(X)$. As $T_N$ is
Riesz, $T_N- \lambda$ is Fredholm of index zero, and so, according to
Lema \ref{lema}{\rm (iii)}, we conclude that
$T_M-\lambda\in\W_+(M)$, that is $\lambda\in \rho_{\W_+}(T_M)$.
Therefore, $0  \in \acc \, \rho_{\W_+}(T_M)$ and  from Proposition
\ref{gap0} {\rm (iv)}  it follows that $T_M$
 is upper semi-Weyl,  and so $T \in {\bf g DR \W_+}(X)$. The cases $i=11$ and $i=12$ can be proved similarly.

{\rm (i)} $\Longrightarrow$ {\rm (iv)}:  Suppose that there exists $(M,N) \in Red(T)$ such that
$T_M\in {\bf R}_i$ and $T_N$ is Riesz. Let $P \in L(X)$
be a projection such that $N(P)=M$ and $R(P)=N$.  Then $TP=PT$ and since  $TP=(TP)_M\oplus(TP)_N=0\oplus T_N$ and $T_N$ is Riesz, from Lemma \ref{lema-Riesz} it follows that $TP$ is Riesz.
Also from the fact that
 $T_N$ is Riesz it follows that $\sigma_{{\bf R}_i}(T_N)\subset\{0\}$ and so, $(T+P)_N=T_N+I_N \in
{\bf R}_i$, where $I_N$ is identity on $N$. Since  $(T+P)_M=T_M\in {\bf R}_i $, we have  that    $T+P\in {\bf R}_i $
by Lemma \ref{lema}{\rm (i)}, {\rm (ii)}.

\smallskip

 {\rm (iv)} $\Longrightarrow$ {\rm (i)}: Suppose that
there exists a projection $P \in L(X)$ that commutes with $T$ such
that $T+P\in {\bf R}_i $   and $TP$ is Riesz. For $M=N(P)$ and $N=R(P)$ we have that $(M,N)\in Red(T)$ and $T_N=(TP)_N$ is Riesz. For $i\in\{7,8,9\}$ from Lemma
\ref{lema}{\rm (i)}  it follows that  $T_M=(T+P)_M\in {\bf R}_i $. Suppose that $i\in\{10,11,12\}$. Since $T_N$ is Riesz, it follows that $T_N+I_N$ is Weyl
\ and so, from  $T+P=(T+P)_M\oplus (T+P)_N=T_M\oplus (T_N+I_N)$, according to Lemma \ref{lema}{\rm (iii)}, it follows that $T_M\in  {\bf R}_i $.
\end{proof}

The following corollary is an improvement of Corollary 3.4 in \cite{ZC}.
\begin{corollary} \label{t4-GKt0} Let $T\in L(X)$ and $7\le i\le 12$. If $\lambda_0-T$ admits a GKRD,  then the following statements are equivalent:

(i) $\lambda_0$ is not an interior point of $\sigma_{{\bf R}_i}(T)$;

(ii) $\sigma_{{\bf R}_i}(T)$ does not cluster at $\lambda_0$.
\end{corollary}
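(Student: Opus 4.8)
The plan is to deduce Corollary \ref{t4-GKt0} directly from Theorem \ref{t4}, in complete analogy with the way the previous corollary was obtained from Theorems \ref{t1} and \ref{t2}. The key observation is that admitting a GKRD is a \emph{translation-invariant} notion in the following sense: if $\lambda_0-T$ admits a GKRD $(M,N)$, then writing $S=\lambda_0-T$ we have $(M,N)\in Red(S)$ with $S_M$ Kato and $S_N$ Riesz; equivalently, $(M,N)\in Red(T)$, $(\lambda_0-T)_M=\lambda_0-T_M$ is Kato and $(\lambda_0-T)_N=\lambda_0-T_N$ is Riesz. Moreover, for any $\mu$ one has $\sigma_{{\bf R}_i}(T)-\lambda_0$ related to the $0$-based spectra of $\lambda_0-T$ by $\mu\in\sigma_{{\bf R}_i}(T)\iff \mu-\lambda_0\in\sigma_{{\bf R}_i}(\lambda_0-T)\iff 0\in\sigma_{{\bf R}_i}((\mu-\lambda_0)-(\lambda_0-T))$; in particular $\lambda_0\in\inter\sigma_{{\bf R}_i}(T)$ iff $0\in\inter\sigma_{{\bf R}_i}(\lambda_0-T)$, and $\sigma_{{\bf R}_i}(T)$ clusters at $\lambda_0$ iff $\sigma_{{\bf R}_i}(\lambda_0-T)$ clusters at $0$.

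With these translations in hand, the argument is short. Assume $\lambda_0-T$ admits a GKRD, and set $S=\lambda_0-T$. Then $S$ admits a GKRD. Now: statement (i), ``$\lambda_0\notin\inter\sigma_{{\bf R}_i}(T)$'', is by the above equivalent to ``$0\notin\inter\sigma_{{\bf R}_i}(S)$''; since $S$ admits a GKRD, Theorem \ref{t4} (equivalence (ii)$\Longleftrightarrow$(iii)) gives that this holds iff $0\notin\acc\,\sigma_{{\bf R}_i}(S)$, which by the translation is precisely statement (ii), ``$\sigma_{{\bf R}_i}(T)$ does not cluster at $\lambda_0$''. Hence (i)$\Longleftrightarrow$(ii), which is the claim. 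One should remark that here $7\le i\le 12$, so ${\bf R}_i$ ranges over $\Phi_+,\Phi_-,\Phi,\W_+,\W_-,\W$; the hypotheses of Theorem \ref{t4} are met for exactly this range.

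There is essentially no obstacle: the only thing to be careful about is the bookkeeping of the translation $S=\lambda_0-T$ and checking that ``$T$ admits a GKRD at $\lambda_0$'' is literally the statement ``$S$ admits a GKRD (at $0$)'', which is immediate from the definition of GKRD applied to $S$ and from $\lambda_0-T_M$ Kato $\iff T_M$ Kato${}$-after-shift (trivially true) and $\lambda_0-T_N$ Riesz $\iff T_N$ Riesz-after-shift. Everything else is a verbatim citation of the equivalence (ii)$\Longleftrightarrow$(iii) of Theorem \ref{t4}. I would therefore write the proof as: ``Follows from the equivalence (ii)$\Longleftrightarrow$(iii) in Theorem \ref{t4} applied to the operator $\lambda_0-T$.''
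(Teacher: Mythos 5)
Your proof is correct and is exactly the paper's argument: the authors also dispose of this corollary by citing the equivalence (ii)$\Longleftrightarrow$(iii) of Theorem \ref{t4} applied to $\lambda_0-T$. One cosmetic remark: with the paper's convention $\sigma_{{\bf R}_i}(S)=\{\mu: S-\mu\notin{\bf R}_i\}$ the correct translation is $\sigma_{{\bf R}_i}(\lambda_0-T)=\lambda_0-\sigma_{{\bf R}_i}(T)$ (so $\mu\in\sigma_{{\bf R}_i}(T)\iff\lambda_0-\mu\in\sigma_{{\bf R}_i}(\lambda_0-T)$, not $\mu-\lambda_0$), but since reflection is a homeomorphism this does not affect the interior/accumulation statements you actually use.
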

\begin{proof}  Follows from the equivalence {\rm (ii)}$\Longleftrightarrow${\rm (iii)} in Theorems \ref{t4}.
\end{proof}
\begin{corollary} \label{t4-GKt} Let $T\in L(X)$ and let $0\in\partial\sigma_{{\bf R}_i}(T), \, 1 \leq i \leq 12$. Then $T$ admits a
generalized Kato-Riesz decomposition if and only if $T$ belongs to  ${\bf gDRR}_i(X)$.
\end{corollary}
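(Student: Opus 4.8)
The plan is to read the corollary off directly from the four structure theorems already established, namely Theorems~\ref{t1}, \ref{t2}, \ref{t3} and \ref{t4}. Taken together, these assert that for every index $i$ with $1\le i\le 12$,
$$
T\in{\bf gDRR}_i(X)\quad\Longleftrightarrow\quad T\text{ admits a GKRD and }0\notin{\rm int}\,\sigma_{{\bf R}_i}(T).
$$
Indeed: for $i=1$ this is the equivalence (i)$\Longleftrightarrow$(ii) of Theorem~\ref{t1} together with $\sigma_{{\bf R}_1}(T)=\sigma_{ap}(T)$; for $i=2$ it is (i)$\Longleftrightarrow$(ii) of Theorem~\ref{t2} with $\sigma_{{\bf R}_2}(T)=\sigma_{su}(T)$; for $i=3$ it is (i)$\Longleftrightarrow$(ii) of Theorem~\ref{t3} with $\sigma_{{\bf R}_3}(T)=\sigma(T)$; for $i=4$ and $i=5$ it is, respectively, (iv)$\Longleftrightarrow$(vi) of Theorem~\ref{t1} and (iv)$\Longleftrightarrow$(vi) of Theorem~\ref{t2}; for $i=6$ it is (vii)$\Longleftrightarrow$(ix) of Theorem~\ref{t3}; and for $7\le i\le 12$ it is (i)$\Longleftrightarrow$(iii) of Theorem~\ref{t4}.

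Granting the displayed equivalence, the proof of the corollary is a one-line topological observation. If $T\in{\bf gDRR}_i(X)$, then by the equivalence $T$ admits a GKRD, and the hypothesis $0\in\partial\sigma_{{\bf R}_i}(T)$ is not even used here. Conversely, assume $T$ admits a GKRD. Since $\partial S\cap{\rm int}\,S=\emptyset$ for every subset $S\subseteq\CC$, the assumption $0\in\partial\sigma_{{\bf R}_i}(T)$ yields $0\notin{\rm int}\,\sigma_{{\bf R}_i}(T)$, so the right-hand side of the equivalence is fulfilled and therefore $T\in{\bf gDRR}_i(X)$. This proves the corollary.

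There is no substantial obstacle: all the analytic content already sits in Theorems~\ref{t1}--\ref{t4}, and the corollary merely records that under the boundary hypothesis the extra requirement ``$0\notin{\rm int}\,\sigma_{{\bf R}_i}(T)$'' is automatically satisfied and may be dropped. The only care needed is the bookkeeping that pairs each of the twelve indices with the correct equivalence among the four theorems, together with the identifications $\sigma_{{\bf R}_1}(T)=\sigma_{ap}(T)$, $\sigma_{{\bf R}_2}(T)=\sigma_{su}(T)$ and $\sigma_{{\bf R}_3}(T)=\sigma(T)$. (Alternatively, the implication $T\in{\bf gDRR}_i(X)\Rightarrow T$ admits a GKRD can be checked directly: every operator in ${\bf R}_i$ is essentially Kato, hence of Kato type, so it splits as a Kato operator plus a nilpotent operator on a finite-dimensional space, and by Lemma~\ref{lema-Riesz} the nilpotent summand can be absorbed into the Riesz part, producing a GKRD for $T$.)
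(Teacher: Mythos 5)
Your proof is correct and follows essentially the same route as the paper: the corollary is read off from the equivalences ``$T\in{\bf gDRR}_i(X)\Leftrightarrow T$ admits a GKRD and $0\notin{\rm int}\,\sigma_{{\bf R}_i}(T)$'' contained in Theorems~\ref{t1}--\ref{t4}, combined with the observation that $0\in\partial\sigma_{{\bf R}_i}(T)$ forces $0\notin{\rm int}\,\sigma_{{\bf R}_i}(T)$. Your index-by-index bookkeeping is in fact slightly more careful than the paper's one-line citation (which omits Theorem~\ref{t2} from its list), but there is no substantive difference.
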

\begin{proof}  Follows from the equivalences {\rm (i)}$\Longleftrightarrow${\rm (ii)} in Theorems \ref{t3}, \ref{t1}, \ref{t4} and the equivalence
{\rm (i)}$\Longleftrightarrow${\rm (iii)} in Theorem \ref{t4}.
\end{proof}

\medskip

Let $T\in L(X)$ be generalized Drazin-Riesz invertible  operator $T$, that is, there exists $(M,N)\in Red(T)$ such that $ T_M$ is invertible   and $ T_N$ is Riesz. For the operator $T_N$ we shall say that it is a Riesz part of $T$.

It is well known that a  Riesz operator with infinite spectrum has the property that the sequence of its Riesz points converges to $0$. In the following proposition we show that a generalized Drazin-Riesz invertible operator $T$,  which Riesz part $T_N$ has infinite spectrum, has the same property: there exists a sequence of nonzero Riesz points of $T$ which converges to $0$. Moreover, it holds the converse: if $T$ admits a GKRD and there exists a sequence of nonzero Riesz points of $T$ which converges to $0$, then $T$ is generalized Drazin-Riesz operator  which Riesz part  has infinite spectrum.

\begin{proposition}\label{Riesz}
Let $T \in L(X)$. The following statements are equivalent:\par

\medskip

\noindent {\rm (i)}  $T=T_M \oplus
T_N$ where  $T_M$ is invertible and  $T_N$ is Riesz with infinite
spectrum;\par

\smallskip

\noindent {\rm (ii)} $T$ admits a GKRD and there exists a sequence of nonzero Riesz points of $T$ which converges to $0$.
\end{proposition}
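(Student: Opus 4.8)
The plan is to prove the two implications separately. The only substantial tool needed is Proposition~\ref{gap0}(ii), which upgrades the hypothesis ``$T_M$ is Kato and $0\in\acc\,\rho_{\mathcal B}(T_M)$'' to ``$T_M$ is invertible''; everything else is the direct-sum bookkeeping of Lemma~\ref{lema}(i) for the Browder class ${\bf R}_6=\mathcal B(X)$ (note $1\le 6\le 9$, so this case is covered), together with the standard fact (\cite[Theorem~3.111]{Ai}) that a Riesz operator $S$ satisfies $\sigma_{\mathcal B}(S)\subseteq\{0\}$, so that every nonzero point of $\sigma(S)$ is an isolated Riesz point of $S$. Throughout I use that a Riesz point of $T$ is by definition a point $\lambda\in\sigma(T)$ with $T-\lambda$ Browder.

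For (i)$\Rightarrow$(ii) I would argue as follows. Assume $T=T_M\oplus T_N$ with $T_M$ invertible and $\sigma(T_N)$ infinite. An invertible operator is Kato, hence $(M,N)$ is already a GKRD and it only remains to manufacture the sequence. Since $\sigma_{\mathcal B}(T_N)\subseteq\{0\}$, every point of $\sigma(T_N)\setminus\{0\}$ is a Riesz point of $T_N$, hence isolated in $\sigma(T_N)$, so $\acc\,\sigma(T_N)\subseteq\{0\}$; as $\sigma(T_N)$ is compact and infinite it must accumulate at $0$, and I can choose distinct $\mu_k\in\sigma(T_N)\setminus\{0\}$ with $\mu_k\to 0$. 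For $k$ so large that $|\mu_k|<\|T_M^{-1}\|^{-1}$ the operator $T_M-\mu_k$ is invertible, hence Browder, while $T_N-\mu_k$ is Browder because $\mu_k\notin\sigma_{\mathcal B}(T_N)$; by Lemma~\ref{lema}(i) with $i=6$, $T-\mu_k=(T_M-\mu_k)\oplus(T_N-\mu_k)$ is Browder, and $\mu_k\in\sigma(T_N)\subseteq\sigma(T)$, so each $\mu_k$ is a nonzero Riesz point of $T$ and $\mu_k\to 0$.

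For (ii)$\Rightarrow$(i) I would take a GKRD $(M,N)$ for $T$ (so $T_M$ is Kato, $T_N$ is Riesz) and a sequence $(\lambda_k)$ of nonzero Riesz points of $T$ with $\lambda_k\to 0$. Each $\lambda_k$ lies in $\rho_{\mathcal B}(T)$, and Lemma~\ref{lema}(i) for $i=6$ gives $\rho_{\mathcal B}(T)=\rho_{\mathcal B}(T_M)\cap\rho_{\mathcal B}(T_N)$, whence $\lambda_k\in\rho_{\mathcal B}(T_M)$ and therefore $0\in\acc\,\rho_{\mathcal B}(T_M)$. Since $T_M$ is Kato, Proposition~\ref{gap0}(ii) forces $T_M$ to be invertible, so $T=T_M\oplus T_N$ with $T_M$ invertible and $T_N$ Riesz; it remains to see that $\sigma(T_N)$ is infinite. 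Invertibility of $T_M$ gives $\epsilon>0$ with $D(0,\epsilon)\cap\sigma(T_M)=\emptyset$, so for large $k$ we have $\lambda_k\in D(0,\epsilon)$ and, since $\lambda_k\in\sigma(T)=\sigma(T_M)\cup\sigma(T_N)$, in fact $\lambda_k\in\sigma(T_N)$. A sequence of nonzero complex numbers converging to $0$ is eventually inside every punctured disc about $0$, hence takes infinitely many distinct values, so $\sigma(T_N)$ is infinite and (i) holds with the decomposition $(M,N)$.

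I do not expect a genuine obstacle: the argument is a short combination of Proposition~\ref{gap0}(ii) with the Browder direct-sum identity from Lemma~\ref{lema}(i). The points that need a little care are only checking that $\mathcal B(X)$ is among the classes to which Lemma~\ref{lema}(i) applies, the fact that invertible operators are Browder, and the elementary conversion of a null sequence of nonzero scalars into an infinite subset of $\sigma(T_N)$; none of these is delicate.
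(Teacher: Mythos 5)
Your proof is correct and follows essentially the same route as the paper: the implication (ii)$\Rightarrow$(i) is identical (Lemma \ref{lema}(i) for the Browder class to pass from $\rho_{\mathcal B}(T)$ to $\rho_{\mathcal B}(T_M)$, then Proposition \ref{gap0}(ii), then the observation that the $\lambda_k$ eventually lie in $\sigma(T_N)$ and take infinitely many values). In (i)$\Rightarrow$(ii) you verify directly that $T-\mu_k$ is Browder via the direct-sum lemma, where the paper instead cites Theorem \ref{t3} to get $0\notin\acc\,\sigma_{\mathcal B}(T)$; this is only a cosmetic difference.
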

\begin{proof}
{\rm (i)} $\Longrightarrow$ {\rm (ii)}: Suppose that $T=T_M \oplus
T_N$ where  $T_M$ is invertible and  $T_N$ is Riesz with infinite
spectrum. Then $T$ admits a GKRD$(M,N)$ and
$\sigma(T_N)=\{0, \mu_1, \mu_2, \ldots\}$ where $\mu_n$, $n\in\NN$,  are nonzero Riesz points of $T_N$   and
\begin{equation}\label{MM}
  \lim_{n \to \infty}
\mu_n=0.
\end{equation}
 According to Theorem \ref{t3} we have  that  $0 \not \in \acc
\, \sigma_{\bf \B}(T)$, i.e. there exists $\epsilon>0$ such that $\mu \not
\in \sigma_{\bf \B}(T)$ for $0<|\mu|<\epsilon$. From (\ref{MM}) it follows that there exists $n_0 \in
\NN$ such that $0<|\mu_n|<\epsilon$ for $n\ge n_0$. Hence $\mu_n \in \sigma(T) \setminus
\sigma_{\bf \B}(T)$ for all $n\ge n_0$ and, since  the set $\sigma(T) \setminus
\sigma_{\bf \B}(T)$ is exactly the set of the Riesz points of $T$, we see that $(\mu_n)_{n=n_0}^{\infty}$ is the sequence of nonzero Riesz points of $T$ which converges to $0$.
\par

\smallskip

{\rm (ii)} $\Longrightarrow$ {\rm (i)}: Suppose that $T=T_M \oplus
T_N$ where $T_M$ is Kato, $T_N$ is Riesz  and let $(\lambda_n)$ is the sequence of nonzero  Riesz points of $T$ such
that $0=\lim_{n \to \infty} \lambda_n$. Since  $\lambda_n
\in \rho_{\bf \B}(T)$ for all $n \in \NN$, it follows that $0 \in \acc \,
\rho_{\bf \B}(T)$.  As in the proof of Theorem \ref{t3} we conclude that $T_M$ is invertible. Thus there exists an $\epsilon>0$ such that $D(0,\epsilon)\subset \rho(T_M)$ and there exists $n_0\in\NN$ such that $\lambda_n\in  D(0,\epsilon)$ for all $n\ge n_0$. Consequently, $\lambda_n\notin \sigma(T_M)$ for all $n\ge n_0$ and since $\lambda_n\in \sigma(T)=\sigma(T_M)\cup \sigma(T_N)$, it follows that $\lambda_n\in \sigma(T_N)$ for all $n\ge n_0$. Therefore, the spectrum of $T_N$ is infinite.
\end{proof}
\begin{corollary}
Let $T \in L(X)$ be generalized Drazin-Riesz invertible and  let $0\in{\rm acc}\, \sigma(T)$. Then
there exists a sequence of nonzero Riesz points
of $T$ which converges to $0$.
\end{corollary}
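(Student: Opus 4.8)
The plan is to derive this directly from the structure theorem for generalized Drazin-Riesz invertible operators (Theorem \ref{t3}) together with Proposition \ref{Riesz}. Since $T$ is generalized Drazin-Riesz invertible, the equivalence (iv)$\Longleftrightarrow$(i) in Theorem \ref{t3} gives a pair $(M,N)\in Red(T)$ with $T_M$ invertible and $T_N$ Riesz, so $T=T_M\oplus T_N$ and $\sigma(T)=\sigma(T_M)\cup\sigma(T_N)$.

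Next I would check that the Riesz part $T_N$ has infinite spectrum. Because $T_M$ is invertible, $0\notin\sigma(T_M)$, and $\sigma(T_M)$ is closed, so there is $\epsilon>0$ with $D(0,\epsilon)\cap\sigma(T_M)=\emptyset$; hence inside $D(0,\epsilon)$ we have $\sigma(T)=\sigma(T_N)$. Since by hypothesis $0\in\acc\,\sigma(T)$, every punctured disc $D(0,\delta)\setminus\{0\}$ with $\delta\le\epsilon$ meets $\sigma(T)$, hence meets $\sigma(T_N)$; therefore $0\in\acc\,\sigma(T_N)$, and in particular $\sigma(T_N)$ is infinite. Thus $T=T_M\oplus T_N$ with $T_M$ invertible and $T_N$ Riesz with infinite spectrum, i.e.\ condition (i) of Proposition \ref{Riesz} holds.

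Finally I would invoke the implication (i)$\Longrightarrow$(ii) of Proposition \ref{Riesz}, which yields precisely a sequence of nonzero Riesz points of $T$ converging to $0$. This completes the argument. There is essentially no serious obstacle here; the only point requiring a line of care is the inheritance of accumulation at $0$ from $\sigma(T)$ to $\sigma(T_N)$, which is handled by using the invertibility of $T_M$ to separate $\sigma(T_M)$ from a neighbourhood of $0$, as above.
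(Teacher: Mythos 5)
Your argument is correct and follows essentially the same route as the paper: decompose $T=T_M\oplus T_N$ via Theorem \ref{t3}, observe that $0\in\acc\,\sigma(T)$ forces $\sigma(T_N)$ to be infinite (the paper states this more tersely, while you justify it via the invertibility of $T_M$), and then apply Proposition \ref{Riesz}. No issues.
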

\begin{proof}
 According to
Theorem \ref{t3} it follows that $T=T_M \oplus T_N$ with $T_M$
invertible and $T_N$ Riesz.  Since $0\in{\rm acc}\, \sigma(T)$, it follows that $0\in{\rm acc}\, \sigma_N(T)$ and so, $\sigma_N(T)$ is infinite.  Applying Proposition \ref{Riesz}  we
obtain that there exists a sequence of nonzero Riesz points of $T$ which converges to $0$.
\end{proof}

\begin{theorem} \label{calculus1+}
Let $T \in L(X)$ and let $f$ be a
complex analytic function in a neighborhood of $\sigma(T)$.
If $T\in {\bf g DR R}_i(X)$ and $f^{-1}(0) \cap \sigma_{\bf{R}_i}(T)=\{0\}$, then $f(T)\in {\bf g DR R}_i(X)$, $1\le i\le 12$.
\end{theorem}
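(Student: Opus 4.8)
The plan is to show that the very decomposition that exhibits $T\in{\bf g DR R}_i(X)$ also exhibits $f(T)\in{\bf g DR R}_i(X)$. Fix $(M,N)\in Red(T)$ with $T_M\in{\bf R}_i$ and $T_N$ Riesz. Since $\sigma(T_M)\cup\sigma(T_N)=\sigma(T)$, the function $f$ is analytic on neighbourhoods of $\sigma(T_M)$ and of $\sigma(T_N)$; the analytic functional calculus then gives $(M,N)\in Red(f(T))$ and $f(T)=f(T_M)\oplus f(T_N)$, and from $0\in f^{-1}(0)\cap\sigma_{{\bf R}_i}(T)=\{0\}$ we read off $f(0)=0$. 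It therefore suffices to prove that $f(T_N)$ is Riesz and that $f(T_M)\in{\bf R}_i$, for then $f(T)\in{\bf g DR R}_i(X)$ by definition.

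First I would handle the Riesz summand. Since $T_N$ is Riesz we have $\sigma_{\Phi}(T_N)\subseteq\{0\}$, so by the spectral mapping theorem for the Fredholm spectrum and $f(0)=0$ we get $\sigma_{\Phi}(f(T_N))\subseteq f(\sigma_{\Phi}(T_N))\subseteq\{0\}$, i.e.\ $f(T_N)-\mu\in\Phi(X)$ for every $\mu\ne0$, i.e.\ $f(T_N)$ is Riesz. (The degenerate case in which $N$ is finite-dimensional, so that $\sigma_\Phi(T_N)=\emptyset$, is trivial.)

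For the ${\bf R}_i$ summand I would first verify that $\sigma_{{\bf R}_i}(T_M)\subseteq\sigma_{{\bf R}_i}(T)$. For $1\le i\le9$ this is immediate from Lemma \ref{lema}(i). For $10\le i\le12$, note that $T_N$ being Riesz makes $T_N-\lambda$ Browder, hence Weyl, for every $\lambda\ne0$ (\cite[Theorem 3.111]{Ai}), so Lemma \ref{lema}(iii) applied to $T-\lambda$ shows that $\lambda\in\rho_{{\bf R}_i}(T)$ implies $\lambda\in\rho_{{\bf R}_i}(T_M)$ for $\lambda\ne0$, while $0\notin\sigma_{{\bf R}_i}(T_M)$ because $T_M\in{\bf R}_i$; hence $\sigma_{{\bf R}_i}(T_M)\subseteq\sigma_{{\bf R}_i}(T)$. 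Consequently $f^{-1}(0)\cap\sigma_{{\bf R}_i}(T_M)\subseteq f^{-1}(0)\cap\sigma_{{\bf R}_i}(T)=\{0\}$, and since $0\notin\sigma_{{\bf R}_i}(T_M)$ this set is empty, that is $0\notin f(\sigma_{{\bf R}_i}(T_M))$. The inclusion $\sigma_{{\bf R}_i}(f(T_M))\subseteq f(\sigma_{{\bf R}_i}(T_M))$ then yields $0\notin\sigma_{{\bf R}_i}(f(T_M))$, i.e.\ $f(T_M)\in{\bf R}_i$, which completes the argument.

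The point that needs most care is precisely the spectral mapping inclusion $\sigma_{{\bf R}_i}(f(S))\subseteq f(\sigma_{{\bf R}_i}(S))$ for $10\le i\le12$: the full spectral mapping theorem is known to fail for the semi-Weyl and Weyl spectra, so only this one inclusion is available and it must be used as such. I would prove it by the factorisation $f(S)-\mu=c\,(S-\alpha_1)^{m_1}\cdots(S-\alpha_k)^{m_k}\,h(S)$ (for non-constant $f$; the constant case is immediate), where $h(S)$ is invertible and $\alpha_1,\dots,\alpha_k$ are the zeros of $f-\mu$ in $\sigma(S)$: if $\mu\notin f(\sigma_{{\bf R}_i}(S))$ then $f(\alpha_j)=\mu$ forces $\alpha_j\notin\sigma_{{\bf R}_i}(S)$, i.e.\ $S-\alpha_j\in{\bf R}_i$, and since ${\bf R}_i$ is stable under products of commuting members (the index being additive for the Fredholm-type classes) and under multiplication by invertibles, $f(S)-\mu\in{\bf R}_i$. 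For $1\le i\le9$ the corresponding inclusions are classical, indeed equalities. The remaining verifications --- that $(M,N)\in Red(f(T))$, that $f(T)|_M=f(T_M)$, and the finite-dimensional edge case in the second paragraph --- are routine.
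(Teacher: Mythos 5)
Your proof is correct and follows essentially the same route as the paper's: reduce $f(T)=f(T_M)\oplus f(T_N)$ by the same pair $(M,N)$, get $f(T_N)$ Riesz from $f(0)=0$, and get $f(T_M)\in{\bf R}_i$ from $0\notin f(\sigma_{{\bf R}_i}(T_M))$ together with the spectral mapping theorem for $1\le i\le 9$ and the one-sided inclusion $\sigma_{{\bf R}_i}(f(T_M))\subseteq f(\sigma_{{\bf R}_i}(T_M))$ for the Weyl-type classes (which the paper cites from Rako\v cevi\'c rather than reproving). Your explicit justification via Lemma \ref{lema}(iii) of the inclusion $\sigma_{{\bf R}_i}(T_M)\subseteq\sigma_{{\bf R}_i}(T)$ for $10\le i\le 12$ is a welcome detail that the paper leaves implicit.
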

\begin{proof} We shall prove the assertion for the case $i=4$ and $i=10$. Suppose that $T\in {\bf g DR R}_4(X)$. Then, according to Theorem \ref{t3}, $T$ is  generalized Drazin-Riesz bounded below and there exists $(M,N) \in Red(T)$
such that $T_M$ is bounded below and $T_N$ is Riesz. The pair $(M,N)$
completely reduces $(\lambda-T)^{-1}$ for every $\lambda \in
\rho(T)$ and so,
$f(T)=\frac{1}{2\pi i}  \int\limits_{\gamma} f(\lambda)(\lambda-T)^{-1}d\lambda$, where $\gamma$ is a contour
surrounding $\sigma(T)$ and which lies in the domain of $f$,  is also reduced by the pair $(M,N)$, 
  $f(T)_M=f(T_M)$ and $f(T)_N=f(T_N)$, and consequently
$f(T)=f(T_M) \oplus f(T_N)$.

Suppose also that
$f^{-1}(0) \cap \sigma_{\mathcal{B}_+}(T)=\{0\}$.
Using the fact that $0 \not \in \sigma_{\mathcal{B}_+}(T_M) \subset
\sigma_{\mathcal{B}_+}(T)$, we obtain
$0 \not \in f(\sigma_{\mathcal{B}_+}(T_M))$. According to the spectral
mapping theorem it follows $0 \not \in
f(\sigma_{\mathcal{B}_+}(T_M))=\sigma_{\mathcal{B}_+}(f(T_M))$ \cite[Theorem 3.4]{Voki}, so
$f(T_M)$ is upper semi-Browder. Since $f(0)=0$, it follows that  $f(T_N)$ is Riesz by
\cite[Theorem 3.113 (i)]{Ai}.  Consequently,  $f(T)\in {\bf g DR R}_4(X)$, i.e. $f(T)$ is generalized Drazin-Riesz bounded below.

The  cases for $i=1,2,3,5,6,7,8,9$ can be proved similarly.

Suppose that $T$ is   generalized Drazin-Riesz upper semi-Weyl and
$f^{-1}(0) \cap \sigma_{{\W}_+}(T)=\{0\}$. Then there exists $(M,N) \in Red(T)$
such that $T_M$ is upper semi-Weyl and $T_N$ is Riesz. As above we conclude that $f(T)=f(T_M) \oplus f(T_N)$ and $f(T_N)$ is Riesz. From
$0 \not \in \sigma_{\W_+}(T_M) \subset
\sigma_{\W_+}(T)$, we obtain
$0 \not \in f(\sigma_{\W_+}(T_M))$. Since $\sigma_{\W_+}(f(T_M))\subset f(\sigma_{\W_+}(T_M))$ \cite[Theorem 3.3]{Voki}, it follows that
$0 \not \in
\sigma_{\W_+}(f(T_M))$, and  so
$f(T_M)$ is upper semi-Weyl.  Consequently,  $f(T)$ is generalized Drazin-Riesz upper semi-Weyl.

 Similarly for  $i=11,12$.
\end{proof}

\noindent The following corollary follows at once from Theorem

\ref{calculus1+}.

\begin{corollary}
Let $T \in L(X)$. If $T \in {\bf g DR R}_i(X)$, then $T^n \in {\bf g D
RR}_i(X)$ for every $n \in \mathbb{N}$,  $1 \leq i \leq 12$.
\end{corollary}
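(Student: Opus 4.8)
The plan is to deduce this corollary directly from Theorem \ref{calculus1+} by choosing the analytic function $f(z)=z^n$. First I would observe that $f(z)=z^n$ is entire, hence analytic in a neighborhood of $\sigma(T)$, so Theorem \ref{calculus1+} is applicable provided we verify its hypothesis $f^{-1}(0)\cap\sigma_{\mathbf R_i}(T)=\{0\}$. Since $f(z)=z^n$ vanishes only at $z=0$, we have $f^{-1}(0)=\{0\}$, and therefore $f^{-1}(0)\cap\sigma_{\mathbf R_i}(T)\subseteq\{0\}$ automatically, with no condition whatsoever needed on $\sigma_{\mathbf R_i}(T)$. The only subtle point is whether $0$ itself lies in $\sigma_{\mathbf R_i}(T)$, so that the intersection is exactly $\{0\}$ rather than $\emptyset$; but $T\in{\bf gDRR}_i(X)$ writes $T=T_1\oplus T_2$ with $T_2$ Riesz, and a Riesz operator on an infinite-dimensional space is never in $\mathbf R_i$ (it fails to be bounded below, surjective, semi-Fredholm, etc., at $0$), so by Lemma \ref{lema} $T\notin\mathbf R_i$, i.e. $0\in\sigma_{\mathbf R_i}(T)$. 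Hence $f^{-1}(0)\cap\sigma_{\mathbf R_i}(T)=\{0\}$ exactly.

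With the hypotheses of Theorem \ref{calculus1+} verified, I would conclude $f(T)=T^n\in{\bf gDRR}_i(X)$. Since $n\in\mathbb N$ was arbitrary, this gives the claim for every $n$ and every $1\le i\le 12$.

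I do not expect any real obstacle here; the corollary is a formal specialization. The only place where one must be slightly careful is the edge case where $\sigma_{\mathbf R_i}(T)$ might not contain $0$ — but as noted this cannot happen for an operator of the form $T_1\oplus T_2$ with $T_2$ a (necessarily non-invertible, non-Fredholm, etc.) Riesz operator on an infinite-dimensional space. Alternatively, if one wished to sidestep this point entirely, one could note that Theorem \ref{calculus1+} only needs $f^{-1}(0)\cap\sigma_{\mathbf R_i}(T)\subseteq\{0\}$ in its proof (the argument there only uses that $0\notin\sigma_{\mathbf R_i}(T_M)$), so the inclusion suffices and the statement follows regardless.
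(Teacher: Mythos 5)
Your approach is exactly the paper's: the paper offers no argument beyond the remark that the corollary ``follows at once'' from Theorem \ref{calculus1+}, and specializing to $f(z)=z^n$ is the intended route. One caveat: your first justification of the equality $f^{-1}(0)\cap\sigma_{{\bf R}_i}(T)=\{0\}$ is not correct as stated. The Riesz summand $T_2$ in the decomposition $T=T_1\oplus T_2$ may act on a finite-dimensional or even zero subspace (every operator on a finite-dimensional space is Riesz), so ${\bf R}_i(X)\subset{\bf gDRR}_i(X)$ and $0$ need not belong to $\sigma_{{\bf R}_i}(T)$; for instance any invertible $T$ lies in ${\bf gDRR}_3(X)$ while $0\notin\sigma(T)=\sigma_{{\bf R}_3}(T)$, so the intersection can be empty and the literal hypothesis of Theorem \ref{calculus1+} fails. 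Your fallback observation is the right fix: the proof of Theorem \ref{calculus1+} only uses the inclusion $f^{-1}(0)\cap\sigma_{{\bf R}_i}(T)\subseteq\{0\}$ (to conclude $0\notin f(\sigma_{{\bf R}_i}(T_M))$ from $0\notin\sigma_{{\bf R}_i}(T_M)$) together with $f(0)=0$ (to conclude $f(T_N)$ is Riesz), and both of these hold automatically for $f(z)=z^n$. With that reading the corollary follows for all $n$ and all $1\le i\le 12$.
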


\begin{proposition} \label{decomposition}
Let $T \in L(X)$ and let $f$ be a complex analytic function in a
neighborhood of $\sigma(T)$ such that $f^{-1}(0) \cap \acc \,
\sigma(T)=\emptyset$. Then $f(T)=A+K$, where $A \in L(X)$ is
generalized Dazin-Riesz Fredholm and $K \in L(X)$ is compact.
\end{proposition}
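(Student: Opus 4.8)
The plan is to split the analytic functional calculus along a spectral decomposition of $T$ determined by the zeros of $f$ that lie in $\sigma(T)$, and then identify the "bad" summand as the source of a finite-rank (hence compact) perturbation. First I would observe that $f^{-1}(0)\cap\sigma(T)$ is a compact subset of $\sigma(T)$ consisting only of isolated points of $\sigma(T)$: indeed, by hypothesis $f^{-1}(0)\cap\acc\,\sigma(T)=\emptyset$, so every point of $f^{-1}(0)\cap\sigma(T)$ lies in $\iso\,\sigma(T)$; since zeros of the nonzero analytic function $f$ are themselves isolated, $f^{-1}(0)\cap\sigma(T)$ has no accumulation point in the compact set $\sigma(T)$, hence is finite, say $\{\lambda_1,\dots,\lambda_k\}$. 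Each $\lambda_j$ is an isolated point of $\sigma(T)$, so the Riesz idempotent $P_j=\frac{1}{2\pi i}\int_{\Gamma_j}(\lambda-T)^{-1}d\lambda$ associated with $\{\lambda_j\}$ is well defined, and $P:=P_1+\cdots+P_k$ is a bounded projection commuting with $T$ with $(M_0,N_0):=(N(P),R(P))\in Red(T)$, where $\sigma(T_{N_0})=\{\lambda_1,\dots,\lambda_k\}$ and $\sigma(T_{M_0})=\sigma(T)\setminus\{\lambda_1,\dots,\lambda_k\}$.

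Next I would apply the functional calculus separately to the two reductions: $f(T)=f(T_{M_0})\oplus f(T_{N_0})$. On $M_0$ the function $f$ has no zero on $\sigma(T_{M_0})$, so by the spectral mapping theorem $0\notin f(\sigma(T_{M_0}))=\sigma(f(T_{M_0}))$, i.e. $f(T_{M_0})$ is invertible; in particular it is Browder, so $f(T_{M_0})\in{\bf R}_6$. On $N_0$ the spectrum $\sigma(T_{N_0})$ is finite, and I would further decompose $N_0$ according to whether $\lambda_j$ is or is not a zero of $f$: write $N_0=N'\oplus N''$ where $N'$ carries the $\lambda_j$ with $f(\lambda_j)\ne 0$ and $N''$ carries the $\lambda_j$ with $f(\lambda_j)=0$. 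On $N'$, again $0\notin\sigma(f(T_{N'}))$ so $f(T_{N'})$ is invertible. The operator $f(T_{N''})$ is quasinilpotent (its spectrum is $f(\{\lambda_j: f(\lambda_j)=0\})=\{0\}$). Thus, regrouping, $f(T)=B\oplus Q$ where $B=f(T_{M_0})\oplus f(T_{N'})$ is invertible on $M_0\oplus N'$ and $Q=f(T_{N''})$ is quasinilpotent on $N''$. Since an invertible operator is Browder and a quasinilpotent operator is Riesz, Lemma \ref{lema-Riesz} and Lemma \ref{lema} give that $f(T)$ is a direct sum of a Browder operator and a Riesz operator; by Theorem \ref{t3} (equivalence of (i)/(vii) with (iv)) and its Fredholm analogue in Theorem \ref{t4} with $i=9$, $f(T)$ is generalized Drazin-Riesz Fredholm. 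So I may take $A:=f(T)$ and $K:=0$ — but this is cheating, since the point of the statement is presumably a nontrivial compact correction; I should instead not assume $f(T)$ itself lies in the class.

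Re-examining, the substantive content is that $f(T)$ need \emph{not} be generalized Drazin-Riesz Fredholm on its own, and the compact perturbation is needed only to fix the summand $N''$, which is finite-dimensional. Indeed the Riesz idempotents $P_j$ corresponding to isolated points of $\sigma(T)$ need not have finite rank, so $Q=f(T_{N''})$ need not be Riesz unless $N''$ is finite-dimensional — and here is where the hypothesis really bites. Since each $\lambda_j\in N''$ is a zero of $f$ and simultaneously an isolated point of $\sigma(T)$, and the functional calculus element $f(T_{N''})$ is quasinilpotent, it is automatically Riesz as an operator on a Banach space regardless of dimension; so in fact $Q$ \emph{is} Riesz. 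Hence the cleanest route is: take $A:=f(T_{M_0})\oplus f(T_{N'})\oplus 0_{N''}$, which is a direct sum (over $M_0\oplus N'$ and $N''$) of an invertible operator and the zero operator; the zero operator on any space is Browder$\,\oplus\,$nothing — rather, $0_{N''}$ is Riesz, so $A$ is invertible$\,\oplus\,$Riesz, hence generalized Drazin-Riesz Fredholm by Theorem \ref{t3}; and $K:=f(T)-A=0_{M_0}\oplus 0_{N'}\oplus f(T_{N''})=Q$, which is quasinilpotent but need not be compact. The main obstacle, then, is exactly to arrange that the correction term is \emph{compact}, not merely Riesz: this forces one to perturb only on a finite-dimensional reducing subspace. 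The resolution is to choose $A$ to agree with $f(T)$ on $M_0\oplus N'$ and to equal any fixed invertible (or Browder) operator on $N''$, e.g. $A:=f(T_{M_0})\oplus f(T_{N'})\oplus I_{N''}$; then $A$ is invertible (hence Browder, hence generalized Drazin-Riesz Fredholm), and $K:=f(T)-A=0\oplus 0\oplus\bigl(f(T_{N''})-I_{N''}\bigr)$ is supported on $N''$. Thus the proof reduces to showing $\dim N''<\infty$, and I would need an additional input for that — typically that each $\lambda_j$ that is a zero of $f$ is a \emph{pole} of the resolvent (equivalently, $T-\lambda_j$ is Drazin invertible) so that the corresponding Riesz idempotent has finite rank; this should follow from whatever ambient hypothesis on $T$ the authors intend (for instance, $T$ generalized Drazin-Riesz Fredholm, making the isolated spectral points poles of finite rank away from the Riesz part). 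Granting finite-dimensionality of $N''$, $K$ is finite-rank, hence compact, and $f(T)=A+K$ is the desired decomposition.
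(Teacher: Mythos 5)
Your spectral-decomposition route is genuinely different from the paper's proof, which works in the Calkin algebra: there one uses $f(\pi(T))=\pi(f(T))$ and Harte's inclusion $\acc\,\sigma(\pi(f(T)))\subset f(\acc\,\sigma(\pi(T)))\subset f(\acc\,\sigma(T))$ to conclude that $\pi(f(T))$ is generalized Drazin invertible in the Calkin algebra, and then invokes Boasso's characterization of such operators as sums $A+K$ with $K$ compact. Your argument stays in $L(X)$ and, once cleaned up, actually yields a stronger conclusion than the proposition asserts.

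The problem is the last third of your write-up, where you talk yourself out of a proof you had already completed. Your decomposition is sound: $f^{-1}(0)\cap\sigma(T)$ is a finite set $\{\lambda_1,\dots,\lambda_k\}$ of isolated points of $\sigma(T)$; the associated Riesz projection gives $(M_0,N_0)\in Red(T)$ with $f(T)=f(T_{M_0})\oplus f(T_{N_0})$; $f(T_{M_0})$ is invertible by the spectral mapping theorem since $f$ does not vanish on $\sigma(T_{M_0})=\sigma(T)\setminus\{\lambda_1,\dots,\lambda_k\}$; and $f(T_{N_0})$ has spectrum $f(\{\lambda_1,\dots,\lambda_k\})=\{0\}$, hence is quasinilpotent, hence Riesz --- as you yourself observe, \emph{regardless} of $\dim N_0$. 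So $f(T)$ is a direct sum of an invertible operator and a Riesz operator, i.e.\ $f(T)\in{\bf gDRR}_3(X)\subset{\bf gDRR}_9(X)$, and taking $A=f(T)$, $K=0$ is a perfectly legitimate instance of the asserted decomposition: the proposition claims existence of such a splitting, not that $K\neq 0$. (Your $N'/N''$ split is vacuous in any case, since by your own definition every $\lambda_j$ is a zero of $f$.) The ``obstacle'' you end on --- proving $\dim N''<\infty$ --- is therefore illusory; it arises only because you chose to replace $f(T_{N''})$ by $I_{N''}$ inside $A$, a modification nothing in the statement requires. As submitted, your proposal ends by declaring a gap that does not exist; deleting everything after ``So I may take $A:=f(T)$ and $K:=0$'' and retaining the justification that an invertible-plus-quasinilpotent direct sum lies in ${\bf gDRR}_9(X)$ leaves a complete, correct, and in fact sharper proof.
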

\begin{proof}
Since $\sigma(\pi(T)) \subset \sigma(T)$, $f$ is analytic in a
neighborhood of $\sigma(\pi(T))$ and we have $f(\pi(T))=\pi(f(T))$.
 Then
\[\acc \, \sigma(\pi(f(T))=\acc \, \sigma(f(\pi(T))
\subset f(\acc \, \sigma(\pi(T)) \subset f(\acc \, \sigma(T))\] according to
\cite[Theorem 2]{Harte}. By the assumption it follows $0 \not \in
\acc \, \sigma(\pi(f(T))$, i.e. $\pi(f(T))$ is generalized Drazin
invertible. Now, we apply \cite[Theorem 3.11]{Boasso1}, which
completes the proof.
\end{proof}

\begin{corollary}
Let $T \in L(X)$ have finite spectrum and let $f$ be a
complex analytic function in a neighborhood of $\sigma(T)$. Then
$f(T)=A+K$, where $A \in L(X)$ is generalized Dazin-Riesz Fredholm
and $K \in L(X)$ is compact.
\end{corollary}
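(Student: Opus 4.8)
The plan is to obtain this corollary as an immediate specialization of Proposition~\ref{decomposition}. First I would note that the hypothesis appearing there, namely $f^{-1}(0)\cap\acc\,\sigma(T)=\emptyset$, becomes vacuous in the present setting: since $\sigma(T)$ is a \emph{finite} subset of $\CC$, it has no accumulation points, so $\acc\,\sigma(T)=\emptyset$, and therefore $f^{-1}(0)\cap\acc\,\sigma(T)=\emptyset$ for \emph{every} function $f$ analytic in a neighborhood of $\sigma(T)$. In particular no restriction on the zero set of $f$ is needed.

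Then I would simply invoke Proposition~\ref{decomposition} with this $T$ and this $f$: it produces a decomposition $f(T)=A+K$ in which $A\in L(X)$ is generalized Drazin-Riesz Fredholm and $K\in L(X)$ is compact, which is precisely the assertion. Equivalently, one could re-run the short argument of that proposition directly, observing that $\sigma(\pi(f(T)))=\sigma(f(\pi(T)))\subseteq f(\sigma(\pi(T)))\subseteq f(\sigma(T))$ is finite, hence $0\notin\acc\,\sigma(\pi(f(T)))$, so that $\pi(f(T))$ is generalized Drazin invertible in the Calkin algebra, and then \cite[Theorem 3.11]{Boasso1} yields the splitting $f(T)=A+K$.

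There is no genuine obstacle in this corollary; the only point worth emphasizing is the elementary topological observation that finiteness of $\sigma(T)$ forces $\acc\,\sigma(T)$ to be empty, which removes the hypothesis on $f$ in Proposition~\ref{decomposition} entirely and lets the conclusion hold for an arbitrary analytic function $f$ on a neighborhood of $\sigma(T)$.
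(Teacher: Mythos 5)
Your argument is correct and is exactly the paper's: finiteness of $\sigma(T)$ gives $\acc\,\sigma(T)=\emptyset$, so the hypothesis of Proposition~\ref{decomposition} is automatically satisfied and the corollary follows by direct application. Nothing further is needed.
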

\begin{proof}
Since
$\acc \, \sigma(T)=\emptyset$, so the condition $f^{-1}(0) \cap \acc
\, \sigma(T)=\emptyset$ is automatically satisfied, and apply
Proposition \ref{decomposition}.
\end{proof}

\begin{corollary}
Let $T \in L(X)$ be polynomially Riesz and let $f$ be a
complex analytic function in a neighborhood of $\sigma(T)$ such that $f^{-1}(0) \cap
\pi_T^{-1}(0)=\emptyset$  where $\pi_T$ is the minimal polynomial of $T$. Then
$f(T)=A+K$, where $A \in L(X)$ is generalized Dazin-Riesz Fredholm
and $K \in L(X)$ is compact.
\end{corollary}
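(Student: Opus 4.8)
The plan is to deduce the statement directly from Proposition~\ref{decomposition}. That proposition yields a decomposition $f(T)=A+K$, with $A\in L(X)$ generalized Drazin-Riesz Fredholm and $K\in L(X)$ compact, whenever $f$ is analytic on a neighbourhood of $\sigma(T)$ and $f^{-1}(0)\cap\acc\,\sigma(T)=\emptyset$. Since analyticity of $f$ near $\sigma(T)$ is assumed, it suffices to verify the spectral hypothesis. The crux is therefore the inclusion
\[
\acc\,\sigma(T)\subseteq\pi_T^{-1}(0),
\]
valid for any polynomially Riesz $T$ with minimal polynomial $\pi_T$. Granting this, the hypothesis $f^{-1}(0)\cap\pi_T^{-1}(0)=\emptyset$ forces $f^{-1}(0)\cap\acc\,\sigma(T)=\emptyset$, and Proposition~\ref{decomposition} applies.

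To establish $\acc\,\sigma(T)\subseteq\pi_T^{-1}(0)$, I would first note that $\deg\pi_T\ge 1$: were $\pi_T$ constant it would be $\pi_T\equiv 1$ (it is monic of minimal degree), so $\pi_T(T)=I$, which is not Riesz on the infinite-dimensional space $X$ (as $0$ is not a Fredholm point of $I$), contradicting the defining property of $\pi_T$. Next, $\pi_T(T)$ is Riesz, so by \cite[Theorem~3.111]{Ai} its spectrum has no nonzero accumulation point, i.e. $\acc\,\sigma(\pi_T(T))\subseteq\{0\}$, while by the spectral mapping theorem $\sigma(\pi_T(T))=\pi_T(\sigma(T))$. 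Now take $\lambda_0\in\acc\,\sigma(T)$ and choose a sequence $(\lambda_n)$ of pairwise distinct points of $\sigma(T)\setminus\{\lambda_0\}$ with $\lambda_n\to\lambda_0$; since $\pi_T$ is a nonconstant polynomial its fibre $\pi_T^{-1}(\{\pi_T(\lambda_0)\})$ is finite, so $\pi_T(\lambda_n)\neq\pi_T(\lambda_0)$ for all large $n$, whereas $\pi_T(\lambda_n)\to\pi_T(\lambda_0)$ by continuity. Hence $\pi_T(\lambda_0)\in\acc\,\pi_T(\sigma(T))=\acc\,\sigma(\pi_T(T))\subseteq\{0\}$, so $\pi_T(\lambda_0)=0$. (One may instead invoke \cite[Theorem~2]{Harte} for the inclusion $\pi_T(\acc\,\sigma(T))\subseteq\acc\,\pi_T(\sigma(T))$, exactly as in the proof of Proposition~\ref{decomposition}.)

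With $f^{-1}(0)\cap\acc\,\sigma(T)=\emptyset$ now in hand, Proposition~\ref{decomposition} gives $f(T)=A+K$ with $A$ generalized Drazin-Riesz Fredholm and $K$ compact, which is the assertion. The only genuinely substantive step is the spectral inclusion $\acc\,\sigma(T)\subseteq\pi_T^{-1}(0)$; the rest is routine bookkeeping with the spectral mapping theorem and the previously cited Riesz-operator and decomposition results, so I expect no serious obstacle beyond recording that argument cleanly.
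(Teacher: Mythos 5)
Your proposal is correct and follows the same overall strategy as the paper: both reduce the corollary to Proposition~\ref{decomposition} by verifying $f^{-1}(0)\cap\acc\,\sigma(T)=\emptyset$, which in turn rests on the inclusion $\acc\,\sigma(T)\subseteq\pi_T^{-1}(0)$. The only difference lies in how that inclusion is justified. The paper simply cites the polynomially Riesz machinery of \cite{ZDHB}, namely $\acc\,\sigma(T)\subset\sigma_{\mathcal{B}}(T)=\pi_T^{-1}(0)$, whereas you prove the inclusion from scratch: $\pi_T(T)$ Riesz forces $\acc\,\sigma(\pi_T(T))\subseteq\{0\}$, and the spectral mapping theorem together with the finiteness of polynomial fibres pushes an accumulation point $\lambda_0$ of $\sigma(T)$ to an accumulation point of $\pi_T(\sigma(T))$, hence to $\pi_T(\lambda_0)=0$. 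Your argument is self-contained and slightly more elementary (it avoids the structure theorem $\sigma_{\mathcal{B}}(T)=\pi_T^{-1}(0)$ and only uses the definition of the minimal polynomial and basic Riesz theory), at the cost of a few extra lines; the paper's version is shorter but leans on an external result. Both are sound, and your observation that $\deg\pi_T\ge 1$ (needed for the finite-fibre step) is a detail worth recording.
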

\begin{proof}
Notice that if $T \in L(X)$ is polynomially Riesz, then
$\acc \, \sigma(T)\subset \sigma_{\mathcal{B}}(T)=\pi_T^{-1}(0)$, so  $f^{-1}(0) \cap \acc
\, \sigma(T)\subset f^{-1}(0) \cap
\pi_T^{-1}(0)=\emptyset$  and we apply
Proposition \ref{decomposition}.
\end{proof}

\section{Spectra}
For $T\in L(X)$ we define the spectra with respect to the sets ${\bf
g DR R}_i(X)$:

\[\sigma_{{\bf g DR R}_i}(T)=\{\lambda \in \mathbb{C}:T-\lambda \not \in
{\bf g DR R}_i (X)\}, \; \; 1\leq i \leq 12.\]

Instead of $\sigma_{\bf gDRL(X)^{-1}}(T)$ we shall write simpler  $\sigma_{ gDR}(T)$.
From Theorems \ref{t1},  \ref{t2} and \ref{t3} it follows that
\begin{eqnarray}
\sigma_{\bf gDR\M}(T)&=&\sigma_{gKR}(T)\cup {\rm int} \,
\sigma_{ap} (T)\label{n0}\\
&=&\sigma_{gKR}(T)\cup {\rm int} \,
\sigma_{\B_+} (T)\nonumber\\
&=&\sigma_{gKR}(T)\cup \acc\, \sigma_{\B_+} (T)),\nonumber
\\&=&\sigma_{gKR}(T)\cup \S (T),\nonumber
\end{eqnarray}
\begin{eqnarray}
\sigma_{\bf gDR\Q}(T)&=&\sigma_{gKR}(T)\cup {\rm int}\, \sigma_{su} (T)\label{n1}\\
&=&\sigma_{gKR}(T)\cup {\rm int}\, \sigma_{\B_-} (T)\nonumber\\
&=&\sigma_{gKR}(T)\cup \acc\, \sigma_{\B_-} (T),\nonumber
\\&=&\sigma_{gKR}(T)\cup \S (T^\prime)\nonumber
\end{eqnarray}
and
\begin{eqnarray}
\sigma_{gDR}(T)&=&\sigma_{gKR}(T)\cup {\rm int }\,  \sigma(T)\label{n2}\\
&=&\sigma_{gKR}(T)\cup {\rm int}\, \sigma_{\B} (T)\nonumber\\
&=&\sigma_{gKR}(T)\cup \acc\, \sigma_{\B} (T)\label{poziv}
\\&=&\sigma_{gKR}(T)\cup\S(T)\cup \S (T^\prime).\nonumber
\end{eqnarray}

From Theorem \ref{t4} it follows that
\begin{eqnarray}
 \sigma_{{\bf g DR R}_i}(T)&=&\sigma_{gKR}(T) \cup \acc \, \sigma_{{\bf
R}_i}(T)\label{glava}\\&=&\sigma_{gKR}(T) \cup {\rm int} \,
\sigma_{{\bf R}_i}(T), \ 7\leq i \leq 12.\label{glava-}
\end{eqnarray}

 Clearly,

%

\begin{eqnarray}  &\sigma_{\bf g DR \Phi_+}(T)\subset \sigma_{\bf g DR \W_+}(T)\subset \sigma_{\bf g D R\M}(T)\subset \sigma_{ap}(T)\nonumber\\
  \sigma_{gKR}(T)\subset &\label{inkluzija}\\
  &\sigma_{\bf g DR \Phi_-}(T)\subset \sigma_{\bf g DR \W_-}(T)\subset \sigma_{\bf g DR \Q}(T)\subset \sigma_{su}(T),\nonumber
\end{eqnarray}
 \begin{eqnarray}  &\sigma_{\bf g DR \Phi_+}(T)\subset \sigma_{\bf g DR \W_+}(T)\subset \sigma_{\bf g DR \M}(T)&\nonumber\\
  \sigma_{gKR}(T)\subset & &\subset \sigma_{gDR}(T)\label{inkluzija-}\\
  &\sigma_{\bf g DR \Phi_-}(T)\subset \sigma_{\bf g DR \W_-}(T)\subset \sigma_{\bf g DR \Q}(T)&\nonumber
\end{eqnarray}
and

 \begin{equation}\label{inkluzija--}
   \sigma_{gKR}(T)\subset\sigma_{\bf g D R\Phi}(T)\subset \sigma_{\bf g DR \W}(T)\subset\sigma_{gDR}(T).
 \end{equation}
%

\medskip

%
%

\begin{remark}\label{poi} {\rm We remark that
\begin{eqnarray}
   &\Phi_+(X)\setminus\W_+(X)\subset {\bf gDR\Phi_+}(X)\setminus {\bf gDR\W_+}(X),\label{gh0}\\
   &\Phi_-(X)\setminus\W_-(X)\subset {\bf gDR\Phi_-}(X)\setminus {\bf gDR\W_-}(X),\label{gh}\\
   &\Phi(X)\setminus\W(X)\subset {\bf gDR\Phi}(X)\setminus {\bf gDR\W}(X).\label{gh00}
\end{eqnarray}
Indeed, as the index is locally constant, the set
$\Phi_+(X)\setminus\W_+(X)=\{T\in \Phi(X): \ind(T)>0\}$ is open,
which implies that the set
$\sigma_{\W_+}(T)\setminus\sigma_{\Phi_+}(T)=\rho_{\Phi_+}(T)\setminus\rho_{\W_+}(T)$
is open for every $T\in L(X)$. Suppose that $T\in
\Phi_+(X)\setminus\W_+(X)$. Then $T\in {\bf gDR \Phi_+}(X)$ and $0\in
\sigma_{\W_+}(T)\setminus\sigma_{\Phi_+}(T)$. There exists
$\epsilon>0$ such that $D(0,\epsilon)\subset
\sigma_{\W_+}(T)\setminus\sigma_{\Phi_+}(T)$. Hence, $0\in \inter\,
\sigma_{\W_+}(T)$ and $T\notin {\bf gDR\W_+}(X)$  according to
Theorem \ref{t4}. Similarly for (\ref{gh}) and (\ref{gh00}).}
\end{remark}
\noindent The following example shows that the inclusions $\sigma_{{\bf gDR\Phi_+}}(T)\subset \sigma_{{\bf gDR\W_+}}(T)$, $\sigma_{{\bf gDR\Phi_-}}(T)\subset \sigma_{{\bf gDR\W_-}}(T)$ and $\sigma_{{\bf gDR\Phi}}(T)\subset \sigma_{{\bf gDR\W}}(T)$ can be proper.
\begin{example} \label{ex2}
{\rm If  $X$ is one of  $c_0(\NN), c(\NN), \ell_{\infty}(\NN), \ell_p(\NN)$, $p\ge 1$,\ 
the forward and the backward unilateral shifts $U$ and $V$ on $X$ are
Fredholm, $\ind(U)=-1$ and $\ind(V)=1$. Therefore,   $U\in
\Phi_-(X)\setminus\W_-(X)$ and $V\in  \Phi_+(X)\setminus\W_+(X)$, and also $U,V\in
\Phi(X)\setminus\W(X)$.
Hence, according to Remark \ref{poi},
 $U\in  {\bf gDR\Phi_-}(X)\setminus {\bf gDR\W_-}(X)$, $V\in {\bf gDR\Phi_+}(X)\setminus {\bf gDR\W_+}(X)$ and $U,V\in {\bf gDR\Phi}(X)\setminus {\bf gDR\W}(X)$.
 This implies that $0\in \sigma_{{\bf gDR\W_-}}(U)\setminus\sigma_{{\bf gDR\Phi_-}}(U)$, $0\in \sigma_{{\bf gDR\W_+}}(V)\setminus\sigma_{{\bf gDR\Phi_+}}(V)$ and $0\in \sigma_{{\bf gDR\W}}(U)\setminus \sigma_{{\bf gDR\Phi}}(U)$.}
\end{example}
\noindent The following example shows that the inclusions $\sigma_{{\bf gDR\W_+}}(T)\subset\sigma_{{\bf
gDR\M}}(T)$ and $\sigma_{{\bf gDR\W_-}}(T)\subset\sigma_{{\bf
gDR\Q}}(T)$ can be proper.
Set $\DD=\{\lambda\in\CC:|\lambda|\le 1\}$.

\begin{example} \label{ex3}
{\rm If  $X$ is one of  $c_0(\NN), c(\NN), \ell_{\infty}(\NN), \ell_p(\NN)$, $p\ge 1$,  let $T=U\oplus V$ where  $U$ and $V$ are forward and the backward unilateral shifts, respectively. Then, according to Lemma \ref{lema}(i),  $T$ is Fredholm and $\ind(T)=\ind(U)+\ind(V)=0$. Thus $T$ is Weyl and hence, $T$ is generalized Drazin-Riesz Weyl. Since $\sigma_{ap}(U)= \sigma_{su}(V)=\partial\DD$ and $\sigma_{su}(U)= \sigma_{ap}(V)=\DD$, it follows that $\sigma_{ap}(T)=\sigma_{ap}(U)\cup \sigma_{ap}(V)=\DD$ and $\sigma_{su}(T)=\sigma_{su}(U)\cup \sigma_{su}(V)=\DD$. Therefore, $0\in\inter\, \sigma_{ap}(T)$ and $0\in\inter\, \sigma_{su}(T)$ and  from Theorems \ref{t1} and \ref{t2} it follows  that $T$ is neither generalized Drazin-Riesz bounded below nor generalized Drazin-Riesz surjective and so, $0\in \sigma_{{\bf
gDR\M}}(T)\setminus \sigma_{{\bf gDR\W_+}}(T)$ and $0\in \sigma_{{\bf
gDR\Q}}(T)\setminus \sigma_{{\bf gDR\W_-}}(T)$.
}
\end{example}

 We need the following result.
\begin{proposition}{(\cite[p. 143]{Mull-Mb})}\label{Muller-Mbehta} Let $T\in L(X)$ and $(M,N)\in Red(T)$. Then $T$ is essentially  Kato if and only if $T_M$ and $T_N$ are essentially Kato.
\end{proposition}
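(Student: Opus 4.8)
The plan is to exploit the fact that \emph{essential Kato-ness, expressed through kernels and ranges, passes freely between an operator and the two halves of a reducing decomposition}. Concretely, I would rely on the classical internal characterization of essential semi-regularity (Mbekhta, M\"uller): an operator $S\in L(Y)$ is essentially Kato if and only if $R(S)$ is closed and there is a finite-dimensional subspace $F\subseteq Y$ such that $N(S)\subseteq R(S^{n})+F$ for every $n\in\NN$. The whole argument then rests on the remark that, since $(M,N)\in Red(T)$ forces $(M,N)\in Red(T^{n})$ and $T^{n}=T_{M}^{n}\oplus T_{N}^{n}$ for all $n$, one has inside the topological direct sum $X=M\oplus N$ the identities $R(T^{n})=R(T_{M}^{n})\oplus R(T_{N}^{n})$ and $N(T)=N(T_{M})\oplus N(T_{N})$; in particular $R(T)$ is closed exactly when $R(T_{M})$ and $R(T_{N})$ both are (intersect $R(T)=R(T_{M})\oplus R(T_{N})$ with $M$, resp.\ $N$).

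For sufficiency I would assume $T_{M}$ and $T_{N}$ essentially Kato and pick finite-dimensional $F_{M}\subseteq M$ and $F_{N}\subseteq N$ with $N(T_{M})\subseteq R(T_{M}^{n})+F_{M}$ and $N(T_{N})\subseteq R(T_{N}^{n})+F_{N}$ for all $n$. Then $R(T)$ is closed and, summing the two inclusions, $N(T)=N(T_{M})\oplus N(T_{N})\subseteq R(T^{n})+(F_{M}\oplus F_{N})$ with $\dim(F_{M}\oplus F_{N})<\infty$, so $T$ is essentially Kato.

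For necessity I would assume $T$ essentially Kato, fix a finite-dimensional $F\subseteq X$ with $N(T)\subseteq R(T^{n})+F$ for all $n$, and let $P_{M}$ be the continuous projection of $X$ onto $M$ along $N$ (and $P_{N}=I-P_{M}$). For $x\in N(T_{M})\subseteq N(T)$, writing $x=r+f$ with $r\in R(T^{n})$, $f\in F$ and applying $P_{M}$ gives $x=P_{M}r+P_{M}f$ with $P_{M}r\in P_{M}(R(T^{n}))=R(T_{M}^{n})$, so $N(T_{M})\subseteq R(T_{M}^{n})+P_{M}(F)$ with $\dim P_{M}(F)<\infty$, for every $n$; since $R(T_{M})$ is closed, $T_{M}$ is essentially Kato. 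Replacing $P_{M}$ by $P_{N}$ yields the same for $T_{N}$.

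The work really all sits in the single external input, the internal kernel--range description of essential Kato-ness; once it is granted, both directions are mechanical, since $R(T)$, $N(T)$, the $R(T^{n})$ and the notion of a finite-dimensional defect all split along the reducing pair. If instead one insisted on arguing only from the defining formulation (a GKD $(M,N)$ with finite-dimensional second summand), the obstacle would be that a GKD of $T$ chosen at will need not be compatible with the given pair; one would then have to build, by hand, compatible finite-dimensional decompositions of $M$ and of $N$ — presumably via a finite-rank perturbation argument, writing $T$ as a Kato operator plus an operator of finite rank and controlling how it meets $P_{M}$ — and that step is where the genuine difficulty would lie.
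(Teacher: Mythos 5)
The paper does not prove this proposition at all --- it is quoted verbatim from Mbekhta--M\"uller \cite{Mull-Mb} --- so there is no in-paper argument to match yours against; what you have written is a correct self-contained derivation of the cited fact. Your single external input, the characterization ``$S$ essentially Kato $\Longleftrightarrow$ $R(S)$ closed and $N(S)\subseteq R(S^n)+F$ for all $n$ with one fixed finite-dimensional $F$'', is a genuine theorem (it is exactly the kernel--range description of essential semi-regularity in \cite{Mull-Mb} and in M\"uller's book), and once it is granted both directions do split mechanically as you claim: $T^n=T_M^n\oplus T_N^n$ gives $R(T^n)=R(T_M^n)\oplus R(T_N^n)$ and $N(T)=N(T_M)\oplus N(T_N)$; closedness of $R(T)$ passes down by intersecting with $M$ and $N$ and passes up because the projections $P_M,P_N$ are bounded (this converse direction is the one your parenthetical does not quite cover, but it is the standard argument: if $u_k+v_k\to x$ with $u_k\in R(T_M)$, $v_k\in R(T_N)$, then $u_k=P_M(u_k+v_k)\to P_Mx$ and likewise for $v_k$); and the finite-dimensional defects $F_M\oplus F_N$, resp.\ $P_M(F)$ and $P_N(F)$, are uniform in $n$ as required. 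So the proof is sound. The only honest caveat is that you have not eliminated the dependence on \cite{Mull-Mb} so much as relocated it from the direct-sum statement to the internal characterization; your closing remark correctly identifies that arguing purely from the GKD definition would be substantially harder, since a finite-dimensional Kato decomposition of $T$ need not be compatible with the prescribed pair $(M,N)$.
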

\begin{theorem}\label{closed}
Let $T\in L(X)$ and let $T$ admits a GKRD$(M,N)$. Then there exists $\epsilon>0$ such that $T-\lambda$ is essentially Kato for each $\lambda$ such that  $0<|\lambda|<\epsilon$.
\end{theorem}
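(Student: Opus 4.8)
The plan is to exploit the decomposition $X = M \oplus N$ with $T_M$ Kato and $T_N$ Riesz, and to handle the two summands separately. Since $T_M$ is Kato on $M$, it is in particular essentially Kato, and Kato operators are stable under small perturbations: there is $\epsilon_1 > 0$ such that $T_M - \lambda$ is Kato (hence essentially Kato) on $M$ for all $|\lambda| < \epsilon_1$. (This uses the classical openness of the Kato property, e.g. the perturbation results behind \cite[Theorem 16.21]{Mu}.) For the Riesz summand, $T_N - \lambda$ is Fredholm for every $\lambda \neq 0$, and every Fredholm (indeed every semi-Fredholm) operator is essentially Kato by \cite[Theorem 16.21]{Mu}; thus $T_N - \lambda$ is essentially Kato on $N$ for all $\lambda \neq 0$.

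Now fix $\lambda$ with $0 < |\lambda| < \epsilon_1$. Then $(M,N) \in Red(T-\lambda)$, and $(T-\lambda)_M = T_M - \lambda$ is essentially Kato while $(T-\lambda)_N = T_N - \lambda$ is essentially Kato. By Proposition \ref{Muller-Mbehta}, the direct sum $T - \lambda$ is essentially Kato on $X$. Taking $\epsilon = \epsilon_1$ completes the argument.

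The only point requiring a little care is the first step: one must invoke the right stability statement for the Kato property. In fact it is a standard fact (contained in the proof of \cite[Theorem 16.21]{Mu}) that the set of Kato operators is open in $L(M)$, which is more than enough; alternatively one may directly use that essentially Kato operators form an open set and that $T_M$, being Kato, lies in it. The Riesz side and the patching via Proposition \ref{Muller-Mbehta} are routine. I expect no genuine obstacle — the content is entirely the combination of (i) openness of the Kato class, (ii) semi-Fredholm $\Rightarrow$ essentially Kato for $T_N - \lambda$, $\lambda \neq 0$, and (iii) the reduction lemma for essential Kato-ness.
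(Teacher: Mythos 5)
Your proof is correct and follows essentially the same route as the paper: stability of the Kato property of $T_M$ under small scalar perturbations (the paper cites \cite[Theorem 1.31]{Ai}, taking $\epsilon=\gamma(T_M)$, and treats the degenerate case $M=\{0\}$ separately), the fact that $T_N-\lambda$ is Fredholm and hence essentially Kato for $\lambda\neq 0$, and Proposition \ref{Muller-Mbehta} to assemble the direct sum. One small caveat: the set of Kato (semi-regular) operators is not open in $L(M)$ under arbitrary norm perturbations --- stability holds only for commuting perturbations of norm less than $\gamma(T_M)$ --- so the justification should be the $\gamma(T_M)$-stability result rather than ``openness''; since you only perturb by $\lambda I$, this does not affect the validity of your argument.
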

\begin{proof}
If $M=\{0\}$, then $T$ is Riesz and hence $T-\lambda$ is Fredholm for all $\lambda\ne 0$. From \cite[Theorem 16.21]{Mu} it follows that $T-\lambda$ is essentially Kato for all $\lambda\ne 0$.

Suppose that $M\ne \{0\}$. From \cite[Theorem 1.31]{Ai} it follows that $|\lambda|<\gamma(T_M)$, $T_M-\lambda$ is Kato. Since $T_N$ is Riesz, then $T_N-\lambda$ is esentially Kato for all $\lambda\ne 0$. Let $\epsilon=\gamma(T_M)$. From Proposition \ref{Muller-Mbehta} it follows that $T-\lambda$ is essentially Kato for each $\lambda$ such that  $0<|\lambda|<\epsilon$.
\end{proof}
\begin{corollary}
Let $T\in L(X)$. Then $\sigma_{gKR}(T)$ is compact  and the sets $\sigma_{eK}\setminus\sigma_{gKR}(T)$,  $\sigma_{Kt}(T)\setminus\sigma_{gKR}(T)$ and $\sigma_{gK}(T)\setminus\sigma_{gKR}(T)$ consist of  at most countably many points.
\end{corollary}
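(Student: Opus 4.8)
The plan is to deduce compactness of $\sigma_{gKR}(T)$ and the countability statements as easy consequences of Theorem \ref{closed} together with the standing inclusions among the spectra. First I would observe that Theorem \ref{closed} says precisely that the complement $\rho_{gKR}(T) = \CC \setminus \sigma_{gKR}(T)$ is open: if $T - \lambda_0$ admits a GKRD, then by applying Theorem \ref{closed} to the operator $T - \lambda_0$ we get $\epsilon > 0$ such that $T - \lambda_0 - \mu$ is essentially Kato, hence admits a GKRD (taking the Riesz part to be nilpotent on a finite-dimensional space, which is in particular Riesz), for all $0 < |\mu| < \epsilon$; together with $\lambda_0$ itself this shows $D(\lambda_0, \epsilon) \subset \rho_{gKR}(T)$. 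Thus $\sigma_{gKR}(T)$ is closed. Boundedness is immediate from the inclusion $\sigma_{gKR}(T) \subset \sigma_{ap}(T) \cap \sigma_{su}(T) \subset \sigma(T)$ noted in the introduction, since $\sigma(T)$ is compact. Hence $\sigma_{gKR}(T)$ is compact.

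For the countability assertions, fix one of the three larger spectra, say $\sigma_{eK}(T)$ (the arguments for $\sigma_{Kt}(T)$ and $\sigma_{gK}(T)$ are identical since $\sigma_{gKR}(T) \subset \sigma_{gK}(T) \subset \sigma_{Kt}(T) \subset \sigma_{eK}(T)$). I would show that every point of $\sigma_{eK}(T) \setminus \sigma_{gKR}(T)$ is isolated in $\sigma_{eK}(T)$. Indeed, let $\lambda_0 \in \sigma_{eK}(T) \setminus \sigma_{gKR}(T)$. Since $\lambda_0 \notin \sigma_{gKR}(T)$, the operator $T - \lambda_0$ admits a GKRD, and by the argument of the previous paragraph there is $\epsilon > 0$ with $D(\lambda_0, \epsilon) \setminus \{\lambda_0\} \subset \rho_{eK}(T)$, because on that punctured disc $T - \lambda$ is actually essentially Kato. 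Therefore $\lambda_0$ is an isolated point of $\sigma_{eK}(T)$. Since $\sigma_{eK}(T) \subset \sigma(T)$ is a bounded subset of $\CC$, its set of isolated points is at most countable, so $\sigma_{eK}(T) \setminus \sigma_{gKR}(T)$ is at most countable. The same reasoning applied verbatim to $\sigma_{Kt}(T)$ and $\sigma_{gK}(T)$ — using that $T - \lambda$ essentially Kato on the punctured disc implies $T - \lambda$ is of Kato type and admits a GKD there — finishes the corollary.

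I do not anticipate a serious obstacle: the whole content is packaged in Theorem \ref{closed}, and the only care needed is the bookkeeping that "essentially Kato" sits below all of GKD, Kato type, and GKRD in the hierarchy of decompositions, so that the punctured disc produced by Theorem \ref{closed} simultaneously avoids $\sigma_{gKR}$, $\sigma_{gK}$, $\sigma_{Kt}$, and $\sigma_{eK}$ except possibly at the center. The one subtlety to state cleanly is that an essentially Kato operator does admit a GKRD — this is because its GKD $(M,N)$ has $N$ finite-dimensional and $T_N$ nilpotent, hence $T_N$ is Riesz, so the GKD is in particular a GKRD; this is exactly the kind of remark already used in the proof of the implication (vii)$\Longrightarrow$(viii) of Theorem \ref{t3}. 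With that in hand the proof is a two-line application of openness of $\rho_{gKR}(T)$ plus the countability of the isolated points of a bounded planar set.

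\begin{proof}
By the inclusions noted in the introduction, $\sigma_{gKR}(T) \subset \sigma_{ap}(T) \cap \sigma_{su}(T) \subset \sigma(T)$, so $\sigma_{gKR}(T)$ is bounded. To see it is closed, let $\lambda_0 \notin \sigma_{gKR}(T)$, so $T - \lambda_0$ admits a GKRD. By Theorem \ref{closed} applied to $T - \lambda_0$ there is $\epsilon > 0$ such that $T - \lambda$ is essentially Kato for every $\lambda$ with $0 < |\lambda - \lambda_0| < \epsilon$. An essentially Kato operator admits a GKD $(M,N)$ with $N$ finite dimensional and $T_N$ nilpotent, hence $T_N$ Riesz, so it admits a GKRD; thus $T - \lambda$ admits a GKRD for all such $\lambda$, and together with $\lambda_0$ itself this gives $D(\lambda_0, \epsilon) \subset \rho_{gKR}(T)$. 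Hence $\rho_{gKR}(T)$ is open and $\sigma_{gKR}(T)$ is compact.

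Now fix $\lambda_0 \in \sigma_{eK}(T) \setminus \sigma_{gKR}(T)$. As above there is $\epsilon > 0$ such that $T - \lambda$ is essentially Kato, hence $\lambda \notin \sigma_{eK}(T)$, for every $\lambda$ with $0 < |\lambda - \lambda_0| < \epsilon$. Therefore $\lambda_0$ is isolated in $\sigma_{eK}(T)$. Since $\sigma_{eK}(T) \subset \sigma(T)$ is bounded, its set of isolated points is at most countable, and hence $\sigma_{eK}(T) \setminus \sigma_{gKR}(T)$ is at most countable. Because $\sigma_{gKR}(T) \subset \sigma_{gK}(T) \subset \sigma_{Kt}(T) \subset \sigma_{eK}(T)$ and, on the punctured disc above, $T - \lambda$ being essentially Kato implies it is of Kato type and admits a GKD, the identical argument shows that $\sigma_{Kt}(T) \setminus \sigma_{gKR}(T)$ and $\sigma_{gK}(T) \setminus \sigma_{gKR}(T)$ also consist of at most countably many points.
\end{proof}
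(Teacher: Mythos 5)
Your proof is correct and follows essentially the same route as the paper: both deduce closedness (hence compactness) of $\sigma_{gKR}(T)$ from Theorem \ref{closed} together with the observation that an essentially Kato operator admits a GKRD, and both show that each point of $\sigma_{eK}(T)\setminus\sigma_{gKR}(T)$ is isolated in $\sigma_{eK}(T)$, concluding via the countability of isolated points and the inclusions $\sigma_{gK}(T)\subset\sigma_{Kt}(T)\subset\sigma_{eK}(T)$. Your write-up is in fact slightly more explicit than the paper's in spelling out why "essentially Kato" implies "admits a GKRD."
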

\begin{proof} From Theorem \ref{closed} it follows that $\sigma_{gKR}(T)$ is closed and since $\sigma_{gKR}(T)\subset \sigma(T)$ it follows that $\sigma_{gKR}(T)$ is bounded. Thus, $\sigma_{gKR}(T)$ is compact.

Let $\lambda_0\in \sigma_{eK}(T)\setminus\sigma_{gKR}(T)$. Then $T-\lambda_0$ admits a GKRD and according to Theorem \ref{closed} there exists $\epsilon>0$ such that $T-\lambda$ is essentially Kato for each $\lambda$ such that  $0<|\lambda-\lambda_0|<\epsilon$. This means that $\lambda_0\in\iso\, \sigma_{eK}(T)$. Therefore $\sigma_{eK}(T)\setminus\sigma_{gKR}(T)\subset \iso\, \sigma_{eK}(T)$ and since $\sigma_{gK}(T)\subset \sigma_{Kt}(T)\subset \sigma_{eK}(T)$, it follows that $\sigma_{Kt}(T)\setminus\sigma_{gKR}(T)\subset \iso\, \sigma_{eK}(T)$ and $\sigma_{gK}(T)\setminus\sigma_{gKR}(T)\subset \iso\, \sigma_{eK}(T)$, which implies that $\sigma_{eK}\setminus\sigma_{gKR}(T)$,  $\sigma_{Kt}(T)\setminus\sigma_{gKR}(T)$ and $\sigma_{gK}(T)\setminus\sigma_{gKR}(T)$ are  at most countable.
\end{proof}

\begin{proposition} \label{cor1}
For  $T \in L(X)$ the following statements
hold:
\par \noindent {\rm (i)} $\sigma_{{\bf g DR R}_i}(T) \subset
\sigma_{{\bf R}_i}(T) \subset \sigma(T)$, $1\leq i \leq 12$;\par \noindent {\rm (ii)}
$\sigma_{{\bf g D RR}_i}(T)$ is a compact subset of $\mathbb{C}$, $1\leq i \leq 12$;\par
\noindent {\rm (iii)} $\sigma_{{\bf R}_i}(T) \setminus \sigma_{{\bf
g D RR}_i}(T)$ consists of  at most countably many isolated points, $4\le i\le 12$.
\end{proposition}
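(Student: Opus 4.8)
The plan is to deduce all three parts from the structural descriptions of $\sigma_{{\bf g DR R}_i}(T)$ already obtained in Theorems \ref{t1}, \ref{t2}, \ref{t3}, \ref{t4} and recorded in the identities \eqref{n0}--\eqref{glava-}, together with the compactness of $\sigma_{gKR}(T)$ (the corollary following Theorem \ref{closed}).

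For (i), I would first check $\rho_{{\bf R}_i}(T)\subseteq\rho_{{\bf g DR R}_i}(T)$: if $T-\lambda\in{\bf R}_i$, then the trivial reduction $X=X\oplus\{0\}$, whose second summand carries the zero operator (vacuously Riesz), exhibits $T-\lambda$ as a member of ${\bf g DR R}_i(X)$; hence $\sigma_{{\bf g DR R}_i}(T)\subseteq\sigma_{{\bf R}_i}(T)$. The inclusion $\sigma_{{\bf R}_i}(T)\subseteq\sigma(T)$ is immediate since $L(X)^{-1}\subseteq{\bf R}_i$ for every $i$, so $\rho(T)\subseteq\rho_{{\bf R}_i}(T)$.

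For (ii), boundedness is already contained in (i). For closedness I would observe that, by the quoted theorems, for every $1\le i\le 12$ one may write $\sigma_{{\bf g DR R}_i}(T)=\sigma_{gKR}(T)\cup\acc\,\sigma_{{\bf S}_i}(T)$, where ${\bf S}_i$ is $\mathcal{B}_+$, $\mathcal{B}_-$, $\mathcal{B}$ for $i=1,2,3$ and ${\bf S}_i={\bf R}_i$ for $4\le i\le 12$. Since $\sigma_{gKR}(T)$ is compact and the derived set $\acc\,\sigma_{{\bf S}_i}(T)$ of the bounded set $\sigma_{{\bf S}_i}(T)$ is closed and bounded, the union is closed and bounded, hence compact.

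For (iii), restrict to $4\le i\le 12$, where the identity above gives $\acc\,\sigma_{{\bf R}_i}(T)\subseteq\sigma_{{\bf g DR R}_i}(T)$, so that
\[
\sigma_{{\bf R}_i}(T)\setminus\sigma_{{\bf g DR R}_i}(T)\ \subseteq\ \sigma_{{\bf R}_i}(T)\setminus\acc\,\sigma_{{\bf R}_i}(T)\ =\ \iso\,\sigma_{{\bf R}_i}(T).
\]
Every point of the left-hand set is thus isolated in $\sigma_{{\bf R}_i}(T)$, a fortiori isolated within the set itself, and any subset of $\CC$ all of whose points are isolated is at most countable (being a discrete subspace of a separable metric space); this is exactly the claim. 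The only delicate point throughout (ii)--(iii) is the bookkeeping: one must use the accumulation-set form of $\sigma_{{\bf g DR R}_i}(T)$ (not the interior form, which is open and would not give closedness), and one must pair $i=1,2,3$ with the semi-Browder spectra rather than with $\sigma_{ap}$, $\sigma_{su}$, $\sigma$. This also explains why (iii) is stated only for $4\le i\le 12$: for $i=1$, a Riesz operator with infinite spectrum has $\sigma_{{\bf g DR R}_1}(T)=\emptyset$ whereas $\sigma_{ap}(T)=\sigma(T)$ is infinite and not discrete, so the analogue of (iii) fails there.
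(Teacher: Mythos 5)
Your proposal is correct, and for parts (i) and (iii) it is essentially the paper's argument: (i) is dismissed as obvious there (your trivial reduction $X=X\oplus\{0\}$ is exactly the right justification), and for (iii) the paper likewise deduces from Theorem \ref{t1} (and its analogues) that any $\lambda\in\sigma_{{\bf R}_i}(T)\setminus\sigma_{{\bf gDRR}_i}(T)$ must lie in $\iso\,\sigma_{{\bf R}_i}(T)$, hence the difference is at most countable. The only genuine divergence is in (ii): the paper proves openness of the complement directly, by taking $\lambda_0\notin\sigma_{{\bf gDRR}_i}(T)$, extracting from the accumulation-point characterization a punctured disc on which $T-\lambda_0-\lambda$ lies in the (open) class ${\bf R}_j\subset{\bf gDRR}_i(X)$ of the relevant semi-Browder/Fredholm type, and concluding that the whole disc $D(\lambda_0,\epsilon)$ avoids $\sigma_{{\bf gDRR}_i}(T)$; you instead read off closedness from the identity $\sigma_{{\bf gDRR}_i}(T)=\sigma_{gKR}(T)\cup\acc\,\sigma_{{\bf S}_i}(T)$, using the compactness of $\sigma_{gKR}(T)$ and the fact that derived sets are closed. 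Both routes rest on Theorems \ref{t1}--\ref{t4}; yours is shorter but imports Theorem \ref{closed} and its corollary as an extra ingredient, while the paper's is self-contained modulo the characterization theorems. Your bookkeeping (pairing $i=1,2,3$ with $\B_+,\B_-,\B$ and $i=4,\dots,12$ with ${\bf R}_i$ itself) is accurate, and your closing remark on why (iii) is restricted to $i\ge 4$ is sound, with the small caveat that in the Riesz-with-infinite-spectrum example the set $\sigma_{ap}(T)\setminus\sigma_{{\bf gDR}\M}(T)=\sigma_{ap}(T)$ is still countable; what fails is only that $0$ is not an isolated point of it.
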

\begin{proof}
{\rm (i):} It is obvious.\par \noindent {\rm (ii):} It suffices to
prove that the complement of $\sigma_{{\bf g DR R}_i}(T)$ is open. If $\lambda_0 \not \in \sigma_{{\bf g D
R}\M}(T)$, then $T-\lambda_0 \in {\bf g D R}\M(X)$ and by Theorem
\ref{t1} there exists $\epsilon>0$ such that $T-\lambda_0-\lambda
\in {\bf \B_+}(X)\subset {\bf g D R}\B_+(X)={\bf {g D R}\M}(X) $ for $0<|\lambda|<\epsilon$. Thus, $T-\lambda_0-\lambda
\in {\bf {g D R}\M}(X) $ for each $\lambda$ such that $|\lambda|<\epsilon$, that is
 $D(\lambda_0,\epsilon)\subset\CC\setminus\sigma_{{\bf g D R}\M}(T)$. Therefore, $\CC\setminus\sigma_{{\bf g D R}\M}(T)$ is open.
For the other cases we apply similar consideration.
\par \noindent {\rm (iii):} If $\lambda\in \sigma_{\B_+}(T) \setminus \sigma_{{\bf
g D R}\B_+}(T)$, then $\lambda \in \sigma_{\B_+}(T)$ and
$T-\lambda\in {\bf g D R}\B_+(X)$. From Theorem \ref{t1} we
obtain that  $\lambda \in  \iso \, \sigma_{\B_+}(T)$ and thus,
$\sigma_{\B_+}(T) \setminus \sigma_{{\bf g D R}\B_+}(T)$ consists of
at most countably many isolated points. Similarly for the other cases when $5\le i\le 12$.
\end{proof}

\begin{corollary}\label{r=}
Let $T\in L(X)$  and $i\in\{1,\dots,12\}$. If $T$  is an operator
for which
 \begin{equation}\label{i4}
  \sigma_{{\bf R}_i}(T)=\partial\sigma_{{\bf R}_i}(T),
\end{equation}
 then
\begin{equation}\label{i14}
  \sigma_{gKR}(T)=\sigma_{{\bf gDRR}_i}(T).
\end{equation}
In particular, if
\begin{equation}\label{i}
  \sigma(T)=\partial\sigma (T),
\end{equation}
 then
\begin{equation}\label{i1}
  \sigma_{gKR}(T)=\sigma_{gDR}(T).
\end{equation}
\end{corollary}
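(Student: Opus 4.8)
The plan is to reduce the statement to a single structural identity for the generalized Drazin--Riesz spectra that is, in essence, already assembled in the paper: for every $i$ with $1\le i\le 12$,
\[
\sigma_{{\bf gDRR}_i}(T)=\sigma_{gKR}(T)\cup{\rm int}\,\sigma_{{\bf R}_i}(T).
\]
First I would record why this holds uniformly. For $i\in\{1,2,3\}$ it is exactly the first line of \eqref{n0}, \eqref{n1}, \eqref{n2} respectively, once one recalls that ${\bf gDRR}_1(X)={\bf gDR}\M(X)$ with $\sigma_{{\bf R}_1}(T)=\sigma_{ap}(T)$, ${\bf gDRR}_2(X)={\bf gDR}\Q(X)$ with $\sigma_{{\bf R}_2}(T)=\sigma_{su}(T)$, and ${\bf gDRR}_3(X)={\bf gDR}L(X)^{-1}$ with $\sigma_{{\bf R}_3}(T)=\sigma(T)$. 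For $i\in\{7,\dots,12\}$ it is precisely \eqref{glava-}. For $i\in\{4,5,6\}$ I would combine the second lines of \eqref{n0}, \eqref{n1}, \eqref{n2} (which read $\sigma_{\bf gDR\M}(T)=\sigma_{gKR}(T)\cup{\rm int}\,\sigma_{\B_+}(T)$, and analogously with $\B_-$, $\B$) with the identifications $\sigma_{{\bf gDRR}_4}(T)=\sigma_{\bf gDR\M}(T)$, $\sigma_{{\bf gDRR}_5}(T)=\sigma_{\bf gDR\Q}(T)$, $\sigma_{{\bf gDRR}_6}(T)=\sigma_{gDR}(T)$ supplied by the equivalences (i)$\Leftrightarrow$(iv) in Theorems \ref{t1}, \ref{t2} and (i)$\Leftrightarrow$(vii) in Theorem \ref{t3}, together with $\sigma_{{\bf R}_4}(T)=\sigma_{\B_+}(T)$, $\sigma_{{\bf R}_5}(T)=\sigma_{\B_-}(T)$, $\sigma_{{\bf R}_6}(T)=\sigma_{\B}(T)$.

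Next I would invoke the elementary point-set fact that for any $K\subseteq\CC$ one has ${\rm int}\,K\subseteq K$ and ${\rm int}\,K\cap\partial K=\emptyset$; hence if $K=\partial K$, then ${\rm int}\,K\subseteq K\cap\partial K=\emptyset$, so ${\rm int}\,K=\emptyset$. Applying this with $K=\sigma_{{\bf R}_i}(T)$ and using hypothesis \eqref{i4} gives ${\rm int}\,\sigma_{{\bf R}_i}(T)=\emptyset$, whereupon the displayed identity collapses to $\sigma_{{\bf gDRR}_i}(T)=\sigma_{gKR}(T)$, which is \eqref{i14}.

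Finally, for the particular assertion I would simply specialize to $i=3$: under \eqref{i} we have $\sigma(T)=\sigma_{{\bf R}_3}(T)=\partial\sigma_{{\bf R}_3}(T)$, so \eqref{i14} yields $\sigma_{gKR}(T)=\sigma_{{\bf gDRR}_3}(T)=\sigma_{gDR}(T)$, i.e. \eqref{i1}. I do not expect a genuine obstacle here; the argument is a short deduction from the already-established formulas plus a one-line topological remark. The only point needing a little care is assembling the uniform identity $\sigma_{{\bf gDRR}_i}(T)=\sigma_{gKR}(T)\cup{\rm int}\,\sigma_{{\bf R}_i}(T)$ for the ``Browder'' indices $i=4,5,6$, where one must first pass through the spectrum identifications coming from Theorems \ref{t1}--\ref{t3}.
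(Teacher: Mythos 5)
Your argument is correct and is essentially the paper's own proof: the hypothesis $\sigma_{{\bf R}_i}(T)=\partial\sigma_{{\bf R}_i}(T)$ forces ${\rm int}\,\sigma_{{\bf R}_i}(T)=\emptyset$, and then the identities \eqref{n0}, \eqref{n1}, \eqref{n2} and \eqref{glava-} collapse $\sigma_{{\bf gDRR}_i}(T)=\sigma_{gKR}(T)\cup{\rm int}\,\sigma_{{\bf R}_i}(T)$ to $\sigma_{gKR}(T)$. Your extra care in routing the Browder indices $i=4,5,6$ through the identifications ${\bf gDRR}_4(X)={\bf gDR}\M(X)$, etc., from Theorems \ref{t1}--\ref{t3} only makes explicit what the paper leaves implicit.
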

\begin{proof} From \eqref{i4} it follows that ${\rm int} \sigma_{{\bf R}_i}(T)=\emptyset$ and so, according to
  (\ref{n0}), (\ref{n1}), (\ref{n2}), (\ref{glava-}), we get $\sigma_{{\bf gDR
R}_i}(T)=\sigma_{gKR}(T)$.
\end{proof}

From Corollary \ref{r=} it follows that if $\sigma(T)$ is most
countable or  contained in a line, then
$\sigma_{gKR}(T)=\sigma_{gDR}(T)$. Every  self-adjoint, as well as,
unitary operator on Hilbert space have the spectrum  contained in a
line.  The spectrum   of a polynomially Riesz operator \cite{ZDHB} or
polynomially meromorphic operator \cite{KaashoekPRIA} is most countable.

\bigskip

Obviously, $\sigma_{gKR}(T)\subset \sigma_{gK}(T)$ and the following corollary is an improvement of   \cite[Corollary 4.2, (iv), (v), (vi)]{ZC}.
\begin{corollary} \label{cor5}
For $T \in L(X)$ the following inclusions hold:
\begin{eqnarray*}
\inter \, \sigma_{ap}(T) \setminus \inter \, \sigma_{\mathcal{B}_+}(T) &\subset& \sigma_{gKR}(T), \\
  \inter \, \sigma_{su}(T) \setminus \inter \, \sigma_{\mathcal{B}_-}(T) &\subset& \sigma_{gKR}(T), \\
  \inter \, \sigma(T) \setminus \inter \, \sigma_{\mathcal{B}}(T) &\subset& \sigma_{gKR}(T).
  \end{eqnarray*}
 \end{corollary}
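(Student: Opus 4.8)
The plan is to establish each of the three inclusions by the same argument, treating in detail only the first one, say $\inter\,\sigma_{ap}(T)\setminus\inter\,\sigma_{\mathcal B_+}(T)\subset\sigma_{gKR}(T)$, and then remarking that the other two follow verbatim after replacing the pair $(\sigma_{ap},\sigma_{\mathcal B_+})$ by $(\sigma_{su},\sigma_{\mathcal B_-})$ and by $(\sigma,\sigma_{\mathcal B})$, using Theorem~\ref{t2} and Theorem~\ref{t3} respectively in place of Theorem~\ref{t1}. So I will argue by contraposition: I fix $\lambda_0\notin\sigma_{gKR}(T)$, i.e.\ $\lambda_0-T$ admits a GKRD, and show that $\lambda_0\notin\inter\,\sigma_{ap}(T)\setminus\inter\,\sigma_{\mathcal B_+}(T)$, equivalently that $\lambda_0\in\inter\,\sigma_{\mathcal B_+}(T)$ whenever $\lambda_0\in\inter\,\sigma_{ap}(T)$.

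The key step is to invoke the equivalences already packaged in Theorem~\ref{t1}. Suppose $\lambda_0-T$ admits a GKRD and $\lambda_0\in\inter\,\sigma_{ap}(T)$. If additionally $\lambda_0\notin\inter\,\sigma_{\mathcal B_+}(T)$, then by the equivalence (i)$\Longleftrightarrow$(vi) in Theorem~\ref{t1} (applied to $T-\lambda_0$, noting $\sigma_{\mathcal B_+}(T-\lambda_0)=\sigma_{\mathcal B_+}(T)-\lambda_0$ and similarly for the other spectra), $T-\lambda_0\in{\bf gDR}\mathcal M(X)$, and then the equivalence (i)$\Longleftrightarrow$(ii) of the same theorem forces $\lambda_0\notin\inter\,\sigma_{ap}(T)$, a contradiction. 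Hence $\lambda_0\in\inter\,\sigma_{\mathcal B_+}(T)$, which is exactly the desired conclusion $\lambda_0\notin\inter\,\sigma_{ap}(T)\setminus\inter\,\sigma_{\mathcal B_+}(T)$. Alternatively, and even more directly, one can read this off the identity \eqref{n0}: $\sigma_{\bf gDR\mathcal M}(T)=\sigma_{gKR}(T)\cup\inter\,\sigma_{ap}(T)=\sigma_{gKR}(T)\cup\inter\,\sigma_{\mathcal B_+}(T)$, so that $\inter\,\sigma_{ap}(T)\subset\sigma_{gKR}(T)\cup\inter\,\sigma_{\mathcal B_+}(T)$, which rearranges to the claimed set inclusion.

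For the second inclusion I would use \eqref{n1} in place of \eqref{n0}, namely $\sigma_{\bf gDR\mathcal Q}(T)=\sigma_{gKR}(T)\cup\inter\,\sigma_{su}(T)=\sigma_{gKR}(T)\cup\inter\,\sigma_{\mathcal B_-}(T)$, and for the third inclusion I would use \eqref{n2}, namely $\sigma_{gDR}(T)=\sigma_{gKR}(T)\cup\inter\,\sigma(T)=\sigma_{gKR}(T)\cup\inter\,\sigma_{\mathcal B}(T)$; in each case intersecting the two descriptions of the same left-hand side and rearranging gives the stated inclusion. I do not anticipate any real obstacle here: the entire content has already been done in Theorems~\ref{t1}, \ref{t2}, \ref{t3} and distilled into the displayed identities \eqref{n0}--\eqref{n2}, so the proof is a two-line set-theoretic manipulation. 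The only point requiring a word of care is the translation by $\lambda_0$ — that all the spectra involved ($\sigma_{ap}$, $\sigma_{su}$, $\sigma$, $\sigma_{\mathcal B_+}$, $\sigma_{\mathcal B_-}$, $\sigma_{\mathcal B}$, and $\sigma_{gKR}$) commute with subtraction of a scalar, and that "$\lambda_0\in\inter A$" is equivalent to "$0\in\inter(A-\lambda_0)$" — which is routine and can be stated in one sentence.
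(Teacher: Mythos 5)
Your argument is correct and is essentially the paper's own proof: the paper simply cites the equivalences (ii)$\Longleftrightarrow$(vi) of Theorems~\ref{t1} and~\ref{t2} and (ii)$\Longleftrightarrow$(ix) of Theorem~\ref{t3}, which is exactly the content you extract (equivalently, your rearrangement of the identities \eqref{n0}--\eqref{n2}). No gaps; the remark about translating by $\lambda_0$ is the only bookkeeping needed and you handle it.
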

\begin{proof}
Follows from the equivalences  {\rm (ii)} $\Longleftrightarrow  $
{\rm (vi)} in
Theorems \ref{t1} and \ref{t2}, and the equivalence {\rm (ii)} $\Longleftrightarrow  $
{\rm (ix)} in Theorem \ref{t3}.
\end{proof}
\begin{corollary}\label{SVEP}  Let $T\in L(X)$ have  the SVEP. Then all accumulation points of $\sigma_{\B_+}(T)$ belong to $\sigma_{gKR}(T)$.
\end{corollary}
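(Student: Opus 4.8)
The plan is to deduce the inclusion $\acc\,\sigma_{\B_+}(T)\subset\sigma_{gKR}(T)$ directly from Theorem \ref{t1}, namely from the equivalence of conditions {\rm (iii)} and {\rm (v)} applied to the translate $T-\lambda_0$. First I would fix an arbitrary $\lambda_0\in\acc\,\sigma_{\B_+}(T)$ and argue by contradiction: suppose $\lambda_0\notin\sigma_{gKR}(T)$, so that $T-\lambda_0$ admits a GKRD. Since $T$ has the SVEP, it has the SVEP at $\lambda_0$, and this is precisely the statement that $T-\lambda_0$ has the SVEP at $0$. Hence $T-\lambda_0$ satisfies condition {\rm (iii)} of Theorem \ref{t1}.

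Next I would invoke the implication {\rm (iii)}$\Longrightarrow${\rm (v)} of Theorem \ref{t1} for the operator $T-\lambda_0$, which yields $0\notin\acc\,\sigma_{\B_+}(T-\lambda_0)$. Using the elementary identity $\sigma_{\B_+}(T-\lambda_0)=\sigma_{\B_+}(T)-\lambda_0$, this says $\lambda_0\notin\acc\,\sigma_{\B_+}(T)$, which contradicts the choice of $\lambda_0$. Therefore $\lambda_0\in\sigma_{gKR}(T)$, and since $\lambda_0$ was an arbitrary accumulation point of $\sigma_{\B_+}(T)$, the desired inclusion follows.

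There is essentially no obstacle in this argument; the only points demanding a little care are the routine translation between ``$T$ has the SVEP at $\lambda_0$'' and ``$T-\lambda_0$ has the SVEP at $0$'', and the bookkeeping $\sigma_{\B_+}(T-\lambda_0)=\sigma_{\B_+}(T)-\lambda_0$, which is what allows Theorem \ref{t1} (stated at the point $0$) to be applied at $\lambda_0$. One could equally route the argument through the chain {\rm (iii)}$\Longleftrightarrow${\rm (i)}$\Longleftrightarrow${\rm (v)} of Theorem \ref{t1}, but the single implication {\rm (iii)}$\Longrightarrow${\rm (v)} is the shortest path and suffices.
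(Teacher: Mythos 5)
Your argument is correct and is exactly the route the paper takes: the paper's proof is the one-line remark that the corollary ``follows from the equivalence (iii)$\Longleftrightarrow$(v) of Theorem \ref{t1},'' and your write-up simply makes explicit the translation to the operator $T-\lambda_0$ and the contrapositive reading of (iii)$\Longrightarrow$(v). Nothing is missing.
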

\begin{proof} Follows from the equivalence (iii)$\Longleftrightarrow$(v) of Theorem
\ref{t1}.
\end{proof}

\begin{corollary}\label{SVEP1}  Suppose that for $T\in L(X)$, $T^\prime$ has  the SVEP. Then all accumulation points of $\sigma_{\B_-}(T)$ belong to $\sigma_{gKR}(T)$.
\end{corollary}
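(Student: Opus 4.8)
The plan is to mirror, verbatim, the one-line argument given for Corollary \ref{SVEP}, but working on the ``surjective side'' rather than the ``approximate-point side''. The single ingredient needed is the equivalence {\rm (iii)}$\Longleftrightarrow${\rm (v)} in Theorem \ref{t2}, read at an arbitrary point $\lambda_0$: replacing $T$ by $T-\lambda_0$ turns ``$T$ admits a GKRD'' into ``$\lambda_0-T$ admits a GKRD'' (i.e. $\lambda_0\notin\sigma_{gKR}(T)$), turns ``$T^\prime$ has the SVEP at $0$'' into ``$T^\prime$ has the SVEP at $\lambda_0$'', and turns ``$0\notin\acc\,\sigma_{\mathcal{B}_-}(T)$'' into ``$\lambda_0\notin\acc\,\sigma_{\mathcal{B}_-}(T)$''. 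So Theorem \ref{t2} says precisely that, for any $\lambda_0$ at which $\lambda_0-T$ admits a GKRD, $T^\prime$ has the SVEP at $\lambda_0$ if and only if $\lambda_0\notin\acc\,\sigma_{\mathcal{B}_-}(T)$.

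First I would argue by contraposition. Let $\lambda_0\notin\sigma_{gKR}(T)$, so $\lambda_0-T$ admits a GKRD. Since $T^\prime$ has the SVEP, it has in particular the SVEP at $\lambda_0$, so both hypotheses of condition {\rm (iii)} of Theorem \ref{t2} (with distinguished point $\lambda_0$) hold; hence condition {\rm (v)} holds, which gives $\lambda_0\notin\acc\,\sigma_{\mathcal{B}_-}(T)$. Equivalently, every accumulation point of $\sigma_{\mathcal{B}_-}(T)$ must lie in $\sigma_{gKR}(T)$, i.e. $\acc\,\sigma_{\mathcal{B}_-}(T)\subset\sigma_{gKR}(T)$, which is the assertion.

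I do not expect any real obstacle: all of the content has already been absorbed into Theorem \ref{t2}, which itself rests on Proposition \ref{gap0}{\rm (i)}--{\rm (ii)} together with the duality statement Proposition \ref{decomp}. The only points to state carefully are the translation of the reference point from $0$ to $\lambda_0$ and the (trivial) remark that global SVEP of $T^\prime$ entails SVEP of $T^\prime$ at $\lambda_0$. In the write-up it therefore suffices to say: the claim follows from the equivalence {\rm (iii)}$\Longleftrightarrow${\rm (v)} of Theorem \ref{t2}.
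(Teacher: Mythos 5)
Your proposal is correct and is exactly the paper's argument: the paper's proof of this corollary is the single line ``Follows from the equivalence (iii)$\Longleftrightarrow$(v) of Theorem \ref{t2}.'' Your contrapositive unpacking (translating the reference point from $0$ to $\lambda_0$ and noting that global SVEP of $T^\prime$ gives SVEP at each $\lambda_0$) is just the intended reading of that line, so there is nothing to add.
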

\begin{proof} Follows from the equivalence (iii)$\Longleftrightarrow$(v) of Theorem
\ref{t2}.
\end{proof}
\begin{corollary}\label{SVEP2}  Suppose that  both $T$ and $T^\prime$ have  the SVEP. Then all accumulation points of $\sigma_{\B}(T)$ belong to $\sigma_{gKR}(T)$.
\end{corollary}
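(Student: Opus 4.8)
The plan is to reduce Corollary~\ref{SVEP2} to the previously established Theorem~\ref{t3} in exactly the same way that Corollaries~\ref{SVEP} and \ref{SVEP1} were reduced to Theorems~\ref{t1} and \ref{t2}. So let $\lambda_0\in\acc\,\sigma_{\mathcal B}(T)$ and suppose, for contradiction, that $\lambda_0\notin\sigma_{gKR}(T)$, i.e. $\lambda_0-T$ admits a GKRD. We are also assuming that both $T$ and $T'$ have the SVEP, hence in particular both have the SVEP at $\lambda_0$.

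The key step is to invoke the equivalence (iii)$\Longleftrightarrow$(viii) of Theorem~\ref{t3}, applied to the operator $\lambda_0-T$ (whose adjoint is $\lambda_0-T'$, and SVEP at $0$ of $\lambda_0-T$, $\lambda_0-T'$ is the same as SVEP at $\lambda_0$ of $T$, $T'$). Statement (iii) there says: $\lambda_0-T$ admits a GKRD and both $\lambda_0-T$ and $(\lambda_0-T)'$ have the SVEP at $0$. By our hypotheses this holds. Statement (viii) then says: $\lambda_0-T$ admits a GKRD and $0\notin\acc\,\sigma_{\mathcal B}(\lambda_0-T)$, i.e. $\lambda_0\notin\acc\,\sigma_{\mathcal B}(T)$. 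This contradicts the choice of $\lambda_0$. Hence every accumulation point of $\sigma_{\mathcal B}(T)$ lies in $\sigma_{gKR}(T)$.

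I expect no real obstacle here; the only points to get right are the bookkeeping identities $\sigma(\lambda_0-T)-\lambda = \sigma(T)-(\lambda_0-\lambda)$ translating ``$0\notin\acc$'' for the shifted operator into ``$\lambda_0\notin\acc$'' for $T$, and the observation that the adjoint of $\lambda_0-T$ is $\lambda_0-T'$ so that the SVEP-at-$0$ hypotheses of Theorem~\ref{t3}(iii) are precisely supplied by the global SVEP of $T$ and $T'$. Thus the proof is a one-line appeal:

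\begin{proof} Let $\lambda_0\in\acc\,\sigma_{\mathcal B}(T)$ and suppose $\lambda_0-T$ admits a GKRD. Since $T$ and $T^\prime$ have the SVEP, $\lambda_0-T$ and $(\lambda_0-T)^\prime=\lambda_0-T^\prime$ have the SVEP at $0$, so condition (iii) of Theorem \ref{t3} holds for $\lambda_0-T$. By the equivalence (iii)$\Longleftrightarrow$(viii) of Theorem \ref{t3} we get $0\notin\acc\,\sigma_{\mathcal B}(\lambda_0-T)$, i.e. $\lambda_0\notin\acc\,\sigma_{\mathcal B}(T)$, a contradiction. Hence $\lambda_0\in\sigma_{gKR}(T)$.
\end{proof}
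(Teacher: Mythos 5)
Your proof is correct and is exactly the paper's argument: the paper proves this corollary by the single sentence ``Follows from the equivalence (iii)$\Longleftrightarrow$(viii) of Theorem \ref{t3}.'' Your write-up merely makes explicit the routine translation between the operator $\lambda_0-T$ at $0$ and the operator $T$ at $\lambda_0$, which is fine.
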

\begin{proof} Follows from the equivalence (iii)$\Longleftrightarrow$(viii) of Theorem
\ref{t3}.
\end{proof}
\begin{corollary} \label{example1}
Let $T$ be unilateral weighted right shift operator on $\ell_p(\NN)$, $1\le p<\infty$, with weight $(\omega_n)$, and let $c(T)=\lim\limits_{n\to\infty}\inf (\omega_1\cdot\dots\cdot \omega_n)^{1/n}=0$. Then $\sigma_{gKR}(T)=\sigma_{gDR{\bf R}_i}(T)= \sigma(T)=\overline{D(0,r(T))}$.
\end{corollary}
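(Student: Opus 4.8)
\textbf{Proof proposal for Corollary \ref{example1}.}
The plan is to compute each of the four objects in the chain $\sigma_{gKR}(T)\subset\sigma_{gDR{\bf R}_i}(T)\subset\sigma(T)=\overline{D(0,r(T))}$ and show that the extremes coincide. First I would recall the classical description of the spectrum of a unilateral weighted right shift $T$ on $\ell_p(\NN)$: writing $r(T)=\lim_n(\omega_1\cdots\omega_n)^{1/n}$ for the spectral radius and $c(T)=\liminf_n(\omega_1\cdots\omega_n)^{1/n}$, one has $\sigma(T)=\overline{D(0,r(T))}$, $\sigma_{ap}(T)=\{\lambda:c(T)\le|\lambda|\le r(T)\}$, and $\sigma_{su}(T)=\overline{D(0,r(T))}$ (the right shift is never surjective, and in fact $0\in\sigma_{su}(T)$ always; more precisely $\sigma_{su}(T)$ is the full disc). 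Under the hypothesis $c(T)=0$ these give $\sigma_{ap}(T)=\sigma_{su}(T)=\sigma(T)=\overline{D(0,r(T))}$, so in particular $\sigma_{ap}(T)=\partial\sigma(T)$ fails and we cannot argue via Corollary \ref{r=}; instead the point is that $0$ is an interior point of $\sigma_{ap}(T)$, so by Theorem \ref{t1} (equivalence (i)$\Longleftrightarrow$(ii)) $T\notin{\bf gDR}\M(X)$.

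Next I would push this interiority statement to every $\lambda$ in the open disc $D(0,r(T))$. For any $\lambda_0$ with $|\lambda_0|<r(T)$, the translate $T-\lambda_0$ again has $\sigma_{ap}(T-\lambda_0)=\sigma_{ap}(T)-\lambda_0$ containing a neighbourhood of $0$, hence $0\in\inter\,\sigma_{ap}(T-\lambda_0)$; by the chain of inclusions \eqref{inkluzija}--\eqref{inkluzija--} and the fact (from Theorems \ref{t1}, \ref{t2}, \ref{t4}) that membership in ${\bf gDRR}_i(X)$ forces $0\notin\inter\,\sigma_{ap}$ (resp. $\inter\,\sigma_{su}$) for the relevant $i$, we conclude $\lambda_0\in\sigma_{gDR{\bf R}_i}(T)$, and moreover $\lambda_0\in\sigma_{gKR}(T)$: indeed one checks that $T-\lambda_0$ cannot admit a GKRD, for if $(M,N)\in Red(T-\lambda_0)$ with $(T-\lambda_0)_M$ Kato and $(T-\lambda_0)_N$ Riesz, then by Theorem \ref{t1} ((i)$\Leftrightarrow$(iii)) the failure would have to come from the SVEP, but $T-\lambda_0$ does have SVEP at $\lambda_0$ whenever $T$ has SVEP there — so I instead rule out a GKRD directly: a weighted shift has no nontrivial reducing subspaces pairs of this type in the relevant range because $\inter\,\sigma_{gK}$ issues. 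The cleaner route, which I would actually take, is: since every $\lambda$ with $|\lambda|<r(T)$ is an interior point of $\sigma_{ap}(T)$ and of $\sigma_{su}(T)$, Corollary \ref{cor5} does not directly apply, so I appeal instead to the observation that $0\in\inter\,\sigma_{ap}(T)$ together with Theorem \ref{t1} gives, for $\lambda$ with $T-\lambda$ admitting a GKRD, that $T-\lambda$ would be in ${\bf gDR}\M(X)$ only if $\lambda\notin\inter\,\sigma_{ap}(T)$; combining with the analogous surjective statement, the open disc $D(0,r(T))$ is forced into $\sigma_{gKR}(T)$ once we know each such $T-\lambda$ does \emph{not} admit a GKRD at all, which follows because $\lambda-T$ essentially Kato fails on a full neighbourhood (using that the essentially Kato spectrum of a weighted shift with $c(T)=0$ is the whole closed disc, by Theorem \ref{closed} read contrapositively).

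Finally I would close the disc: $\sigma_{gKR}(T)$ is closed by Theorem \ref{closed} (more precisely by its Corollary, stating $\sigma_{gKR}(T)$ is compact), so from $D(0,r(T))\subset\sigma_{gKR}(T)$ we get $\overline{D(0,r(T))}\subset\sigma_{gKR}(T)$, and the reverse inclusion $\sigma_{gKR}(T)\subset\sigma(T)=\overline{D(0,r(T))}$ is the universal inclusion from Proposition \ref{cor1}(i) together with $\sigma_{gKR}(T)\subset\sigma(T)$ noted after Definition 1.2. Since all twelve spectra $\sigma_{gDR{\bf R}_i}(T)$ are sandwiched between $\sigma_{gKR}(T)$ and $\sigma(T)$ by \eqref{inkluzija}--\eqref{inkluzija--} and \eqref{inkluzija--}, the equality $\sigma_{gKR}(T)=\sigma(T)$ forces $\sigma_{gDR{\bf R}_i}(T)=\sigma(T)$ for every $i$, which is the claim. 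The main obstacle I anticipate is the middle step: showing that no $T-\lambda$ with $|\lambda|<r(T)$ admits a GKRD at all, rather than merely failing to be generalized Drazin--Riesz bounded below — i.e. ruling out the Kato-plus-Riesz splitting. I expect to handle this by invoking Theorem \ref{closed}: if $T-\lambda_0$ admitted a GKRD, then $T-\lambda$ would be essentially Kato for all $\lambda$ near $\lambda_0$ with $\lambda\ne\lambda_0$; but for a weighted right shift with $c(T)=0$ the essentially Kato resolvent set is contained in $\{|\lambda|>r(T)\}\cup\{0\}$-type exceptional sets — concretely, $\sigma_{eK}(T)=\overline{D(0,r(T))}$ — contradicting that $\lambda_0$ lies in the open disc. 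Verifying $\sigma_{eK}(T)=\overline{D(0,r(T))}$ for this shift (equivalently, that $T-\lambda$ is not semi-Fredholm for $0<|\lambda|<r(T)$, since $\sigma_{ap}(T)\cap\sigma_{su}(T)=\overline{D(0,r(T))}$ and one uses that semi-Fredholmness plus index considerations fail throughout the disc) is the computational heart of the argument.
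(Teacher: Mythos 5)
You correctly isolate the crux of the matter --- that one must show $\lambda-T$ admits \emph{no} GKRD for every $\lambda$ in the closed disc, not merely that $\lambda-T$ fails to lie in ${\bf gDRR}_i(X)$ --- but your proposed resolution of that crux does not work, and you leave it unproved. Your plan is to show $\sigma_{eK}(T)=\overline{D(0,r(T))}$ and then invoke Theorem \ref{closed} contrapositively; however, you reduce this to showing that $T-\lambda$ is not semi-Fredholm on the open disc, calling the two statements ``equivalent''. They are not: semi-Fredholm operators form a \emph{proper subclass} of essentially Kato operators (\cite[Theorem 16.21]{Mu}), so non-semi-Fredholmness is necessary but not sufficient for an operator to fail to be essentially Kato (a Kato operator with infinite-dimensional kernel and infinite-codimensional range is essentially Kato but not semi-Fredholm). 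So the ``computational heart'' of your argument rests on a false reduction, and the earlier attempts in the same paragraph (ruling out reducing pairs ``because $\inter\,\sigma_{gK}$ issues'') are not arguments.

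The ingredient you are missing is the SVEP, and ironically you brush against it and discard it: you note that ``$T-\lambda_0$ does have SVEP'' as if this were an obstacle, when it is exactly the lever. The paper's proof is two lines: by \cite[Corollary 3.118]{Ai}, $\sigma_{ap}(T)=\sigma_{\B_+}(T)=\sigma(T)=\overline{D(0,r(T))}$ \emph{and} $T$ has the SVEP. By the equivalence (iii)$\Longleftrightarrow$(v) of Theorem \ref{t1} (packaged as Corollary \ref{SVEP}), if $T$ has the SVEP at $\lambda_0$ and $\lambda_0-T$ admitted a GKRD, then $\lambda_0\notin\acc\,\sigma_{\B_+}(T)$; since here every point of the closed disc is an accumulation point of $\sigma_{\B_+}(T)$, no such $\lambda_0$ can admit a GKRD, whence $\overline{D(0,r(T))}\subset\sigma_{gKR}(T)\subset\sigma(T)$. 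The squeeze of the intermediate spectra $\sigma_{{\bf gDRR}_i}(T)$ via \eqref{inkluzija}--\eqref{inkluzija--}, which you do carry out correctly, then finishes the proof. If you want to salvage your essentially-Kato route, you would likewise have to use the SVEP (via the Aiena--Rosas/Jiang--Zhong results for Kato-type operators) rather than a semi-Fredholm computation.
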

\begin{proof} According to \cite[Corollary 3.118]{Ai} it follows that
 $\sigma_{ap}(T)=\sigma_{\B_+}(T)
=\sigma(T)=\overline{D(0,r(T))}$  and $T$ 
has the SVEP. Since every $\lambda\in \sigma(T)$ is an accumulation point of $\sigma_{\B_+}(T)$, according to Corollary  \ref{SVEP} it follows that $\sigma_{gKR}(T)=\sigma_{{\bf gDR R}_i}(T)= \sigma(T)=\overline{D(0,r(T))}$.
\end{proof}

\begin{theorem}\label{partial0} Let $T\in L(X)$ and $4\le  i\le 12$. Then the following implication holds:
\begin{equation}\label{pos0}
  0\in\partial\sigma_{{\bf R}_i}(T)\ \wedge\ T\ {\rm admits\ a\ GKRD}\Longrightarrow 0\in{\iso}\, \sigma_{{\bf R}_i}(T).
\end{equation}
Moreover,
\begin{eqnarray}
  0\in\partial\sigma_{ap}(T)\ \wedge\ T\ {\rm admits\ a\ GKRD} &\Longrightarrow& 0\notin{\acc}\, \sigma_{\B_+}(T);\label{pos1} \\
   0\in\partial\sigma_{su}(T)\ \wedge\ T\ {\rm admits\ a\ GKRD} &\Longrightarrow& 0\notin{\acc}\, \sigma_{\B_-}(T);\label{pos2} \\
    0\in\partial\sigma(T)\ \wedge\ T\ {\rm admits\ a\ GKRD} &\Longrightarrow& 0\notin{\acc}\, \sigma_{\B}(T).\label{pos3}
\end{eqnarray}
 \end{theorem}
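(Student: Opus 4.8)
The plan is to read everything off from the interior-versus-accumulation equivalences already established in Theorems \ref{t1}, \ref{t2}, \ref{t3} and \ref{t4}. The two ingredients I will use are: (a) for any subset $S\subseteq\CC$ one always has $\partial S\cap\inter S=\emptyset$, and if moreover $S$ is closed then $\partial S\subseteq S$; and (b) each of the classes ${\bf R}_i$, $4\le i\le 12$, is open in $L(X)$, so each spectrum $\sigma_{{\bf R}_i}(T)$ is closed (this openness is already exploited in the proofs of Theorems \ref{t1}--\ref{t4}).

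First I would prove \eqref{pos0}. Fix $i$ with $4\le i\le 12$ and assume $0\in\partial\sigma_{{\bf R}_i}(T)$ and that $T$ admits a GKRD. By (b) the set $\sigma_{{\bf R}_i}(T)$ is closed, so $0\in\sigma_{{\bf R}_i}(T)$; by (a), $0\notin\inter\,\sigma_{{\bf R}_i}(T)$. Now I apply the appropriate equivalence according to the value of $i$: for $i=4$ the equivalence {\rm (v)}$\Longleftrightarrow${\rm (vi)} of Theorem \ref{t1}; for $i=5$ the equivalence {\rm (v)}$\Longleftrightarrow${\rm (vi)} of Theorem \ref{t2}; for $i=6$ the equivalence {\rm (viii)}$\Longleftrightarrow${\rm (ix)} of Theorem \ref{t3}; and for $7\le i\le 12$ the equivalence {\rm (ii)}$\Longleftrightarrow${\rm (iii)} of Theorem \ref{t4}. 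In every case the condition ``$T$ admits a GKRD and $0\notin\inter\,\sigma_{{\bf R}_i}(T)$'' is equivalent to ``$T$ admits a GKRD and $0\notin\acc\,\sigma_{{\bf R}_i}(T)$'', whence $0\notin\acc\,\sigma_{{\bf R}_i}(T)$; combined with $0\in\sigma_{{\bf R}_i}(T)$ this is exactly $0\in\iso\,\sigma_{{\bf R}_i}(T)$.

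The three ``moreover'' implications are even quicker and need nothing beyond (a). If $0\in\partial\sigma_{ap}(T)$ then $0\notin\inter\,\sigma_{ap}(T)$, so condition {\rm (ii)} of Theorem \ref{t1} holds; the equivalence {\rm (ii)}$\Longleftrightarrow${\rm (v)} there then gives $0\notin\acc\,\sigma_{\B_+}(T)$, which is \eqref{pos1}. Repeating the argument with Theorem \ref{t2} in place of Theorem \ref{t1} (and $\sigma_{su}$, $\B_-$ in place of $\sigma_{ap}$, $\B_+$) gives \eqref{pos2}, and using the equivalence {\rm (ii)}$\Longleftrightarrow${\rm (viii)} of Theorem \ref{t3} (with $\sigma$, $\B$) gives \eqref{pos3}.

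I do not expect a genuine obstacle: the statement merely repackages the ``interior $=$ accumulation'' dichotomy that was the content of Theorems \ref{t1}--\ref{t4}. The only points demanding a little care are bookkeeping ones — recording that the spectra in question are closed so that their boundary points belong to them, and matching each index $i$ to the theorem in whose list of equivalent conditions the required comparison between $\inter$ and $\acc$ appears.
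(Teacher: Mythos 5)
Your proof is correct and follows essentially the same route as the paper: both deduce from $0\in\partial\sigma_{{\bf R}_i}(T)$ that $0\in\sigma_{{\bf R}_i}(T)$ and $0\notin\inter\,\sigma_{{\bf R}_i}(T)$, then invoke the ``interior vs.\ accumulation'' equivalences of Theorems \ref{t1}, \ref{t2}, \ref{t3} and \ref{t4} (and, for the ``moreover'' part, the equivalences (ii)$\Longleftrightarrow$(v) resp.\ (ii)$\Longleftrightarrow$(viii)). The only difference is that you make explicit the closedness of $\sigma_{{\bf R}_i}(T)$, which the paper uses tacitly.
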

\begin{proof} Let
 $0\in\partial\sigma_{\B_+}(T)$ and let $T$ admit a GKRD. Then $0\in \sigma_{\B_+}(T)$ and  $0\notin \inter\, \sigma_{\B_+}(T)$. From the equivalence {\rm (v)}$\Longleftrightarrow${\rm (vi)} in Theorem \ref{t1}, it follows that $0\notin\acc\, \sigma_{\B_+}(T)$. Since $0\in \sigma_{\B_+}(T)$, it means that $0\in{\iso}\, \sigma_{\B_+}(T)$. The remaining cases ($5\le i\le 12$) can be proved in a similar way.

 Let
 $0\in\partial\sigma_{ap}(T)$ and let $T$ admit a GKRD. Then $0\in \sigma_{ap}(T)$ and  $0\notin \inter\, \sigma_{ap}(T)$. From the equivalence {\rm (ii)}$\Longleftrightarrow${\rm (v)} in Theorem \ref{t1}, it follows that $0\notin\acc\, \sigma_{\B_+}(T)$. Similarly for (\ref{pos2}) and (\ref{pos3}).
\end{proof}

\medskip

\begin{theorem}\label{partial} Let $T\in L(X)$. Then the following inclusions hold:
\begin{equation}\label{poss-0}
 \partial\sigma_{{\bf R}_i}(T)\cap\acc\, \sigma_{{\bf R}_i}(T)\subset \sigma_{gKR}(T) ,\ \ 4\le  i\le 12.
\end{equation}
Moreover,
\begin{eqnarray}
  \partial\sigma_{ap}(T)\cap {\acc}\, \sigma_{\B_+}(T)\subset\sigma_{gKR}(T);\label{ppos1} \\
   \partial\sigma_{su}(T)\cap {\acc}\, \sigma_{\B_-}(T)\subset\sigma_{gKR}(T);\label{ppos2} \\
    \partial\sigma(T)\cap {\acc}\, \sigma_{\B}(T)\subset\sigma_{gKR}(T).\label{ppos3}
\end{eqnarray}
\end{theorem}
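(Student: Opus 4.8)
The statement to prove is Theorem \ref{partial}, which asserts the inclusions \eqref{poss-0}, \eqref{ppos1}, \eqref{ppos2}, \eqref{ppos3}. The natural strategy is to argue by contradiction: suppose $\lambda_0$ lies in the left-hand set but $\lambda_0 \notin \sigma_{gKR}(T)$, i.e. $T-\lambda_0$ admits a GKRD. Then the hypotheses of Theorem \ref{partial0} are met, so I would invoke the implications \eqref{pos0}, \eqref{pos1}, \eqref{pos2}, \eqref{pos3} already established there. After a translation by $\lambda_0$ (replacing $T$ by $T-\lambda_0$ so that the distinguished point becomes $0$), these give respectively: $0 \in \iso\,\sigma_{{\bf R}_i}(T-\lambda_0)$, or $0 \notin \acc\,\sigma_{\B_+}(T-\lambda_0)$, etc.

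For the first family \eqref{poss-0}: from $\lambda_0 \in \partial\sigma_{{\bf R}_i}(T)$ and $T-\lambda_0$ admitting a GKRD, \eqref{pos0} yields $\lambda_0 \in \iso\,\sigma_{{\bf R}_i}(T)$. But $\iso\,\sigma_{{\bf R}_i}(T) = \sigma_{{\bf R}_i}(T)\setminus\acc\,\sigma_{{\bf R}_i}(T)$, so $\lambda_0 \notin \acc\,\sigma_{{\bf R}_i}(T)$, contradicting the assumption $\lambda_0 \in \partial\sigma_{{\bf R}_i}(T)\cap\acc\,\sigma_{{\bf R}_i}(T)$. Hence $\lambda_0 \in \sigma_{gKR}(T)$, which is exactly \eqref{poss-0}. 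For \eqref{ppos1}: if $\lambda_0 \in \partial\sigma_{ap}(T)\cap\acc\,\sigma_{\B_+}(T)$ and $T-\lambda_0$ admits a GKRD, then \eqref{pos1} gives $\lambda_0 \notin \acc\,\sigma_{\B_+}(T)$, contradicting $\lambda_0 \in \acc\,\sigma_{\B_+}(T)$; so again $\lambda_0 \in \sigma_{gKR}(T)$. The inclusions \eqref{ppos2} and \eqref{ppos3} follow in exactly the same way from \eqref{pos2} and \eqref{pos3}, using $\sigma_{su}$, $\sigma_{\B_-}$ and $\sigma$, $\sigma_{\B}$ in place of $\sigma_{ap}$, $\sigma_{\B_+}$.

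**Main obstacle.** Honestly, once Theorem \ref{partial0} is in hand there is essentially no obstacle — this is a clean contradiction argument and the real work was done in \eqref{pos0}--\eqref{pos3}. The one point requiring a small amount of care is the translation bookkeeping: Theorem \ref{partial0} is stated with the distinguished point equal to $0$, so I must first note that $\sigma_{gKR}(T-\lambda_0) = \sigma_{gKR}(T) - \lambda_0$ and similarly for $\sigma_{{\bf R}_i}$, $\sigma_{ap}$, $\sigma_{su}$, $\sigma$ and their boundaries and accumulation sets, so that "$\lambda_0 \in \partial\sigma_{{\bf R}_i}(T)$" is equivalent to "$0 \in \partial\sigma_{{\bf R}_i}(T-\lambda_0)$", and "$\lambda_0 \notin \sigma_{gKR}(T)$" means precisely that $T-\lambda_0$ admits a GKRD. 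With that remark made once at the start, the four inclusions are immediate.

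\begin{proof}
Fix $\lambda_0\in\CC$ and suppose, for contradiction, that $\lambda_0$ belongs to the left-hand side of one of \eqref{poss-0}, \eqref{ppos1}, \eqref{ppos2}, \eqref{ppos3} but $\lambda_0\notin\sigma_{gKR}(T)$, i.e. $T-\lambda_0$ admits a GKRD. Replacing $T$ by $T-\lambda_0$ we may assume $\lambda_0=0$, since $\sigma_{gKR}$, $\sigma_{{\bf R}_i}$, $\sigma_{ap}$, $\sigma_{su}$, $\sigma$ and the operations $\partial$ and $\acc$ all commute with translation.

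First consider \eqref{poss-0}, so $0\in\partial\sigma_{{\bf R}_i}(T)\cap\acc\,\sigma_{{\bf R}_i}(T)$ for some $4\le i\le 12$. Since $0\in\partial\sigma_{{\bf R}_i}(T)$ and $T$ admits a GKRD, implication \eqref{pos0} of Theorem \ref{partial0} gives $0\in\iso\,\sigma_{{\bf R}_i}(T)=\sigma_{{\bf R}_i}(T)\setminus\acc\,\sigma_{{\bf R}_i}(T)$, so $0\notin\acc\,\sigma_{{\bf R}_i}(T)$, contradicting our assumption. Hence $0\in\sigma_{gKR}(T)$, which proves \eqref{poss-0}.

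Next consider \eqref{ppos1}, so $0\in\partial\sigma_{ap}(T)\cap\acc\,\sigma_{\B_+}(T)$. Since $0\in\partial\sigma_{ap}(T)$ and $T$ admits a GKRD, implication \eqref{pos1} of Theorem \ref{partial0} gives $0\notin\acc\,\sigma_{\B_+}(T)$, contradicting our assumption. Hence $0\in\sigma_{gKR}(T)$, which proves \eqref{ppos1}. The inclusions \eqref{ppos2} and \eqref{ppos3} follow in exactly the same way, using implications \eqref{pos2} and \eqref{pos3} of Theorem \ref{partial0} in place of \eqref{pos1}.
\end{proof}
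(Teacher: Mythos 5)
Your proof is correct and follows exactly the route the paper takes: Theorem \ref{partial} is deduced from Theorem \ref{partial0} by a contrapositive/contradiction argument (the paper simply writes ``Follows from Theorem \ref{partial0}''). Your additional remark about the translation bookkeeping is a fair point of care but adds nothing beyond what the paper implicitly assumes.
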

\begin{proof}
Follows from Theorem \ref{partial0}.
\end{proof}
We remark that (\ref{poss-0}) is an improvement of \cite[Theorem 4.5 (4.3)]{ZC} for the case that  $  i \in\{4,\dots, 12\}$.

\medskip

It follows an example of a operator which does not admit a GKRD.
\begin{example} {\rm Let $X$ be an infinite dimensional Banach  space and let
$\tilde X=\oplus_{n=1}^{\infty} X_i$  where $X_i=X$, $i\in\NN$. Let $A=\oplus_{n=1}^{\infty}(\frac 1n I)$ where $I$ is identity on $X$. Then $A\in L(\tilde X)$ and    $\sigma (A)=\{0\}\cup\{\frac 1n:n\in\NN\}$. The operator $A-(1/n)$ has finite ascent and descent and hence $1/n$ is a pole of the resolvent of $A$, but $\alpha (A-(1/n))=\beta (A-(1/n))=+\infty$ and so,  $1/n$ is a pole of the infinite algebraic multiplicity and $1/n$ belongs to the Fredholm spectrum of $A$. Consequently, $A$ is meromorphic  and Fredholm spectrum of $A$ is equal to the spectrum of $A$. Therefore, $\sigma_{\B}(A)=\sigma_{\W}(A)=
\sigma_{\Phi}(A)=\sigma(A)$. From \eqref{ppos3} it follows that  $\{0\}=\partial\sigma(A)\cap {\acc}\, \sigma_{\B}(A)\subset\sigma_{gKR}(A)$. Since $\sigma_{gKR}(A)\subset\sigma_{gK}(A)\subset\sigma_{gD}(A)=\acc\, \sigma (A)=\{0\}$, we get $\sigma_{gKR}(A)=\{0\}$  and hence,  $A$ does not admit a GKRD.
Also, we remark that $\sigma_{\Phi_+}(A)=\sigma_{\Phi_-}(A)=\sigma_{\W_+}(A)=\sigma_{\W_-}(A)=\sigma_{\B_+}(A)=\sigma_{\B_-}(A)=\sigma_{ap}(A)=\sigma_{su}(A)=\sigma(A)$ and
 $0\notin{\rm int}\, \sigma(T)$. This means that for $T\in L(X)$ the condition that $0\notin {\rm int} \sigma_{{\bf R}_i}(T)$, $i\in\{1,\dots,12\}$ is not sufficient for $T$ to admit a GKRD. Therefore,   the condition that the operator admits a GKRD in the statements  (ii) and (ix) of Theorem \ref{t3}, as well as  in the statements (ii) and (vi) of Theorems \ref{t1} and \ref{t2}, and also, in the statements (iii) of Theorem \ref{t4}, can not be ommited.

}
\end{example}


\smallskip

The {\it connected hull}  of a compact subset $K$ of the complex
plane $\CC$, denoted by $\eta K$, is the complement of the unbounded
component of $\CC\setminus K$ \cite[Definition
7.10.1]{H}. Given a compact subset $K$ of the plane, a hole of $K$
is a bounded component of $\CC\setminus K$, and so a hole of $K$ is
a component of $\eta K\setminus K$.

We recall  that, for compact subsets $H,K\subset\CC$, the following implication holds (\cite[Theorem
7.10.3]{H}):
\begin{equation}\label{spec.1}
\partial H\subset K\subset
H\Longrightarrow\partial H\subset\partial K\subset K\subset
H\subset\eta K=\eta H\ .
\end{equation}
Evidently, if $K\subseteq\CC$ is finite, then $\eta K=K$.
Therefore, for compact subsets $H,K\subseteq\CC$, if
$\eta K=\eta H$, then $H\ {\rm is\
finite}$ if and only if $ K\ {\rm is\ finite}$,
and in that case $H=K$.

\medskip

\begin{theorem}\label{rub} Let $T\in L(X)$. Then

\begin{center}
\begin{tabular}{rcccl}
& & $\partial \sigma_{\bf gDR\mathcal{M}}(T) \; \; \subset \; \; \partial \sigma_{\bf gDR\mathcal{W}_+}(T) \; \; \subset \; \; \partial \sigma_{\bf gDR\Phi_+}(T)$& & \\
 & \rotatebox{30}{$\subset$} & &\rotatebox{-30}{$\subset$}& \\
$\partial \sigma_{gDR}(T)$
& $\subset$ &
$\partial \sigma_{\bf gDR\mathcal{W}}(T) \; \; \; \; \; \; \; \subset \; \; \; \; \; \; \; \partial \sigma_{\bf gDR\Phi}(T)$& $\subset$ &$\partial \sigma_{gKR}(T)$\\
 &\rotatebox{-30}{$\subset$} & & \rotatebox{30}{$\subset$} &\\
& & $\partial \sigma_{\bf gDR\mathcal{Q}}(T) \; \; \subset \; \; \partial \sigma_{\bf gDR\mathcal{W}_-}(T) \; \; \subset \; \; \partial \sigma_{\bf gDR\Phi_-}(T)$ & &\\

\end{tabular}
\end{center}

\begin{eqnarray*}
&\partial\sigma_{\bf gDR \Phi}(T)\subset \partial\sigma_{\bf gD R\Phi_+}(T),\ \  \partial\sigma_{\bf gD R\Phi}(T)\subset \partial\sigma_{\bf gD R\Phi_-}(T), \\
  &\partial\sigma_{\bf gDR \W}(T)\subset \partial\sigma_{\bf gDR \W_+}(T),\ \  \partial\sigma_{\bf gD R\W}(T)\subset \partial\sigma_{\bf gD R\W_-}(T),
  \end{eqnarray*}

and

\begin{eqnarray}%
\eta\sigma_{gKR}(T)&=&\eta\sigma_{\bf gDR\Phi_+}(T)=\eta\sigma_{\bf gDR\W_+}(T)=\eta\sigma_{\bf gDR\M}(T)\nonumber\\&=&\eta\sigma_{\bf gDR\Phi_-}(T)
=\eta\sigma_{\bf gDR\W_-}(T)=\eta\sigma_{\bf gDR\Q}(T)\label{rub2}
\\&=&\eta\sigma_{\bf gDR\Phi}(T)=\eta\sigma_{\bf gDR\W}(T)=\eta\sigma_{gDR}(T)
.\nonumber
\end{eqnarray}
\end{theorem}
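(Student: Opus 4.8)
The plan is to establish the chain of boundary inclusions first and then derive the equality of connected hulls \eqref{rub2} as a formal consequence via Harte's implication \eqref{spec.1}. The backbone observation is that all twelve spectra $\sigma_{\bf gDRR}_i(T)$ together with $\sigma_{gKR}(T)$ satisfy the nesting relations recorded in \eqref{inkluzija}, \eqref{inkluzija-} and \eqref{inkluzija--}, and that whenever $\lambda_0$ lies on the boundary of the \emph{larger} spectrum in such a containment, Corollary~\ref{t4-GKt} (or the equivalences {\rm (i)}$\Longleftrightarrow${\rm (ii)} in Theorems~\ref{t3},~\ref{t1},~\ref{t2} together with {\rm (i)}$\Longleftrightarrow${\rm (iii)} in Theorem~\ref{t4}) forces $\lambda_0$ into the smaller spectrum as well. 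This is the mechanism that lets a boundary point propagate \emph{downward} through the hierarchy.

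First I would prove a general lemma: if $A, B \subseteq \CC$ are the spectra attached to two of our classes with $B \subseteq A$ (say $A = \sigma_{\bf gDRR}_j(T)$, $B = \sigma_{\bf gDRR}_i(T)$ or $B = \sigma_{gKR}(T)$) and if moreover every point of $A \setminus B$ is an interior point of $A$ — equivalently $\partial A \cap (A\setminus B) = \emptyset$ — then $\partial A \subseteq \partial B$. The hypothesis $\partial A \cap (A \setminus B) = \emptyset$ is exactly what Corollary~\ref{t4-GKt} gives: a point of $\partial\sigma_{\bf gDRR}_j(T)$ is on the boundary of a $\sigma_{\bf R}_j$-type set, hence $T-\lambda_0$ admits a GKRD, hence $T - \lambda_0 \in {\bf gDRR}_i(X)$, i.e. $\lambda_0 \notin B$ is impossible unless $\lambda_0 \in B$; one then checks $\lambda_0 \in \partial B$ using $B \subseteq A$ and $\lambda_0 \in \partial A$ (any neighbourhood of $\lambda_0$ meets $\CC \setminus A \subseteq \CC \setminus B$ and meets $B$ since $\lambda_0 \in B = \overline{B} \setminus \inter B$ needs checking — here one uses that $\lambda_0$ is not interior to $A \supseteq B$). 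Applying this lemma along each edge of the displayed diagram (bounded-below $\to$ Weyl$_+$ $\to$ Fredholm$_+$, the surjective row, the Weyl/Fredholm middle row, and the vertical links from $\sigma_{gDR}$ down to each row and from each Fredholm$_\pm$ spectrum into $\sigma_{gKR}$) yields all the inclusions in the table and the four supplementary inclusions $\partial\sigma_{\bf gDR\Phi}(T)\subset\partial\sigma_{\bf gDR\Phi_\pm}(T)$, $\partial\sigma_{\bf gDR\W}(T)\subset\partial\sigma_{\bf gDR\W_\pm}(T)$.

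For \eqref{rub2} I would argue as follows. From the table we have, for every class $\bf R$ in the diagram, $\partial\sigma_{gDR}(T) \subseteq \partial\sigma_{\bf gDRR}(T)$ and $\partial\sigma_{\bf gDRR}(T) \subseteq \partial\sigma_{gKR}(T)$, while from \eqref{inkluzija-} and \eqref{inkluzija--} we have $\sigma_{gKR}(T) \subseteq \sigma_{\bf gDRR}(T) \subseteq \sigma_{gDR}(T)$. Combining: $\partial\sigma_{gDR}(T) \subseteq \partial\sigma_{\bf gDRR}(T) \subseteq \partial\sigma_{gKR}(T) \subseteq \sigma_{gKR}(T) \subseteq \sigma_{\bf gDRR}(T) \subseteq \sigma_{gDR}(T)$, so in particular $\partial\sigma_{gDR}(T) \subseteq \sigma_{gKR}(T) \subseteq \sigma_{gDR}(T)$, and Harte's implication \eqref{spec.1} applied with $H = \sigma_{gDR}(T)$, $K = \sigma_{gKR}(T)$ gives $\eta\sigma_{gKR}(T) = \eta\sigma_{gDR}(T)$. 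Running \eqref{spec.1} once more with $K = \sigma_{\bf gDRR}(T)$ (again $\partial\sigma_{gDR}(T)\subseteq \sigma_{\bf gDRR}(T)\subseteq \sigma_{gDR}(T)$) gives $\eta\sigma_{\bf gDRR}(T) = \eta\sigma_{gDR}(T)$ for every intermediate class, which is exactly \eqref{rub2}. All of these spectra are compact by Proposition~\ref{cor1}{\rm (ii)} and the compactness of $\sigma_{gKR}(T)$, so \eqref{spec.1} applies.

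The main obstacle I anticipate is purely bookkeeping rather than conceptual: one must verify, edge by edge, that the hypothesis ``$\partial(\text{larger}) \cap (\text{larger}\setminus\text{smaller}) = \emptyset$'' genuinely follows from Corollary~\ref{t4-GKt}, being careful that this corollary is stated for $0\in\partial\sigma_{\bf R}_i(T)$ with $\bf R_i$ one of the twelve base classes, not literally for $\partial\sigma_{\bf gDRR}_i(T)$ — so one must pass through the observation that membership in $\partial\sigma_{\bf gDRR}_j(T)$ still forces $\lambda_0$ onto the boundary of \emph{some} $\sigma_{\bf R}$-type set (via \eqref{glava}–\eqref{glava-} and \eqref{n0}–\eqref{n2}, since $\sigma_{\bf gDRR}_j(T) = \sigma_{gKR}(T) \cup \inter\sigma_{\bf R}_j(T)$ and a boundary point of a set of the form $A \cup \inter B$ that is not interior has the GKRD property). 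The degenerate cases — where some spectrum is empty or finite, so that connected hulls collapse — should be handled at the outset by noting that if $\sigma_{gDR}(T) = \emptyset$ then all the intermediate spectra are empty by the inclusions, and \eqref{rub2} holds trivially; the finite case is covered by the remark after \eqref{spec.1}.
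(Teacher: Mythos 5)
Your overall architecture coincides with the paper's: both proofs reduce everything to the single family of inclusions $\partial\sigma_{{\bf g DR R}_i}(T)\subset\sigma_{gKR}(T)$ and then let Harte's implication \eqref{spec.1}, together with the nestings \eqref{inkluzija-} and \eqref{inkluzija--}, generate the whole boundary diagram and the hull equalities \eqref{rub2}; your treatment of that second half is correct. The problem is the middle step, where you justify the key inclusion. The chain ``a point of $\partial\sigma_{{\bf g DR R}_j}(T)$ is on the boundary of a $\sigma_{{\bf R}_j}$-type set, hence $T-\lambda_0$ admits a GKRD, hence $T-\lambda_0\in{\bf g DR R}_i(X)$'' is false as written: lying on $\partial\sigma_{{\bf R}_j}(T)$ does not force a GKRD --- the paper's own example $A=\oplus_{n=1}^{\infty}(\tfrac1n I)$ has $0\in\partial\sigma_{{\bf R}_i}(A)$ with $\inter\sigma_{{\bf R}_i}(A)=\emptyset$ for every $i$, yet $\sigma_{gKR}(A)=\{0\}$. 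Moreover the conclusion you need is the opposite one: you must show $\lambda_0\in\sigma_{gKR}(T)$, i.e.\ that $T-\lambda_0$ does \emph{not} admit a GKRD. Corollary \ref{t4-GKt} can be used here, but only in contrapositive form: once you know $\lambda_0\in\partial\sigma_{{\bf R}_j}(T)$ and $\lambda_0\in\sigma_{{\bf g DR R}_j}(T)$, the corollary forces $T-\lambda_0$ not to admit a GKRD, whence $\lambda_0\in\sigma_{gKR}(T)\subset\sigma_{{\bf g DR R}_i}(T)$ for every $i$, which is what feeds your general lemma and \eqref{spec.1}.

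In fact the detour through Corollary \ref{t4-GKt} is unnecessary, and the correct mechanism is already contained in your own verification of the lemma's hypothesis: by \eqref{n0}--\eqref{n2} and \eqref{glava-} one has $\sigma_{{\bf g DR R}_j}(T)=\sigma_{gKR}(T)\cup\inter\sigma_{{\bf R}_j}(T)$, and $\inter\sigma_{{\bf R}_j}(T)$ is an \emph{open} subset of $\sigma_{{\bf g DR R}_j}(T)$, so no point of $\partial\sigma_{{\bf g DR R}_j}(T)$ can belong to it; since $\sigma_{{\bf g DR R}_j}(T)$ is closed, every such boundary point therefore lies in $\sigma_{gKR}(T)$. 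This is exactly the paper's argument. If you replace the faulty sentence by this observation (or by the contrapositive reading of Corollary \ref{t4-GKt}), your proof closes and is essentially identical to the paper's.
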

\begin{proof} According to (\ref{spec.1}) and the inclusions (\ref{inkluzija-}) and (\ref{inkluzija--})
 it is sufficient to prove the inclusions
\begin{equation*}
 \partial\sigma_{{\bf gDRR_i}}(T)\subset \sigma_{gKR}(T),\ i\in\{1,2,3,7,8,9,10,11,12\}.
\end{equation*}

%
%
%
%
%
%
%
%
%
%
%
%
%
Suppose that $\lambda_0\in \partial\sigma_{{\bf gDRR_i}}(T)$. Since $\sigma_{{\bf gDRR_i}}(T)$ is closed, it follows that
  \begin{equation}\label{mlad}
    \lambda_0\in \sigma_{{\bf gDRR_i}}(T)=\sigma_{gKR}(T)\cup{\rm int}\, \sigma_{{\bf R_i}}(T).
  \end{equation}

 We shall prove that
 \begin{equation}\label{mlad1}
   \lambda_0\notin {\rm int}\, \sigma_{\bf R_i}(T).
 \end{equation}
  Suppose  that $\lambda_0\in {\rm int}\, \sigma_{\bf R_i}(T)$.
   Then there exists an $\epsilon>0$ such that $D(\lambda_0,\epsilon)\subset \sigma_{\bf R_i}(T)$. This implies that $D(\lambda_0,\epsilon)\subset {\rm int}\,\sigma_{\bf R_i}(T)$ and hence, $D(\lambda_0,\epsilon)\subset \sigma_{{\bf gDRR_i}}(T)$, which contradicts to the fact that $\lambda_0\in \partial\sigma_{{\bf gDRR_i}}(T)$.
Now from  (\ref{mlad}) and (\ref{mlad1}), it follows that
$\lambda_0\in\sigma_{gKR}(T)$.
\end{proof}

From  (\ref{rub2}) it follows that $\sigma_{gKR}(T)$ is finite if and only if $\sigma_{{\bf g DR R}_i}(T)$ is finite for arbitrary
$i\in\{1,\dots ,12\}$,
and in that case $\sigma_{gKR}(T)=\sigma_{{\bf g DR R}_i}(T)$ for all $i\in\{1,\dots ,12\}$. Also, from (\ref{rub2}) it follows that $\sigma_{gKR}(T)=\emptyset$ if and only if
 $\sigma_{{\bf g DR R}_i}(T)=\emptyset$ where $i$ is one of $1,\dots, 12$. Moreover, the following theorem holds:

\begin{theorem}\label{Lak}
Let $T\in L(X)$.  The following statements are equivalent:

\snoi {\rm (i)} $\sigma_{gKR}(T)=\emptyset$;

\snoi {\rm (ii)} $\sigma_{gDR}(T)=\emptyset$;

\snoi {\rm (iii)} $T$ is polynomially Riesz;

\snoi  {\rm (iv)} $\sigma_{\B}(T)$ is a finite set;

\snoi  {\rm (v)} $\theta (T)$ is a finite set,  where $\theta$ is one of $\sigma_{eK}$, $\sigma_{\Phi_+}$, $\sigma_{\Phi_-}$, $\sigma_{\Phi}$, $\sigma_{\W_+}$,  $\sigma_{\W_-}$,  $\sigma_{\W}$, $\sigma_{\B_+}$, $\sigma_{\B_-}$.
\end{theorem}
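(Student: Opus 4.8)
The plan is to establish the cycle (iv)$\Leftrightarrow$(iii)$\Rightarrow$(ii)$\Rightarrow$(iv), to recall that (i)$\Leftrightarrow$(ii) was already observed immediately before the theorem, and then to reduce every assertion in (v) to (iii). The equivalence (i)$\Leftrightarrow$(ii) is immediate from \eqref{rub2}, since $\eta\sigma_{gKR}(T)=\eta\sigma_{gDR}(T)$ and a nonempty compact set $K$ satisfies $\eta K\supseteq K\neq\emptyset$. For (iii)$\Leftrightarrow$(iv) I would invoke the spectral mapping theorem for the Browder spectrum, $\sigma_{\mathcal{B}}(p(T))=p(\sigma_{\mathcal{B}}(T))$ for every polynomial $p$: if $\sigma_{\mathcal{B}}(T)=\{\lambda_1,\dots,\lambda_k\}$ is finite, then $p(z)=\prod_{j}(z-\lambda_j)$ gives $\sigma_{\mathcal{B}}(p(T))=\{0\}$, so $p(T)-\nu$ is Browder, hence Fredholm, for all $\nu\neq 0$, i.e.\ $p(T)$ is Riesz and $T\in\Poly^{-1}R(X)$; conversely, if $p(T)$ is Riesz for some $p\neq 0$, then $\sigma_{\mathcal{B}}(p(T))\subseteq\{0\}$ by \cite[Theorem 3.111]{Ai}, whence $\sigma_{\mathcal{B}}(T)\subseteq p^{-1}(0)$ is finite. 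Finally (ii)$\Rightarrow$(iv) follows from \eqref{poziv}: $\sigma_{gDR}(T)=\sigma_{gKR}(T)\cup\acc\,\sigma_{\mathcal{B}}(T)$, so $\sigma_{gDR}(T)=\emptyset$ forces $\acc\,\sigma_{\mathcal{B}}(T)=\emptyset$, and a compact set without accumulation points is finite.

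The substantive step is (iii)$\Rightarrow$(ii). Let $T$ be polynomially Riesz, $\sigma_{\mathcal{B}}(T)=\{\lambda_1,\dots,\lambda_k\}$. Since $\acc\,\sigma(T)\subseteq\sigma_{\mathcal{B}}(T)$ (as noted after Proposition \ref{decomposition}), every point of $\sigma(T)\setminus\{\lambda_1,\dots,\lambda_k\}$ is isolated in $\sigma(T)$; assigning each such point to a nearest $\lambda_j$ produces a partition $\sigma(T)=\Sigma_1\cup\dots\cup\Sigma_k$ into pairwise disjoint sets that are clopen in $\sigma(T)$, with $\lambda_j\in\Sigma_j$ and $\lambda_l\notin\Sigma_j$ for $l\neq j$ (a sequence of points assigned to $j$ cannot converge to a point outside $\Sigma_j$). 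By the Riesz decomposition (holomorphic functional calculus) one gets $X=X_1\oplus\dots\oplus X_k$ with each $X_j$ closed and $T$-invariant and $\sigma(T_{X_j})=\Sigma_j$; Lemma \ref{lema}{\rm (i)} applied with ${\bf R}_6=\mathcal{B}(X)$ then gives $\sigma_{\mathcal{B}}(T_{X_j})\subseteq\sigma_{\mathcal{B}}(T)\cap\Sigma_j=\{\lambda_j\}$, so $T_{X_j}-\lambda_j$ is Riesz. Fixing $j$ and setting $Y_j=\bigoplus_{l\neq j}X_l$, we have $T-\lambda_j=(T_{X_j}-\lambda_j)\oplus(T_{Y_j}-\lambda_j)$, the first summand Riesz and the second Browder (again by Lemma \ref{lema}{\rm (i)}, because $\lambda_j\notin\sigma_{\mathcal{B}}(T_{X_l})$ for $l\neq j$). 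Since every Browder operator is generalized Drazin-Riesz invertible, Theorem \ref{t3} lets us write $T_{Y_j}-\lambda_j=A\oplus B$ with $A$ invertible and $B$ Riesz; hence $T-\lambda_j=A\oplus\bigl(B\oplus(T_{X_j}-\lambda_j)\bigr)$, where $B\oplus(T_{X_j}-\lambda_j)$ is Riesz by Lemma \ref{lema-Riesz}, so $T-\lambda_j$ is generalized Drazin-Riesz invertible. As $T-\lambda$ is Browder (hence generalized Drazin-Riesz invertible) for every $\lambda\notin\sigma_{\mathcal{B}}(T)$, we conclude $\sigma_{gDR}(T)=\emptyset$, which is (ii).

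For (iv)$\Leftrightarrow$(v), note that $\sigma_{eK}(T)\subseteq\theta(T)\subseteq\sigma_{\mathcal{B}}(T)$ for every $\theta$ in the list (the first inclusion because semi-Fredholm operators are essentially Kato, \cite[Theorem 16.21]{Mu}, the second because Browder operators belong to each of the classes involved). Thus (iv) at once gives that each $\theta(T)$ is finite, and for the converse it is enough to prove that $\sigma_{eK}(T)$ finite implies (iii), since finiteness of any $\theta(T)$ forces $\sigma_{eK}(T)$ finite. When $\sigma_{eK}(T)$ is finite, $\CC\setminus\sigma_{eK}(T)$ is connected; on it $T-\lambda$ is essentially Kato, its generalized index is locally constant, and it equals $0$ near $\infty$ (where $T-\lambda$ is invertible), hence is $0$ throughout, which forces $T-\lambda$ to be Fredholm of index $0$ for all $\lambda\notin\sigma_{eK}(T)$. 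So $\sigma_{\Phi}(T)=\sigma_{eK}(T)=\{\mu_1,\dots,\mu_r\}$ is finite; taking $p(z)=\prod_i(z-\mu_i)$, for $\nu\neq 0$ every root of $p(z)-\nu$ lies off $\sigma_{\Phi}(T)$, so $p(T)-\nu$ is a product of Fredholm operators and is Fredholm, so $p(T)$ is Riesz, i.e.\ (iii) holds.

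The hard part will be the essentially Kato case of (v): showing that $\sigma_{eK}(T)$ finite forces $\sigma_{\Phi}(T)=\sigma_{eK}(T)$. This relies on the punctured-neighbourhood and constancy-of-index theory for essentially Kato operators, to the effect that an essentially Kato operator which can be connected to an invertible one through essentially Kato operators must itself be Fredholm of index $0$. Everything else is routine: (i)$\Leftrightarrow$(ii) and (ii)$\Rightarrow$(iv) come straight from \eqref{rub2} and \eqref{poziv}, (iii)$\Leftrightarrow$(iv) is spectral mapping for $\sigma_{\mathcal{B}}$, the decomposition in (iii)$\Rightarrow$(ii) is the classical Riesz decomposition along the clopen pieces of $\sigma(T)$, and in (v) all the remaining spectra are trapped between $\sigma_{eK}(T)$ and $\sigma_{\mathcal{B}}(T)$.
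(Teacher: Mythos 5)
Your architecture is sound and your uses of \eqref{rub2} for (i)$\Leftrightarrow$(ii) and of \eqref{poziv} for (ii)$\Rightarrow$(iv) coincide exactly with the paper's. Where you genuinely diverge is that the paper simply cites \cite{ZDHB} both for the equivalences (iii)$\Leftrightarrow$(iv)$\Leftrightarrow$(v) and for the spectral decomposition $X=X_1\oplus\dots\oplus X_n$ with $T_i-\lambda_i$ Riesz that drives (iii)$\Rightarrow$(ii), whereas you rebuild these from scratch: the spectral mapping theorem for $\sigma_{\B}$ for (iii)$\Leftrightarrow$(iv), and the Riesz functional calculus along a clopen partition of $\sigma(T)$ (obtained by attaching each isolated Riesz point to its nearest $\lambda_j$) for (iii)$\Rightarrow$(ii). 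That construction is correct --- the clopenness argument works because the assigned points can only accumulate at the $\lambda_j$ they are nearest to, Lemma \ref{lema}(i) does give $\sigma_{\B}(T_{X_j})\subset\sigma_{\B}(T)\cap\Sigma_j=\{\lambda_j\}$, and in fact your $T_{Y_j}-\lambda_j$ is even invertible (since $\lambda_j\notin\Sigma_l=\sigma(T_{X_l})$ for $l\ne j$), so you land directly in condition (i) of Theorem \ref{t3} without the extra decomposition step. What your route buys is self-containment; what the paper's route buys is brevity, at the cost of importing \cite[Theorems 2.2 and 2.13]{ZDHB} wholesale.

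The one place your argument is not actually a proof is the case $\theta=\sigma_{eK}$ of (v), which you have made load-bearing by reducing all of (v) to ``$\sigma_{eK}(T)$ finite $\Rightarrow$ (iii).'' The assertion that a ``generalized index'' is locally constant on $\rho_{eK}(T)$ and forces $T-\lambda$ to be Fredholm of index $0$ there is not a standard locally-constant-index argument: an essentially Kato operator need not be semi-Fredholm at all (e.g.\ a direct sum of a bounded below operator with infinite defect and a surjective operator with infinite kernel is Kato but neither upper nor lower semi-Fredholm), so there is no index to speak of a priori. What actually closes this gap is the stability theorem for essentially Kato (essentially semi-regular) operators --- for small $\lambda\neq 0$ one has $\alpha(T-\lambda)\le\alpha(T)\le\alpha(T-\lambda)+\dim N$ with $N$ finite dimensional, and dually for $\beta$ --- which makes $\rho_{\Phi_+}(T)$ and $\rho_{\Phi_-}(T)$ relatively clopen in the connected set $\rho_{eK}(T)$ and hence equal to it; only then does the usual index-constancy argument apply and give $\sigma_{\W}(T)\subset\sigma_{eK}(T)$. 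This is a true theorem (see \cite{Mu}), so your step is fillable, but as written it is an appeal to a result you neither prove nor cite precisely, and you have flagged it yourself as ``the hard part.'' For the remaining eight choices of $\theta$ your reduction is harmless, since each of them contains $\sigma_{\Phi_+}(T)$ or $\sigma_{\Phi_-}(T)$ and the ordinary index argument on the complement of a finite set suffices; it is only the $\sigma_{eK}$ case that genuinely needs the stability machinery.
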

\begin{proof} The equivalence {\rm (i)}$\Longleftrightarrow${\rm (ii)}  follows from (\ref{rub2}).

The equivalences
{\rm (iii)}$\Longleftrightarrow${\rm (iv)}$\Longleftrightarrow${\rm (v)}  have been proved in \cite{ZDHB}.

{\rm (ii)} $\Longrightarrow$ {\rm (iv)}:  Suppose that $\sigma_{gDR}(T)=\emptyset$. From \eqref{poziv} it follows that $\acc\, \sigma_{\B}(T)=\emptyset$ and so, $\sigma_{\B}(T)$ is a finite set.

{\rm (iii)} $\Longrightarrow$ {\rm (ii)}: Let $T$ be polynomially Riesz and $\pi_T^{-1}(0)=\{\lambda_1,\dots,\lambda_n\}$. According to \cite[Theorem 2.2]{ZDHB} we have that $\sigma_{\B}(T)=\pi_T^{-1}(0)$ where $\pi_T$ is the minimal polynomial of $T$. It implies that  $T-\lambda$ is Browder and hence, generalized Drazin-Riesz invertible     for every $\lambda\notin \pi_T^{-1}(0)$.
\ According to \cite[Theorem 2.13]{ZDHB}, $X$ is  decomposed into the
direct sum $X=X_1\oplus\dots\oplus X_n$ where $X_i$ is closed
$T$-invariant subspace of $X$,  $T=T_1\oplus\dots\oplus T_n$
where $T_i$ is the reduction of $T$ on $X_i$ and $T_i-\lambda_i$
is Riesz, $i=1,\dots, n$. Since $T_i-\lambda_i$ is Riesz, it follows that $\sigma_{\B}(T_i-\lambda_i)\subset \{0\}$ and hence,  $\sigma_{\B}(T_i)\subset \{\lambda_i\}$, $i=1,\dots,n$. It implies that $T_i-\lambda_j$ is Browder for $i\ne j$, $i,j\in\{1,\dots,n\}$.

The following  argument shows that $ X_2\oplus\dots\oplus X_n$ is closed. Consider Banach space $X_1\times X_2\times\dots\times X_n$ provided with the canonical norm $\|(x_1,\dots, x_n)\|=\sum_{i=1}^n\|x_i\|$, $x_i\in X_i$, $i=1,\dots,n$. Then the map  $f:X_1\times\dots\times X_n\to X_1\oplus\dots\oplus X_n=X$    defined by $f((x_1,\dots,x_n)) =x_1+\dots +x_n$, $x_i\in X_i$, $i=1,\dots,n$, is a
  homeomorphism 
   and since $\{0\}\times X_2\times\dots\times X_n$ is closed in  $X_1\times X_2\times\dots\times X_n$, it follows that $f(\{0\}\times X_2\times\dots\times X_n)=\{0\}\oplus X_2\oplus\dots\oplus X_n=X_2\oplus\dots\oplus X_n$ is closed.

Consider  the  decomposition
$$T-\lambda_1=(T_1-\lambda_1)\oplus(T_2-\lambda_1)\oplus\dots\oplus (T_n-\lambda_1).
$$
Since $(X_1, X_2\oplus\dots\oplus X_n)\in Red(T)$, $(T-\lambda_1)_{X_1}=T_1-\lambda_1$ is Riesz, and since  $(T-\lambda_1)_{X_2\oplus\dots\oplus X_n}=(T_2-\lambda_1)\oplus\dots\oplus (T_n-\lambda_1)$
is Browder as a direct sum of Browder operators $T_2-\lambda_1,\dots , T_n-\lambda_1$
(Lema \ref{lema} (i)),  it follows that $T-\lambda_1$ is generalized Drazin-Riesz invertible. In that way   we can  prove that $T-\lambda_i$ is generalized Drazin-Riesz invertible for every $i\in\{1,\dots,n\}$. Consequently, $T-\lambda$ is  generalized Drazin-Riesz invertible     for every $\lambda\in\CC$ and hence, $\sigma_{gDR}(T)=\emptyset$.
\end{proof}

\medskip

The inclusion $\sigma_{gKR}(T)\subset \sigma_{gK}(T)$
 can be proper as it can be seen  on the example of a Riesz operator $T$ with infinite spectrum. Namely, according to Theorem \ref{Lak} it follows that $\sigma_{gKR}(T)=\emptyset$, while $\sigma_{gK}(T)=\{0\}$. Moreover, if $T$ is polynomially Riesz with infinite spectrum, then $\sigma_{gKR}(T)=\emptyset$, while from \cite[Corollary 4.11]{ZC} we have that $\sigma_{gK}(T)=\sigma_{gD}(T)=\acc\, \sigma(T)\ne\emptyset$.

\medskip

The generalized Drazin-Riesz resolvent set of $T\in L(X)$ is defined by
$\rho_{gKR}(T)=\CC\setminus\sigma_{gKR}(T)$.

\begin{corollary} \label{componenta} Let $T\in L(X)$ and let $\rho_{gKR}(T)$ has only one component. Then
$$
\sigma_{gKR}(T)=\sigma_{gDR}(T).
$$
\end{corollary}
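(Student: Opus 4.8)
The plan is to deduce everything from the identity of connected hulls $\eta\sigma_{gKR}(T)=\eta\sigma_{gDR}(T)$ recorded in Theorem \ref{rub}, equation (\ref{rub2}), combined with the single-component hypothesis on $\rho_{gKR}(T)$. The point is that having only one component forces $\sigma_{gKR}(T)$ to have no holes, so its connected hull coincides with itself, and then the hull identity pins $\sigma_{gDR}(T)$ between $\sigma_{gKR}(T)$ and its hull.

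First I would recall that $\sigma_{gKR}(T)$ is compact, so $\rho_{gKR}(T)=\CC\setminus\sigma_{gKR}(T)$ is a nonempty unbounded open subset of $\CC$. If it has only one component, that component is necessarily the unbounded one; hence, by the very definition of the connected hull, $\eta\sigma_{gKR}(T)=\CC\setminus\rho_{gKR}(T)=\sigma_{gKR}(T)$. In other words, under the hypothesis the set $\sigma_{gKR}(T)$ has no holes.

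Next I would combine this with (\ref{rub2}) to get $\eta\sigma_{gDR}(T)=\eta\sigma_{gKR}(T)=\sigma_{gKR}(T)$. Since any compact set is contained in its connected hull, this yields $\sigma_{gDR}(T)\subset\eta\sigma_{gDR}(T)=\sigma_{gKR}(T)$. On the other hand the reverse inclusion $\sigma_{gKR}(T)\subset\sigma_{gDR}(T)$ always holds by (\ref{inkluzija--}). Therefore $\sigma_{gKR}(T)=\sigma_{gDR}(T)$, as claimed.

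There is essentially no obstacle here: the only step requiring a little care is the purely topological remark that the unique component of a nonempty open set whose complement is compact must be the unbounded one, so that $\eta K=K$ whenever $\CC\setminus K$ is connected. Everything else is an immediate application of the already established hull identity (\ref{rub2}) and the inclusion chain (\ref{inkluzija--}).
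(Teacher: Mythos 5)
Your proposal is correct and follows essentially the same route as the paper: the single-component hypothesis forces $\sigma_{gKR}(T)=\eta\sigma_{gKR}(T)$, and then the hull identity (\ref{rub2}) together with the inclusion $\sigma_{gKR}(T)\subset\sigma_{gDR}(T)\subset\eta\sigma_{gDR}(T)$ squeezes the two spectra into equality. The extra topological remark you spell out (that the unique component of the complement of a compact set must be the unbounded one) is exactly what the paper abbreviates as ``$\sigma_{gKR}(T)$ has no holes.''
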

\begin{proof} Since
$\rho_{gKR}(T)$ has only one component, it follows that
$\sigma_{gKR}(T)$ has no holes, and so  $\sigma_{gKR}(T)=\eta
\sigma_{gKR}(T)$. From (\ref{rub2}) it follows that
$\sigma_{gDR}(T)\supset \sigma_{gKR}(T)=\eta \sigma_{gKR}(T)= \eta
\sigma_{gDR}(T)\supset \sigma_{gDR}(T)$ and hence,
$\sigma_{gDR}(T)=\sigma_{gKR}(T)$.
\end{proof}

\begin{theorem}\label{nov} Let $T\in L(X)$ and $4\le i\le 12$.
If
\begin{equation}\label{bon}
 \partial\sigma_{{\bf R}_i}(T)\subset\acc\, \sigma_{{\bf R}_i}(T),
\end{equation}
then
%
\begin{eqnarray}
 \partial\sigma_{{\bf R}_i}(T)&\subset&\sigma_{gKR}(T)\subset \sigma_{gK}(T)\subset \sigma_{Kt}(T)\subset \sigma_{eK}(T)\subset \sigma_{{\bf R}_i}(T),\label{part1}\\
 \partial\sigma_{{\bf R}_i}(T)&\subset&\sigma_{gKR}(T)\subset \sigma_{gK}(T)\subset \sigma_{{\bf g DR R}_i}(T)\subset  \sigma_{{\bf R}_i}(T)\label{part2}
\end{eqnarray}
and
\begin{equation}\label{pacc1}
  \eta\sigma_{{\bf R}_i}(T)=\eta \sigma_{gKR}(T)=\eta \sigma_{gK}(T)=\eta \sigma_{Kt}(T)=\eta \sigma_{eK}(T)=\eta\sigma_{{\bf g DR R}_i}(T).
\end{equation}
If $1\le i\le 3$, then
\begin{equation}\label{pariz}
 \partial\sigma_{{\bf R}_i}(T)\subset{\acc}\, \sigma_{{\bf  R}_{i+3}}(T)
\end{equation}
  implies
\begin{equation}\label{pacc0*}
  \partial\sigma_{{\bf R}_i}(T)\subset \sigma_{gKR}(T)\subset \sigma_{gK}(T)\subset \sigma_{Kt}(T)\subset \sigma_{eK}(T)\subset\sigma_{K}(T)\subset \sigma_{{\bf R}_i}(T)
\end{equation}
and
\begin{equation*}
  \eta\sigma_{{\bf R}_i}(T)=\eta \sigma_{gKR}(T)=\eta \sigma_{gK}(T)=\eta \sigma_{Kt}(T)=\eta \sigma_{eK}(T)=\eta \sigma_{K}(T)=\eta\sigma_{{\bf g DR R}_i}(T).
\end{equation*}
\end{theorem}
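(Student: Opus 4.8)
The plan is to reduce everything to the single inclusion $\partial\sigma_{{\bf R}_i}(T)\subset\sigma_{gKR}(T)$; once that is available, the chains \eqref{part1}, \eqref{part2}, \eqref{pacc0*} and the hull equalities \eqref{pacc1} (together with their $1\le i\le 3$ analogues) follow mechanically from the containments recorded earlier and from Harte's implication \eqref{spec.1}. The crucial observation is that for a nonempty compact $K\subset\CC$ one always has $K=\inter K\cup\partial K$ with $\inter K\subset\acc K$, so that $\partial K\subset\acc K$ is equivalent to $K=\acc K$, i.e.\ to $\iso K=\emptyset$. Applying this with $K=\sigma_{{\bf R}_i}(T)$, hypothesis \eqref{bon} becomes $\acc\,\sigma_{{\bf R}_i}(T)=\sigma_{{\bf R}_i}(T)$; in particular $\partial\sigma_{{\bf R}_i}(T)\cap\acc\,\sigma_{{\bf R}_i}(T)=\partial\sigma_{{\bf R}_i}(T)$, and feeding this into \eqref{poss-0} of Theorem \ref{partial} yields $\partial\sigma_{{\bf R}_i}(T)\subset\sigma_{gKR}(T)$ for $4\le i\le 12$. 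For $1\le i\le 3$ the same conclusion is obtained from \eqref{ppos1}, \eqref{ppos2}, \eqref{ppos3} of Theorem \ref{partial} under hypothesis \eqref{pariz}, since there $\partial\sigma_{{\bf R}_i}(T)\cap\acc\,\sigma_{{\bf R}_{i+3}}(T)=\partial\sigma_{{\bf R}_i}(T)$.

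Next I would splice this inclusion into the standard chain $\sigma_{gKR}(T)\subset\sigma_{gK}(T)\subset\sigma_{Kt}(T)\subset\sigma_{eK}(T)\subset\sigma_K(T)$. For $4\le i\le 12$ every member of ${\bf R}_i$ is semi-Fredholm, hence essentially Kato by \cite[Theorem 16.21]{Mu}, so $\sigma_{eK}(T)\subset\sigma_{{\bf R}_i}(T)$; combined with the first step this is exactly \eqref{part1}. For \eqref{part2} I would use the identity $\sigma_{{\bf g DR R}_i}(T)=\sigma_{gKR}(T)\cup\acc\,\sigma_{{\bf R}_i}(T)$, which is \eqref{glava} when $7\le i\le 12$ and follows from \eqref{n0}, \eqref{n1}, \eqref{poziv} together with Theorems \ref{t1}, \ref{t2}, \ref{t3} when $i=4,5,6$; since the hypothesis forces $\acc\,\sigma_{{\bf R}_i}(T)=\sigma_{{\bf R}_i}(T)\supset\sigma_{gKR}(T)$, it gives $\sigma_{{\bf g DR R}_i}(T)=\sigma_{{\bf R}_i}(T)$, and \eqref{part1} becomes \eqref{part2}. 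For $1\le i\le 3$ one argues identically, observing in addition that a bounded below (resp.\ surjective, invertible) operator is Kato, so that $\sigma_K(T)\subset\sigma_{{\bf R}_i}(T)$; this yields \eqref{pacc0*}.

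The hull identities then come for free. Each of $\sigma_{gKR}(T)$, $\sigma_{gK}(T)$, $\sigma_{Kt}(T)$, $\sigma_{eK}(T)$ (and, for $1\le i\le 3$, also $\sigma_K(T)$), as well as $\sigma_{{\bf g DR R}_i}(T)$, lies between $\partial\sigma_{{\bf R}_i}(T)$ and $\sigma_{{\bf R}_i}(T)$: the lower bound holds because every one of them contains $\sigma_{gKR}(T)\supset\partial\sigma_{{\bf R}_i}(T)$, and the upper bound comes from the containments of the previous paragraph, using Proposition \ref{cor1}(i) for $\sigma_{{\bf g DR R}_i}(T)$. All of these sets are compact, so \eqref{spec.1} applied with $H=\sigma_{{\bf R}_i}(T)$ forces all of their connected hulls to coincide with $\eta\sigma_{{\bf R}_i}(T)$, which is \eqref{pacc1} and its $1\le i\le 3$ counterpart.

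I do not expect a genuine obstacle; the only content beyond routine inclusion-chasing is the reinterpretation of \eqref{bon} (and \eqref{pariz}) as a ``no isolated point'' condition together with the single application of Theorem \ref{partial}. The point that needs care is the bookkeeping over the twelve classes — in particular checking the identity $\sigma_{{\bf g DR R}_i}(T)=\sigma_{gKR}(T)\cup\acc\,\sigma_{{\bf R}_i}(T)$ uniformly for $4\le i\le 12$, and the right-hand containments $\sigma_{eK}(T)\subset\sigma_{{\bf R}_i}(T)$ ($4\le i\le 12$) and $\sigma_K(T)\subset\sigma_{{\bf R}_i}(T)$ ($1\le i\le 3$) that close up the chains.
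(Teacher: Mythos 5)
Your proposal is correct and follows essentially the same route as the paper: the hypothesis \eqref{bon} (resp.\ \eqref{pariz}) turns the intersection in \eqref{poss-0} (resp.\ \eqref{ppos1}--\eqref{ppos3}) into all of $\partial\sigma_{{\bf R}_i}(T)$, giving $\partial\sigma_{{\bf R}_i}(T)\subset\sigma_{gKR}(T)$, after which the chains follow from the standing inclusions and the hull equalities from \eqref{spec.1}. Your extra observations — that \eqref{bon} forces $\acc\,\sigma_{{\bf R}_i}(T)=\sigma_{{\bf R}_i}(T)$ and hence $\sigma_{{\bf g DR R}_i}(T)=\sigma_{{\bf R}_i}(T)$, which is what justifies the inclusion $\sigma_{gK}(T)\subset\sigma_{{\bf g DR R}_i}(T)$ in \eqref{part2} — are details the paper leaves implicit, not a different method.
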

\begin{proof}
From $\partial\sigma_{{\bf R}_i}(T)\subset\acc\, \sigma_{{\bf R}_i}(T)$ it follows that $\partial\sigma_{{\bf R}_i}(T)\cap\acc\, \sigma_{{\bf R}_i}(T)=\partial\sigma_{{\bf R}_i}(T)$, and so  from (\ref{poss-0}) it follows that $\partial\sigma_{{\bf R}_i}(T)\subset \sigma_{gKR}(T)$. 
(\ref{pacc1}) follows from (\ref{part1}),  (\ref{part2}) and \eqref{spec.1}.


For
$1\le i\le 3$, remark that $\sigma_{K}(T)\subset \sigma_{{\bf R}_i}(T)$
 and we can proceed analogously as above.
\end{proof}

The Goldberg spectrum of $T\in L(X)$ is defined by
$$\sigma_{ec}(T)=\{\lambda\in\CC:R(T-\lambda)\ {\rm is\ not\ closed}\}.
$$
Obviously, $\sigma_{ec}(T)\subset\sigma_{R_i}(T)$ for all
$i\in\{1,\dots,12\}$.
\begin{theorem}\label{Ben}  Let $T\in L(X)$ and $4\le i\le 12$. If
\begin{equation*}
\sigma_{{\bf R}_i}(T)=\partial\sigma_{{\bf R}_i}(T)=\acc\,
\sigma_{{\bf R}_i}(T),
\end{equation*}
 then
 \begin{equation*}
\sigma_{ec}(T)\subset\sigma_{gKR}(T)=
\sigma_{gK}(T)=\sigma_{Kt}(T)=\sigma_{eK}(T)=\sigma_{{\bf
R}_i}(T)=\sigma_{{\bf g DR R}_i}(T).
\end{equation*}
For $1\le i\le 3$,
\begin{equation*}
  \sigma_{{\bf R}_i}(T)=\partial\sigma_{{\bf R}_i}(T)\subset\acc\,
\sigma_{{\bf R}_{i+3}}(T),
\end{equation*}
 implies
\begin{equation*}
\sigma_{ec}(T)\subset\sigma_{gKR}(T)=
\sigma_{gK}(T)=\sigma_{Kt}(T)=\sigma_{eK}(T)=\sigma_{K}(T)=\sigma_{{\bf
R}_i}(T)=\sigma_{{\bf g DR R}_i}(T).
\end{equation*}
\end{theorem}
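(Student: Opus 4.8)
The plan is to derive Theorem \ref{Ben} from Theorem \ref{nov} by a squeezing argument: once a nested chain of sets has its two extreme terms equal, every intermediate term is forced to coincide with them.

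First I would unpack the hypothesis. The assumption $\sigma_{{\bf R}_i}(T)=\partial\sigma_{{\bf R}_i}(T)$ simply says that $\sigma_{{\bf R}_i}(T)$ has empty interior, and when it is combined, in the case $4\le i\le 12$, with $\sigma_{{\bf R}_i}(T)=\acc\,\sigma_{{\bf R}_i}(T)$, it yields $\partial\sigma_{{\bf R}_i}(T)=\sigma_{{\bf R}_i}(T)=\acc\,\sigma_{{\bf R}_i}(T)$; in particular $\partial\sigma_{{\bf R}_i}(T)\subset\acc\,\sigma_{{\bf R}_i}(T)$, which is exactly hypothesis \eqref{bon}. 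Hence Theorem \ref{nov} applies and furnishes the chains \eqref{part1} and \eqref{part2}. Now in \eqref{part1} the leftmost term is $\partial\sigma_{{\bf R}_i}(T)$ and the rightmost is $\sigma_{{\bf R}_i}(T)$, and these are equal by hypothesis; therefore $\sigma_{gKR}(T)=\sigma_{gK}(T)=\sigma_{Kt}(T)=\sigma_{eK}(T)=\sigma_{{\bf R}_i}(T)$, and the same squeezing in \eqref{part2} adds $\sigma_{{\bf g DR R}_i}(T)=\sigma_{{\bf R}_i}(T)$. Finally, the inclusion $\sigma_{ec}(T)\subset\sigma_{{\bf R}_i}(T)$ recorded just before the theorem, together with $\sigma_{{\bf R}_i}(T)=\sigma_{gKR}(T)$, gives $\sigma_{ec}(T)\subset\sigma_{gKR}(T)$. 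This settles the case $4\le i\le 12$.

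For $1\le i\le 3$ the argument is the same, with one bookkeeping change: the hypothesis $\sigma_{{\bf R}_i}(T)=\partial\sigma_{{\bf R}_i}(T)\subset\acc\,\sigma_{{\bf R}_{i+3}}(T)$ contains precisely condition \eqref{pariz}, so Theorem \ref{nov} now delivers the chain \eqref{pacc0*}, namely $\partial\sigma_{{\bf R}_i}(T)\subset\sigma_{gKR}(T)\subset\sigma_{gK}(T)\subset\sigma_{Kt}(T)\subset\sigma_{eK}(T)\subset\sigma_{K}(T)\subset\sigma_{{\bf R}_i}(T)$. Squeezing between the equal endpoints $\partial\sigma_{{\bf R}_i}(T)$ and $\sigma_{{\bf R}_i}(T)$ forces all the listed equalities, including the one with $\sigma_{K}(T)$; to reach $\sigma_{{\bf g DR R}_i}(T)$ I would invoke the inclusions \eqref{inkluzija}, \eqref{inkluzija-} and Proposition \ref{cor1}{\rm (i)}, which give $\sigma_{gKR}(T)\subset\sigma_{{\bf g DR R}_i}(T)\subset\sigma_{{\bf R}_i}(T)$, so $\sigma_{{\bf g DR R}_i}(T)$ is squeezed to $\sigma_{{\bf R}_i}(T)$ as well; and as before $\sigma_{ec}(T)\subset\sigma_{{\bf R}_i}(T)=\sigma_{gKR}(T)$.

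Since every inclusion invoked is already proved in the excerpt, there is no real technical obstacle here. The only point deserving attention is the correct matching of hypotheses to Theorem \ref{nov} — in particular that for $1\le i\le 3$ the role played by ${\bf R}_i$ on the right-hand side of \eqref{bon} is taken over by ${\bf R}_{i+3}$ — and making sure that $\sigma_{{\bf g DR R}_i}(T)$ and $\sigma_{ec}(T)$ are threaded into the chains so that the squeezing actually captures them.
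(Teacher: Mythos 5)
Your proposal is correct and follows essentially the same route as the paper: the paper also invokes Theorem \ref{nov} to get the chains \eqref{part1}, \eqref{part2} (resp.\ \eqref{pacc0*} together with $\sigma_{gKR}(T)\subset\sigma_{{\bf gDRR}_i}(T)\subset\sigma_{{\bf R}_i}(T)$) and squeezes between the equal endpoints $\partial\sigma_{{\bf R}_i}(T)=\sigma_{{\bf R}_i}(T)$, finishing with $\sigma_{ec}(T)\subset\sigma_{{\bf R}_i}(T)$. The only difference is presentational: the paper writes out the case $i=1$ explicitly and declares the rest similar, whereas you argue uniformly in $i$.
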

\begin{proof} Suppose that $\sigma_{ap}(T)=\partial\sigma_{ap}(T)$ and that every $\lambda\in  \sigma_{ap}(T)$ is an accumulation point of  $\sigma_{\B_+}(T)$.
From Theorem \ref{nov} it follows that
\begin{eqnarray*}
  \sigma_{ap}(T)=\partial\sigma_{ap}(T)\subset\sigma_{gKR}(T)\subset \sigma_{gK}(T)\subset \sigma_{Kt}(T)\subset \sigma_{eK}(T)\subset\sigma_{K}(T)\subset \sigma_{ap}(T),\\
  \sigma_{ap}(T)=\partial\sigma_{ap}(T)\subset \sigma_{gKR}(T)\subset \sigma_{\bf g DR \M}(T)\subset \sigma_{ap}(T),
\end{eqnarray*}
and so
$
\sigma_{ec}(T)\subset\sigma_{ap}(T)=\sigma_{gKR}(T)=\sigma_{gK}(T)=\sigma_{Kt}(T)=\sigma_{eK}(T)=\sigma_{K}(T)=
\sigma_{\bf g DR \M}(T).
$

The other cases ($i=2,\dots,12$) can be proved similarly.
\end{proof}

We remark that if $K\subset\CC$ is compact, then
for $\lambda\in\partial K$  there is equivalence:
\begin{equation}\label{zz}
 \lambda\in \acc\, K\Longleftrightarrow \lambda\in \acc\, \partial K.
\end{equation}

\begin{theorem} \label{PO} Let $T\in L(X)$ be an operator for which $\sigma_{ap}(T)=\partial\sigma (T)$ and every $\lambda\in \partial\sigma (T)$ is not isolated in $\sigma (T)$. Then
$\sigma_{ec}(T)\subset\sigma_{ap}(T)=\sigma_{gKR}(T)=\sigma_{gK}(T)=\sigma_{Kt}(T)=\sigma_{eK}(T)=\sigma_{K}(T)=\sigma_{\bf g DR \M}(T)
$.
\end{theorem}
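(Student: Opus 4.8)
The plan is to reduce this statement to an application of Theorem \ref{Ben} with $i=1$, i.e. with ${\bf R}_1=\M(X)$. For that I must verify the two hypotheses of the first part of Theorem \ref{Ben} for $\sigma_{{\bf R}_1}(T)=\sigma_{ap}(T)$, namely that $\sigma_{ap}(T)=\partial\sigma_{ap}(T)$ and that $\partial\sigma_{ap}(T)\subset\acc\,\sigma_{{\bf R}_4}(T)=\acc\,\sigma_{\B_+}(T)$. The first is essentially immediate: by hypothesis $\sigma_{ap}(T)=\partial\sigma(T)$, and since $\partial\sigma(T)$ is contained in $\sigma_{ap}(T)$ for any operator (the topological boundary of the spectrum always lies in the approximate point spectrum), the set $\sigma_{ap}(T)=\partial\sigma(T)$ has empty interior, hence equals its own boundary; so $\sigma_{ap}(T)=\partial\sigma_{ap}(T)=\partial\sigma(T)$.

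The main work is the inclusion $\partial\sigma_{ap}(T)\subset\acc\,\sigma_{\B_+}(T)$. First I would observe that $\sigma_{ap}(T)=\partial\sigma(T)$ forces $\sigma_{ap}(T)=\sigma(T)$ in the relevant sense needed here — more precisely, every point of $\partial\sigma(T)$ lies in $\sigma(T)$ and, by hypothesis, is not isolated in $\sigma(T)$, so $\partial\sigma(T)\subset\acc\,\sigma(T)$. Now fix $\lambda\in\partial\sigma_{ap}(T)=\partial\sigma(T)$. Since $\lambda\in\acc\,\sigma(T)$ and $\sigma(T)=\sigma_{ap}(T)=\partial\sigma(T)$, we get $\lambda\in\acc\,\partial\sigma(T)$, and by the equivalence \eqref{zz} (applied to the compact set $K=\sigma(T)$) this is the same as $\lambda\in\acc\,\sigma(T)$. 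It remains to upgrade ``$\lambda$ is an accumulation point of $\sigma(T)$'' to ``$\lambda$ is an accumulation point of $\sigma_{\B_+}(T)$''. Here I would argue that near such a boundary-accumulation point the spectrum cannot consist only of points outside $\sigma_{\B_+}(T)$: a point $\mu\in\sigma(T)\setminus\sigma_{\B_+}(T)$ is a point where $T-\mu$ is upper semi-Browder, hence (being in the spectrum) an isolated point of $\sigma_{ap}(T)$ with the standard punctured-disc property; but a whole sequence of such points converging to $\lambda$ would force $\lambda\notin\acc\,\sigma_{ap}(T)$ or contradict $\sigma_{ap}(T)=\partial\sigma(T)$ having every point non-isolated. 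Chasing this gives that arbitrarily close to $\lambda$ there are points of $\sigma_{\B_+}(T)$, i.e. $\lambda\in\acc\,\sigma_{\B_+}(T)$.

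With both hypotheses of Theorem \ref{Ben} (case $1\le i\le 3$, $i=1$) verified, that theorem yields directly
$$
\sigma_{ec}(T)\subset\sigma_{gKR}(T)=\sigma_{gK}(T)=\sigma_{Kt}(T)=\sigma_{eK}(T)=\sigma_{K}(T)=\sigma_{{\bf R}_1}(T)=\sigma_{{\bf g DR R}_1}(T),
$$
and since ${\bf R}_1=\M(X)$ we have $\sigma_{{\bf R}_1}(T)=\sigma_{ap}(T)$ and $\sigma_{{\bf g DR R}_1}(T)=\sigma_{\bf g DR \M}(T)$, which is exactly the asserted chain of equalities. Alternatively, if one prefers to avoid reverifying the $\acc$-hypothesis, one can invoke the last displayed chain in the proof of Theorem \ref{Ben} itself, which already treats precisely the situation $\sigma_{ap}(T)=\partial\sigma_{ap}(T)$ with every point of $\sigma_{ap}(T)$ non-isolated in $\sigma_{\B_+}(T)$; then the only gap to fill is the implication ``$\lambda\in\partial\sigma(T)$ not isolated in $\sigma(T)$'' $\Rightarrow$ ``$\lambda$ not isolated in $\sigma_{\B_+}(T)$'', handled as above.

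I expect the genuine obstacle to be this last implication — translating non-isolation in $\sigma(T)$ into non-isolation in the upper semi-Browder spectrum. The other ingredients ($\partial\sigma(T)\subset\sigma_{ap}(T)$, the emptiness of $\mathrm{int}\,\sigma_{ap}(T)$, and the equivalence \eqref{zz}) are routine. The crux is to rule out that every spectral point in a punctured neighbourhood of $\lambda$ is upper semi-Browder; this is where one uses that such points would be isolated points of $\sigma_{ap}(T)$, together with the hypothesis that no point of $\partial\sigma(T)=\sigma_{ap}(T)$ is isolated in $\sigma(T)$, to derive a contradiction.
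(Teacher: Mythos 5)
Your proposal is correct and follows essentially the same route as the paper: show $\sigma_{ap}(T)=\partial\sigma_{ap}(T)$, use \eqref{zz} to get $\sigma_{ap}(T)\subset\acc\,\sigma_{ap}(T)$, upgrade this to accumulation in $\sigma_{\B_+}(T)$, and feed the result into Theorem \ref{Ben} with $i=1$. The step you identify as the crux is precisely the standard inclusion $\acc\,\sigma_{ap}(T)\subset\sigma_{\B_+}(T)\subset\sigma_{ap}(T)$ (\cite[Corollary 20.20]{Mu}), which the paper cites directly and which immediately gives $\sigma_{ap}(T)=\sigma_{\B_+}(T)=\acc\,\sigma_{\B_+}(T)$, so your contradiction argument (correct once one notes that the relevant points $\mu$ lie in $\sigma_{ap}(T)\setminus\sigma_{\B_+}(T)$, not merely in $\sigma(T)\setminus\sigma_{\B_+}(T)$) is just the contrapositive of that corollary.
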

\begin{proof}
From $\sigma_{ap}(T)=\partial\sigma (T)$, since $\partial\sigma(T)\subset\partial\sigma_{ap}(T)\subset\sigma_{ap}(T)$   it follows
that $\sigma_{ap}(T)=\partial\sigma_{ap}(T)$, while from (\ref{zz})
it follows that every $\lambda\in \partial\sigma (T)$ is not
isolated in $\partial\sigma (T)$. Therefore, every $\lambda\in \partial\sigma (T)$ is not
isolated  in $\sigma_{ap}(T)$ and hence, $\sigma_{ap}(T)\subset\acc\, \sigma_{ap}(T)$. Since $\acc\, \sigma_{ap}(T)\subset\sigma_{\B_+}(T)\subset \sigma_{ap}(T)$ \cite[Corollary 20.20]{Mu}, we get that $\sigma_{ap}(T)=\acc\, \sigma_{ap}(T)=\sigma_{\B_+}(T)$. Hence $\acc\,  \sigma_{\B_+}(T)=\acc\, \sigma_{ap}(T)=\sigma_{ap}(T)=\partial\sigma_{ap}(T)$. Now from
Theorem \ref{Ben}  we get $\sigma_{ec}(T)\subset\sigma_{ap}(T)=\sigma_{gKR}(T)=\sigma_{gK}(T)=\sigma_{Kt}(T)=\sigma_{eK}(T)=\sigma_{K}(T)=\sigma_{g DR \M}(T)$.
\end{proof}
\begin{theorem}\label{kl} Let $T\in L(X)$ be an operator for which $\sigma_{su}(T)=\partial\sigma (T)$ and every $\lambda\in \partial\sigma (T)$ is not isolated in $\sigma (T)$. Then
$\sigma_{ec}(T)\subset\sigma_{su}(T)=\sigma_{gKR}(T)=\sigma_{gK}(T)=\sigma_{Kt}(T)=\sigma_{eK}(T)=\sigma_{K}(T)=\sigma_{\bf g DR \Q}(T).$
\end{theorem}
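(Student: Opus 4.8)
The plan is to mirror the proof of Theorem \ref{PO}, replacing the approximate point spectrum by the surjective spectrum and the class $\M(X)$ by $\Q(X)$, and invoking the dual versions of the ingredients. First I would observe that $\partial\sigma(T)\subset\partial\sigma_{su}(T)\subset\sigma_{su}(T)$, so the hypothesis $\sigma_{su}(T)=\partial\sigma(T)$ forces $\sigma_{su}(T)=\partial\sigma_{su}(T)$. Next, using the equivalence \eqref{zz} applied to the compact set $\sigma(T)$, the hypothesis that no $\lambda\in\partial\sigma(T)$ is isolated in $\sigma(T)$ translates into: no $\lambda\in\partial\sigma(T)$ is isolated in $\partial\sigma(T)$, hence no $\lambda\in\sigma_{su}(T)$ is isolated in $\sigma_{su}(T)$, i.e. $\sigma_{su}(T)\subset\acc\,\sigma_{su}(T)$.

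Then I would bring in the dual of \cite[Corollary 20.20]{Mu}, namely $\acc\,\sigma_{su}(T)\subset\sigma_{\B_-}(T)\subset\sigma_{su}(T)$, to conclude $\sigma_{su}(T)=\acc\,\sigma_{su}(T)=\sigma_{\B_-}(T)$, and therefore $\acc\,\sigma_{\B_-}(T)=\acc\,\sigma_{su}(T)=\sigma_{su}(T)=\partial\sigma_{su}(T)$. At this point the hypotheses of Theorem \ref{Ben} are met for the index $i=2$ (i.e.\ ${\bf R}_2=\Q(X)$, ${\bf R}_{5}=\B_-(X)$): we have $\sigma_{su}(T)=\partial\sigma_{su}(T)\subset\acc\,\sigma_{\B_-}(T)$. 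Applying Theorem \ref{Ben} in that case yields
\[
\sigma_{ec}(T)\subset\sigma_{gKR}(T)=\sigma_{gK}(T)=\sigma_{Kt}(T)=\sigma_{eK}(T)=\sigma_{K}(T)=\sigma_{su}(T)=\sigma_{\bf gDR\Q}(T),
\]
which is exactly the claim. The two places where care is needed are the passage from $\sigma(T)$ to $\sigma_{su}(T)$ via \eqref{zz} (one must check that $\partial\sigma_{su}(T)=\partial\sigma(T)$ under the hypothesis, so that ``not isolated in $\sigma(T)$'' really does transfer), and the correct bookkeeping of which index $i$ and which auxiliary spectrum ${\bf R}_{i+3}$ are used when invoking Theorem \ref{Ben}; both are routine once the dictionary $(\sigma_{ap},\M,\B_+)\leftrightarrow(\sigma_{su},\Q,\B_-)$ is fixed. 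I do not anticipate a genuine obstacle: the proof of Theorem \ref{PO} goes through verbatim after this substitution, since Theorems \ref{t2}, \ref{t4}, \ref{nov} and \ref{Ben} were all stated in a form symmetric under duality. The only external fact beyond the excerpt is the dual inclusion $\acc\,\sigma_{su}(T)\subset\sigma_{\B_-}(T)$, which is the standard lower-semi-Browder analogue of \cite[Corollary 20.20]{Mu} and may simply be cited alongside it.
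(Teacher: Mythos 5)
Your proof is correct and is essentially the paper's own argument: the paper proves Theorem \ref{kl} by citing the inclusions $\partial\sigma(T)\subset\partial\sigma_{su}(T)\subset\sigma_{su}(T)$, the equivalence \eqref{zz}, and Theorem \ref{Ben}, ``analogously to the proof of Theorem \ref{PO}'' — exactly the dualization you carry out, including the use of $\acc\,\sigma_{su}(T)\subset\sigma_{\B_-}(T)\subset\sigma_{su}(T)$ in place of \cite[Corollary 20.20]{Mu} and the application of Theorem \ref{Ben} with $i=2$.
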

\begin{proof}
Follows from the inclusions $\partial\sigma(T)\subset\partial\sigma_{su}(T)\subset\sigma_{su}(T)$, (\ref{zz})  and  Theorem \ref{Ben},
analogously to the proof of Theorem \ref{PO}.
\end{proof}

\begin{example}
{\em If $X$ is one of $c_{0}(\ZZ)$ and $\ell_{p}(\ZZ)$, $p\ge 1$,
then for the forward and backward bilateral shifts $W_1,\ W_2\in
L(X)$ there are equalities
\begin{eqnarray}
 \sigma_{ap}(W_1)=\sigma_{su}(W_1)=\sigma (W_1)=\partial\DD,\label{za1}
 \\\sigma_{ap}(W_2)=\sigma_{su}(W_2)=\sigma (W_2)=\partial\DD.\label{za2}
\end{eqnarray}
For every $i=1,\dots ,12$, from Theorem \ref{Ben} (or Theorems \ref{PO} and \ref{kl}), \eqref{za1},
\eqref{za2} and \eqref{inkluzija} it follows  that
\begin{eqnarray*}
 &\sigma_{gKR}(W_1)=
 \sigma_{{\bf g DR R}_i}(W_1)=\sigma_{g DR }(W_1)=
 \sigma (W_1)=\partial\DD,
   \\
  &\sigma_{gKR}(W_2)=
 \sigma_{{\bf g D R}_i}(W_2)=\sigma_{g DR }(W_2)=\sigma (W_2)=\partial\DD.
\end{eqnarray*}
It  follows also from Corollary \ref{r=} and the inclusions in 
\eqref{inkluzija-}. }
\end{example}
\begin{example}{\em For  each  $X\in \{  c_0(\NN), c(\NN), \ell_{\infty}(\NN),
\ell_p(\NN)\}$, $p\ge 1$, and the forward and backward unilateral shifts
$U$, $V\in L(X)$ there are equalities $\sigma(U)=\sigma(V)=\DD$,  $
\sigma_{ap}(U)=\sigma_{su}(V)=\partial\DD,$ and hence, $
\sigma_{ap}(U)=\partial \sigma(U)\subset \acc\, \sigma(U)$ and
$\sigma_{su}(V)=\partial\sigma(V)\subset \acc\, \sigma(V)$. From
Theorems  \ref{PO} and \ref{kl}, and \eqref{inkluzija} we get
$  \sigma_{gKR}(U)=  \sigma_{gK}(U)=\sigma_{K}(U)=\sigma_{ap}(U)=\sigma_{\bf g DR\M}(U)=\sigma_{\bf g DR\W_+}(U)=\sigma_{\bf g DR\Phi_+}(U)=\partial\DD$ and
   $\sigma_{gKR}(V)=  \sigma_{gK}(V)=\sigma_{K}(V)=\sigma_{su}(V)=\sigma_{\bf g DR\Q}(V)=\sigma_{\bf g DR\W_-}(V)=\sigma_{\bf g DR\Phi_-}(V)=\partial\DD$.

Remark that the forward  unilateral shift
$U$ is non-invertible isometry.
In \cite{aienarosas}, p. 187,  it is noticed that every  non-invertible isometry $T$ has the property that $\sigma(T)=\overline{D(0,r(T))}$ and $\sigma_{ap}(T)=\partial D(0,r(T))$, and hence   $\sigma_{ap}(T)=\partial\sigma (T)$ and every $\lambda\in \partial\sigma (T)$ is not isolated in $\sigma (T)$. Therefore, according to Theorem \ref{PO}, for arbitrary non-invertible isometry $T$ we get that
$  \sigma_{gKR}(T)=\sigma_{\bf g DR\Phi_+}(T)=\sigma_{\bf g DR\W_+}(T)=\sigma_{\bf g DR \M}(T)=\sigma_{ap}(T)= \partial D(0,r(T))$.
}
\end{example}

\begin{example}{\em For the $Ces\acute{a}ro\ operator$ $C_p$ defined on the classical Hardy space $H_p(\DDD)$, $\DDD$ the open unit disc and $1<p<\infty$, by
$$
 (C_pf)(\lambda)=\ds\frac 1{\lambda}\int_0^{\lambda}\ds\frac{f(\mu)}{1-\mu}\, d\mu,\ \, {\rm for\ all\ }f\in H_p(\DDD)\ {\rm and\ }\lambda\in\DDD,
 $$
it is known that its spectrum is  the closed disc $\Gamma_p$ centered at $p/2$ with radius $p/2$ and $\sigma_{Kt}(C_p)=\sigma_{ap}(C_p)=\partial \Gamma_p$ \cite{Mill}, \cite{aienarosas}.
According to Theorem \ref{PO} we get that $\sigma_{gKR}(C_p)=\sigma_{gK}(C_p)=\sigma_{ap}(C_p)=\partial \Gamma_p$.

}
\end{example}

\medskip \noindent
\author{Sne\v zana
\v C. \v Zivkovi\'c-Zlatanovi\'c}

\noindent{University of Ni\v s\\
Faculty of Sciences and Mathematics\\
P.O. Box 224, 18000 Ni\v s, Serbia}

\noindent {\it E-mail}: {\tt mladvlad@mts.rs}

\bigskip
\noindent
\author{Milo\v s D. Cvetkovi\'c}

\noindent University of Ni\v s\\
Faculty of Sciences and Mathematics\\
P.O. Box 224, 18000 Ni\v s, Serbia

\noindent {\it E-mail}: {\tt miloscvetkovic83@gmail.com}

\end{document}